\DeclareMathAlphabet{\mathpzc}{OT1}{pzc}{m}{it}
\newlist{cdesc}{description}{1}
\setlist[cdesc]{font=\mdseries,itemsep=0.5pt}
\newcommand{\figdir}{figures/}
\newcommand{\Covariate}{\ensuremath{X}}
\newcommand{\Response}{\ensuremath{Y}}
\newcommand{\Xspace}{\ensuremath{\mathcal{X}}}
\newcommand{\Wspace}{\ensuremath{\mathcal{W}}}
\newcommand{\ERIC}{\mbox{\texttt{PAST}}}
\renewcommand{\numobs}{\tnumobs}
\newcommand{\tarplain}{\ensuremath{f}}
\newcommand{\tarstar}{\ensuremath{{\tarplain^{*}}}}
\newcommand{\tarhat}{\ensuremath{\widehat{\tarplain}}}
\newcommand{\TarClass}{\ensuremath{\mathcal{F}}}
\newcommand{\auxplain}{\ensuremath{g}}
\newcommand{\auxstar}{\ensuremath{{\auxplain^{*}}}}
\newcommand{\auxhat}{\ensuremath{\widetilde{\auxplain}}}
\newcommand{\Surrogate}{\ensuremath{W}}
\newcommand{\surrogate}{\ensuremath{w}}
\newcommand{\Dataset}{\dataset}
\renewcommand{\Response}{\ensuremath{Y}}
\newcommand{\Yspace}{\ensuremath{\mathcal{Y}}}
\newcommand{\ResponseTil}{\ensuremath{\widetilde{\Response}}}
\newcommand{\DatasetTil}{\ensuremath{\widetilde{\Dataset}}}
\newcommand{\LabelSet}{\ensuremath{\Dataset_{\scaleto{\mathrm{L}}{5pt}}}}
\newcommand{\UnlabelSet}{\ensuremath{\Dataset_{\scaleto{\mathrm{U}}{5pt}}}}
\newcommand{\genfun}{\ensuremath{f}}
\newcommand{\CompFun}{\ensuremath{H}}
\newcommand{\myrad}{\ensuremath{t}}
\newcommand{\totempnorm}[1]{\ensuremath{\|#1\|}_{\numobs}}
\newcommand{\TarClassStar}{\ensuremath{\TarClass^*}}
\newcommand{\Pemp}{\ensuremath{\Prob}}
\newcommand{\PempTot}{\ensuremath{\Pemp_\numobs}}
\newcommand{\PempLab}{\ensuremath{\Pemp_\lnumobs}}
\newcommand{\PempUnlab}{\ensuremath{\Pemp_\unumobs}}
\newcommand{\EmpNormUnlab}[1]{\ensuremath{\|#1\|_\unumobs}}
\newcommand{\Logfun}{\ensuremath{\varphi}}
\newcommand{\uradcrit}{\ensuremath{r_\unumobs}}
\newcommand{\lradcrit}{\ensuremath{r_\lnumobs}}
\newcommand{\VarFun}[1]{\ensuremath{\sigma^2(#1)}}
\newcommand{\NewEmp}{\ensuremath{V}}
\newcommand{\HighOrder}{\ensuremath{\tau}}
\newcommand{\revdefn}{\ensuremath{= \, \colon}}
\newcommand{\yhat}{\ensuremath{\widehat{y}}}
\newcommand{\Ytil}{\ensuremath{\widetilde{Y}}}
\newcommand{\Proc}{\ensuremath{\mathcal{P}}}
\newcommand{\Loss}{\loss}
\newcommand{\Tarclass}{\TarClass}
\newcommand{\Fclass}{\TarClass}
\newcommand{\Gclass}{\mathcal{G}}
\newcommand{\Lip}{\ensuremath{L}}
\newcommand{\scparam}{\ensuremath{\gamma}}
\newcommand{\hstar}{\ensuremath{h^*}}
\newcommand{\Labelset}{\LabelSet}
\newcommand{\Unlabelset}{\UnlabelSet}
\newcommand{\yzero}{y_0}
\newcommand{\yone}{y_1}
\newcommand{\yhatzero}{\yhat_0}
\newcommand{\yhatone}{\yhat_1}
\newcommand{\tardagger}{\ensuremath{f^\dagger}}
\newcommand{\sigbound}{\ensuremath{\sigma}}
\newcommand{\FullDataset}{\ensuremath{\widetilde{\Dataset}}}
\newcommand{\LossDiff}{\ensuremath{\mathcal{D}}}
\newcommand{\gammahat}{\widehat{\gamma}}
\newcommand{\kdim}{\ensuremath{k}}
\newcommand{\usedim}{\ensuremath{d}}
\newcommand{\gnoise}{\ensuremath{\varepsilon}}
\newcommand{\Normal}{\ensuremath{\mathcal{N}}}
\newcommand{\usedima}{\ensuremath{{\usedim_1}}}
\newcommand{\usedimb}{\ensuremath{{\usedim_2}}}
\newcommand{\CovMat}{\ensuremath{\mathbf{\Sigma}}}
\newcommand{\alphastar}{\ensuremath{\alpha^*}}
\newcommand{\Feat}{\ensuremath{\Psi}}
\newcommand{\GLMbias}{\|\widetilde{h} - \hstar \|_2}
\newcommand{\htil}{\widetilde{h}}
\newcommand{\auxfun}{g}
\newcommand{\Wprob}{\smallsub{h}{W}}
\newcommand{\Zprob}{\smallsub{h}{Z}}
\newcommand{\Yprob}{\smallsub{h}{Y}}
\newcommand{\HardY}{\smallsub{\ResponseTil}{\mbox{hard}}}
\newcommand{\SoftY}{\smallsub{\ResponseTil}{\mbox{soft}}}
\newcommand{\sigmoid}{\ensuremath{\varphi}}
\newcommand{\fsoft}{\ensuremath{\smallsub{\ftil}{\mbox{soft}}}}
\newcommand{\fhard}{\ensuremath{\smallsub{\ftil}{\mbox{hard}}}}
\newcommand{\tnow}{\ensuremath{\tinysub{t}{\mbox{now}}}}
\newcommand{\tfuture}{\ensuremath{\smallsub{t}{\mbox{fut}}}}
\newcommand{\tinter}{\ensuremath{\smallsub{t}{\mbox{int}}}}
\newcommand{\Zcovv}[2]{\ensuremath{Z_{#2}^{#1}}}
\newcommand{\Zcov}[1]{\ensuremath{Z^{#1}}}
\newcommand{\Yhat}{\ensuremath{\widehat{Y}}}
\begin{document}


\begin{center}

  {\bf{\LARGE{Prediction Aided by Surrogate Training}}}
\vspace*{.2in}

{\large{
\begin{tabular}{ccc}
Eric Xia$^{\dagger}$ & Martin
J. Wainwright$^{\dagger, \ddagger}$
\end{tabular}
}}

\vspace*{.2in}

\begin{tabular}{c}
  Laboratory for Information and Decision Systems \\ Statistics and
  Data Science Center \\ Electrical Engineering \& Computer
  Sciences$^\dagger$, and Mathematics$^\ddagger$ \\
  Massachusetts Institute of Technology, Cambridge, MA
\end{tabular}

\medskip

\today

\vspace*{.2in}

\begin{abstract}
We study a class of prediction problems in which relatively few
observations have associated responses, but all observations include
both standard covariates as well as additional ``helper'' covariates.
While the end goal is to make high-quality predictions using only the
standard covariates, helper covariates can be exploited during
training to improve prediction.  Helper covariates arise in many applications, including
forecasting in time series; incorporation of biased or mis-calibrated
predictions from foundation models; and sharing information in
transfer learning.  We propose ``prediction aided by surrogate
training'' (\texttt{PAST}), a class of methods that exploit labeled
data to construct a response estimator based on both the standard and
helper covariates; and then use the full dataset with pseudo-responses
to train a predictor based only on standard covariates.  We establish
guarantees on the prediction error of this procedure, with the
response estimator allowed to be constructed in an arbitrary way, and
the final predictor fit by empirical risk minimization over an
arbitrary function class.  These upper bounds involve the risk
associated with the oracle data set (all responses available), plus an
overhead that measures the accuracy of the pseudo-responses.  This
theory characterizes both regimes in which \texttt{PAST} accuracy is
comparable to the oracle accuracy, as well as more challenging regimes
where it behaves poorly.  We demonstrate its empirical performance
across a range of applications, including forecasting of societal ills
over time with future covariates as helpers; prediction of
cardiovascular risk after heart attacks with prescription data as
helpers; and diagnosing pneumonia from chest X-rays using
machine-generated predictions as helpers.
\end{abstract}

\end{center}


\section{Introduction}

In modern data science, it is often expensive and/or time-consuming to
collect labels or responses for solving a classification or regression
problem.  For instance, in medical settings, collecting labeled data
requires substantial time investment from doctors and other
experts~\cite{rajkomar2019machine}.  Similarly, in longitudinal
studies, participants may stop responding to follow-up
surveys~\cite{hogan2004handling}, which again leads to missing
responses.  In these settings, it is natural to consider datasets of a
hybrid type, partitioned into one collection of labeled responses, and
another unlabeled set.  Typically, the labeled set is substantially
smaller than the unlabeled set, for the reasons described above.  In
the machine learning (ML) literature, problems involving datasets of
this hybrid type are known as instances of semi-supervised learning
(e.g.,~\cite{chapelle2009semi}); in statistics, related problems have
have been formulated and tackled using the EM algorithm
(e.g.,~\cite{mclachlan2008algorithm, wu1983convergence}), as well as
various semiparametric techniques (e.g.,~\cite{robins1994estimation,
  robins1995semiparametric}). See~\Cref{SecRelated} for further
discussion of this and other related work.

The focus of this paper is a variant of the standard semi-supervised
set-up, distinguished by the availability of additional ``helper''
covariates.  In more explicit terms, the standard set-up is characterized by
a covariate $\Covariate \in \Xspace$, a scalar response $\Response
\in \Yspace$, with a the labeled dataset consisting of covariate-response
pairs $(\Covariate, \Response)$, and an unlabeled dataset
consisting of covariates $\Covariate$ alone.  In this paper, we augment
this set-up with the availability of an additional random vector
$\Surrogate \in \Wspace$---known as the \emph{helper} or \textit{surrogate covariate}---that
is collected for all samples.  Thus, the labeled dataset now consists
of triples $(\Covariate, \Surrogate, \Response)$ whereas the unlabeled
data set consists of the pairs $(\Covariate, \Surrogate)$.  As we
discuss in~\Cref{SecHelper}, there are a wide range of problems that
can be fruitfully cast in these terms.

Within this set-up, our goal is to estimate---typically over some
non-parametric class---a function $\covariate \mapsto
\fhat(\covariate)$ that is a good predictor of the response.  We
formalize the notion of ``goodness'' more precisely in terms of risk
minimization (cf. equation~\eqref{EqnPopulationRisk}).  In many cases,
the quality of $\fhat$ can be measured via its proximity to the
regression function \mbox{$\fstar(\covariate) \defn \Exs[\Response
    \mid \Covariate = \covariate]$.}  While our ultimate goal remains
making accurate predictions based \emph{only} on the original
covariate vector, the helper covariates can be exploited as part of
the training process.  In this paper, we propose and analyze a simple
third-stage procedure, one that integrates seamlessly with existing
pipelines for large-scale machine learning.  In the first stage, we
use the labeled triples $(\Covariate, \Surrogate, \Response)$ to
estimate a function $(\covariate, \surrogate) \mapsto
\auxhat(\covariate, \surrogate) \in \real$, then the second step uses that to generate
pseudo-responses $\Ytil$.  We then use the full dataset, augmented
with these pseudo-responses, to determine our final estimate $\fhat$,
which depends only on $\covariate$.

Our overall approach, which we refer to as ``prediction aided by
surrogate training'', or \ERIC~for short, should be understood as a
\emph{meta-procedure} since we allow a great deal of flexibility in
how the auxiliary function $\auxhat$ and final estimate $\fhat$ are
produced.  On the theoretical front, we provide explicit and
non-asymptotic bounds on the accuracy of $\fhat$ relative to the
optimal $\fstar$; see Theorems~\ref{thm:sqloss} and~\ref{thm:main}.
These bounds make a number of qualitative predictions about the
factors that control the accuracy of the final output $\fhat$;
see~\Cref{SecEnsemble,SecBinary} for such insights in the contexts of
least-squares regression and binary classification, respectively.  On
the empirical side, we demonstrate the effectiveness of the
\ERIC~procedure for a range of real-world prediction problems,
including forecasting societal ills and pneumonia detection from chest
X-rays, among others; see~\Cref{sec:empirical} for these results.

\subsection{How do helper covariates arise?}
\label{SecHelper}

There are many problems in which helper covariates arise in a natural
way.  Here we describe a few broad classes that motivated our work.

\paragraph{Noisy or mis-calibrated surrogate responses:}

As noted previously, in many applications, it is difficult and/or
costly to collect true responses $\Response$.  At the same time, it is
often the case that one has access to a procedure---for instance, a
pre-trained ML model, or some other predictive system---that can be
used to generate surrogate responses $\Response'$.  For example,
consider the problem of predicting diseases in medicine: it is
time-consuming and costly to have medical professionals produce
ground-truth labels, so researchers have resorted to automated
means of producing pseudo-labels.  One approach is to train natural
language models models or LLMs on electronic health
records~\cite{irvin2019chexpert}); other researchers have used
heuristics from domain experts to generate noisy labels for
identifying aortic valve malformations~\cite{fries2019weakly}.
Similarly, in protein folding, the Alpha-Fold
system~\cite{jumper2021highly} can be used to produce a large number
of pseudo-labels, which do not perfectly correspond with labels
determined by physical measurements~\cite{scardino2023good}.
Responses can also be collected by automated systems, such as in
crowd-sourcing~\cite{whitehill2009whose}; this approach underlied the
generation of the ImageNet dataset~\cite{deng2009imagenet}, which
catalyzed research in computer vision and deep learning.  Such
crowd-sourced responses, while often informative of the true response,
can be of variable quality and suffer from calibration issues.

Given a collection of surrogate responses, one naive approach is to
treat them as instances of the true response, and then to feed this
augmented data into the usual pipeline.  However, doing so directly
passes along any noise or mis-calibration in the surrogate responses,
and these discrepancies can lead the naive approach to behave very
poorly.\footnote{For example, in binary classification (where
$\Response \in \{0,1\}$), there might be non-trivial amounts of label
flipping in moving from $\Response$ to $\Response'$.  For real-valued
prediction problems, it could be that the surrogate response
$\Response'$ is close to $\Response$ after some type of
transformation.  Removing bias from $\Response'$ could be achieved by
a linear transformation.}  Within our framework, it is natural to
treat the noisy response as a helper covariate---i.e., setting
$\Surrogate = \Response'$.  The first stage of our procedure then
involves fitting a predictive model $\auxhat$ from pairs $(\Covariate,
\Response')$ to the true responses $\Response$.  As we discuss in this
sequel, this step allows our procedure to automatically adjust for
bias or mis-calibration in the generation of the pseudo-responses
$\Response'$, and moreover to exploit any additional information
available in the standard covariate $\Covariate$ in doing so.

\paragraph{Forecasting and longitudinal data:}  There are various
forms of data that have a temporal structure, among them longitudinal
data collected through surveys (e.g.,~\cite{diggle2002analysis,
  fitzmaurice2012applied}) and time-series models of dynamic
phenomena, including weather, stock prices, disease and rates
\mbox{(e.g.,~\cite{box2015time, brillinger2001time}).} More formally,
suppose that $\{\Zcov{t} \}_{t \geq 0}$ is a vector-valued time series
representing the phenomenon of interest, and our goal is to predict
the response $\Response \defn \Zcovv{\tfuture}{1}$ of the first
co-ordinate at some future time $\tfuture$, based on the features
$\Zcov{\tnow}$ available at the present time $\tnow$.  For such
problems, a natural choice of helper covariate $\Surrogate$ is the
feature vector $\Zcov{\tinter}$ measured at some intermediate time
$\tinter \in (\tnow, \tfuture)$.  Helper covariates of this type are
available in many applications.  For instance, in policy analysis, the
response $\Response$ might correspond to the effect of a job training
program at a time $9$ years after the conclusion of the program (so
that $\tfuture \defn \tnow + 9$).  While there might be relatively few
such measurements $9$ years in the future, it could be much easier to
obtain data on employment rates and other features in the 2 years
following the program (e.g., ~\cite{athey2019surrogate}).
See~\Cref{sec:BRFSS} for in-depth exploration of another example,
involving forecasting of societal ills such as alcoholism,

If we make the identifications $\Surrogate = \Zcov{\tinter}$ and
$\Covariate = \Zcov{\tnow}$, the auxiliary prediction problem that
determines $\auxhat$ becomes one of predicting the future response
$\Response$ at time $\tfuture$ based on feature pairs $(\Zcov{\tnow},
\Zcov{\tinter})$ from the current time $\tnow$ and an intermediate
time $\tinter$.  Since $\Zcov{\tinter}$ is closer in time to the
response $\Response$, we expect that this auxiliary prediction problem
should be substantially easier than that of predicting $\Response$
based on $\Zcov{\tnow}$ alone.

\paragraph{Transfer learning and distribution shift:}
Transfer learning refers to the problem of sharing information from
multiple (presumably related) tasks so as to improve predictive
performance (e.g.,~\cite{zhuang2020comprehensive, pan2009survey}).
The framework of helper covariates provides a natural
mechanism for sharing information from related tasks.  For example, in
personalized recommendation systems for online marketplaces, platforms
may have limited data on whether a given customer has purchased a
product (corresponding to the targeted response $\Response$).
However, they may have data and trained systems to predict whether or
not customers have clicked on the web-page for that
product~\cite{bastani2021predicting}; this information can be encoded
as a helper covariate $\Surrogate$.  In the closely related area of
distribution shift (e.g.~\cite{shimodaira2000improving, sugiyama2007covariate, ma2023optimally, pmlr-v75-kpotufe18a,
schmidthieber24transfer, reeve2021adaptive}), 
the goal is to build predictive models that are robust to changes in
the data-generating mechanisms; see the paper~\cite{Wilds21} for
survey of how it arises.  As one example, consider the problem of
predicting economic well-being based on satellite
imagery~\cite{Yeh20}; in this application, we might be interested in
predictions for one country (e.g., Angola) and use the responses of a
model trained on related country (e.g., Kenya) as a helper covariate.


\subsection{Related work}
\label{SecRelated}

Datasets that involve a combination of labeled and unlabeled samples
have been studied in different communities, using a variety of
techniques.  In machine learning, this set-up is referred to as
semi-supervised learning, and there is a rich associated literature
(e.g., see the survey~\cite{chapelle2009semi} and references therein).
Much of this work has focused on classification problems, contributing
both methodology (e.g., ~\cite{kulis2005semi, laine2017temporal}) as
well as theoretical insights (e.g.,~\cite{delalleau2005efficient,
  joachims1999transductive}).  Statistics also has a rich literature
on the related area of missing data, including methods based on
imputation and pseudo-labeling (e.g.,~\cite{chakrabortty2018,
  wang2023pseudo, lee2013pseudo}), the EM algorithm
(e.g.,~\cite{mclachlan2008algorithm, wu1983convergence, BalWaiYu17}),
and other semi-parametric approaches
(e.g.,~\cite{robins1994estimation, robins1995semiparametric}).  There
is also a considerable body of work on the inferential aspects of the
semi-supervised set-up
(e.g.,~\cite{10.1214/18-AOS1756,zhang21semisup}).

The framework of this paper differs from the standard semi-supervised
set-up via the introduction of helper covariates.  This notion
connects with, and establishes links between other previously
unrelated lines of research.  As described in the previous section, it
is natural to view surrogate responses collected from a pre-fit
machine learning model as a form of helper covariate.  This connects
with the machine learning literature on weak supervision
(e.g.,~\cite{ratner2016data, ratner2017snorkel,
  robinson2020strength}), as well as a related literature focusing on
robustness to noisy labels (e.g.,~\cite{natarajan2013learning,
  song2022learning}).  In causal inference, researchers have studied
the use of surrogate indices (e.g.,~\cite{athey2019surrogate,
  kallus2020role, hou2023surrogate}), which can be exploited in
estimating a treatment effect.  These indices can be understood as
particular types of helper covariates, and the methodology in the
paper~\cite{hou2023surrogate} is related to our general three-stage
approach, albeit in a specific parametric setting.  In the context of
transfer learning, Bastani~\cite{bastani2021predicting} also studies a
related method, in which both tasks are parameterized by
high-dimensional linear models.  Finally, related in spirit (but
distinct in goals) is the use of a pre-fit machine learning model to
construct sharper confidence
intervals~\cite{angelopoulos2023prediction, zrnic2024cross}.


\paragraph{Notation:} We collect here notation that is used throughout
the paper.  Given a collection of samples $X_i \sim \Prob$ for $i = 1,
\ldots, \numobs$,, we define $\| g \|_{\tnumobs}^2 \defn
\frac{1}{\tnumobs} \sum_{i=1}^\tnumobs g^2(\Covariate_i)$ and $\| g
\|_2^2 \defn \int g^2(\Covariate) \, d\PP(X)$.  Note that $\|g\|_2^2 =
\EE[g^2(X)]$ whenever $g$ is deterministic, or more generally, even if
$g$ is random, as long as it is independent of $X$.  When we write
$\Exs[\auxhat^2(X)]$ for a random function independent of $X$, we are
conditioning on $\auxhat$.  Similarly, we define $\PP_{\tnumobs} g
\defn \empsum{i} g(\Covariate_i)$ and $\PP g \defn \int g(\Covariate)
\, d\PP(\Covariate)$, with similar comments regarding the meaning of $\PP
\auxhat$ for a random function $\auxhat$.  Using this notation, we
have $\| g \|_{\tnumobs}^2 = \PP_{\tnumobs} g^2$ and $\| g\|_2^2 = \PP
g^2$. It is also convenient to consider the above norms over subsets
of the data $\dataset \subset \{\Covariate_i\}_{i=1}^\numobs$, for
which purpose we define $\| g \|_\dataset^2 \defn \frac{1}{|\dataset|}
\sum_{\Covariate \in \dataset} g^2(\Covariate)$.

\paragraph{Organization:}  The remainder of the paper is organized as
follows.  In \Cref{sec:main-results}, we formalize the problem of
prediction with helper covariates, and then describe the
\ERIC~procedure analyzed in this paper.  Later subsections are devoted
to theoretical guarantees for squared-loss (cf.  \Cref{thm:sqloss}
in~\Cref{sec:sqloss-results}) and more general loss functions
(cf. \Cref{thm:main} in~\Cref{SecGeneralLoss}).
In~\Cref{SecEnsemble,SecBinary}, respectively, we explore some
qualitative predictions of our theory for least-squares problems and
binary classification, respectively.  \Cref{sec:empirical} is devoted
to exploration of four different applications in which helper
covariates naturally arise, and of the improvements
possible by the \ERIC~method.  We conclude with a summary as well as
future directions in~\Cref{sec:conclusion}. All of our proofs are
deferred to the appendices.


\section{Surrogate-aided prediction via the \ERIC~method}
\label{sec:main-results}

We begin with a formalization of the problem of surrogate-aided
prediction in~\Cref{sec:problem}.  In~\Cref{SecPast}, we introduce our
proposed meta-procedure, referred to as ``prediction aided surrogate
training'', or the \ERIC~method for short.  \Cref{sec:sqloss-results}
is devoted to a non-asymptotic and explicit guarantee
(\Cref{thm:sqloss}) on its behavior for the squared-loss error; this
theory provides a number of qualitative insights that are explored
in~\Cref{SecEnsemble}.  In~\Cref{SecGeneralLoss}, we provide a more
general result (\Cref{thm:main}) that covers a broad class of loss
functions.  We discuss concrete insights for binary classification
in~\Cref{SecBinary}.


\subsection{Prediction with helper covariates}
\label{sec:problem}

We begin by formalizing the problem of prediction with helper
covariates.  Let $\Response \in \Yspace \subseteq \real$ be a response
variable of interest, and let $\Covariate \in \Xspace$ be a covariate
vector, where the pair $(\Covariate, \Response)$ are distributed
according to some unknown joint distribution $\PP_{\Covariate,
  \Response}$ over the space $\Xspace \times \Yspace$.  Our goal is to
estimate a good predictor, meaning a function $\covariate \mapsto
f(\covariate)$ such that $f(\covariate)$ is ``close'' to the
associated response $\response$.  We measure the quality of a given
prediction via some loss function $\loss: \real \times \real
\rightarrow \real$---so that $\loss(f(\covariate), y)$ is our accuracy
measure--- and our goal to find a function $\fstar$ that minimizes the
population risk
\begin{align}
\label{EqnPopulationRisk}
\fstar & = \arg \min_{f \in \Fclass} \EE_{\Covariate, \Response}
\loss(f(\Covariate), \Response),
\end{align}
where $\Fclass$ is a given function class, and $\EE_{\Covariate,
  \Response}$ denotes expectation under the joint $\PP_{\Covariate,
  \Response}$. For certain loss functions---among them the
least-squares loss $\loss(\yhat, y) = (\yhat - y)^2$---the optimal
prediction function (assuming that $\Fclass$ is sufficiently rich) is
given by the conditional mean $\tarstar(x) \defn \Exs[Y \mid X = x]$,
but our theory allows for other settings as well.  Throughout this
paper, we refer to $\tarstar$ as the \emph{target function}, since our
ultimate goal is to obtain an accurate estimate $\tarhat$ of it.

In the standard set-up of prediction via empirical risk minimization,
the statistician is given a collection of pairs $(\Covariate_i,
\Response_i)$ drawn in an i.i.d. manner from the unknown distribution
$\Prob_{\Covariate, \Response}$.  In the helper-aided extension here,
we augment this standard set-up as follows.  In addition to the pair
$(\Covariate, \Response)$, we introduce the helper covariate
$\Surrogate$, and assume that is a unknown joint
distribution\footnote{This three-way joint marginalizes down to
$\Prob_{\Covariate, \Response}$ that defines the population
risk~\eqref{EqnPopulationRisk}.} $\Prob_{\Covariate, \Surrogate,
  \Response}$.  Our goal remains to estimate the function $\fstar$
defined by the population risk~\eqref{EqnPopulationRisk}, and in order
to do so, we are given access to a total of $\numobs$ independent
samples.  The full sample size is partitioned as $\numobs = \lnumobs +
\unumobs$, where $\lnumobs$ and $\unumobs$ correspond (respectively)
to the number of ``labeled'' and ``unlabeled'' samples, respectively.
The \emph{labeled and unlabeled datasets} take the form
\begin{align}
\label{EqnDatasets}  
\LabelSet = \{(\Covariate_i, \Surrogate_i, \Response_i)
\}_{i=1}^\lnumobs \quad \mbox{and} \quad \UnlabelSet =
\{(\Covariate_{\lnumobs + i}, \Surrogate_{\lnumobs + i} ) \big
\}_{i=1}^\unumobs,
\end{align}
where $(\Covariate_i, \Surrogate_i, \Response_i)$ are drawn
i.i.d. from $\Prob_{\Covariate, \Surrogate, \Response}$ for $i = 1,
\ldots, \lnumobs$, and $(\Covariate_{\lnumobs + i},
\Surrogate_{\lnumobs + i})$ are drawn i.i.d. from $\Prob_{\Covariate,
  \Surrogate}$ for $i = 1, \ldots, \unumobs$.  Given the independent
sampling model described here, in the language of missing data, our
assumption is that the responses are missing completely-at-random.  We
address possible relaxations of this condition in the discussion.

Summarizing then, given access to the datasets $\LabelSet$ and
$\UnlabelSet$, our goal is to estimate a predictor $\covariate \mapsto
\fhat(\covariate)$ that is close to the population optimum $\fstar$
from equation~\eqref{EqnPopulationRisk}.  While the final predictor
cannot depend on the helper covariates $\Surrogate$, they can be
exploited at an intermediate phase of training, as we describe next.


\subsection{The \ERIC~method}
\label{SecPast}

In this paper, we analyze a three-stage procedure that takes as input
the labeled $\LabelSet$ and unlabeled $\UnlabelSet$ datasets
(cf. equation~\eqref{EqnDatasets}), and returns a predictor $\fhat$
as output.  In the first step, we use the labeled dataset $\Labelset$
to construct a pseudo-response estimator $\auxhat$.  In the second
step, we use $\auxhat$ to impute responses for the unlabeled dataset
$\Unlabelset$, and in the third step, we fit the final predictor
$\fhat$ using the pseudo-labeled set $\FullDataset$.
Algorithm~\ref{AlgPast} provides a formal specification of these three
steps; we describe it as ``prediction aided by surrogate training'',
and refer to it as the \ERIC~procedure for short.

\begin{algorithm}[h]
\caption{Prediction Aided by Surrogate Training (\ERIC)}
\begin{algorithmic}[1]
\STATE \texttt{Inputs:} (i) Datasets $\LabelSet = \{(\Covariate_i,
\Surrogate_i, \Response_i)\}_{i=1}^{\lnumobs}$ and $\UnlabelSet =
\{(\Covariate_i, \Surrogate_i) \}_{i=\lnumobs+1}^\tnumobs$. (ii)
Procedure $\Proc$ for estimating pseudo-response function
$\auxhat$. (iii) Loss function $\Loss$ and function class $\TarClass$
for estimating $\tarstar$.
\vspace{6pt}

\STATE Construct pseudo-response function $\auxhat$ by applying
procedure $\Proc$ to the labeled dataset $\LabelSet$.

\STATE Use $\auxhat$ to construct the full dataset based on
pseudo-responses
\begin{align*}
\Ytil_i = \begin{cases} \Response_i & \text{for} \quad i = 1, \ldots,
  \lnumobs \\ \auxhat(\Covariate_i, \Surrogate_i) & \text{for} \quad i
  = \lnumobs+1, \ldots, \tnumobs. \end{cases}
\end{align*}

\STATE Using the dataset $\DatasetTil = \{(\Covariate_i, \Ytil_i)
\}_{i=1}^\numobs$, compute the estimate
\begin{align}
  \label{EqnDefnTarHat}
\tarhat \in \arg \min_{\tarplain \in \TarClass} \Big\{
\frac{1}{\tnumobs} \sum_{i=1}^\tnumobs \loss \big(
\tarplain(\Covariate_i), \Ytil_i \big) \Big\}.
\end{align}
\end{algorithmic}
\label{AlgPast}
\end{algorithm}

We emphasize that the \ERIC~method should be viewed as a
\emph{meta-procedure}, since we obtain different instantiations
depending on:
\begin{cdesc}
\item[Auxiliary fit:] the choice of procedure $\Proc$ for constructing
  the pseudo-response estimator $\auxhat$.
\item[Loss function:] The loss function used in
  step~\eqref{EqnDefnTarHat}, and
 \item[Function class:] The choice of function class $\Fclass$ for
   estimating $\tarhat$.
\end{cdesc}

We measure the quality of the final predictor $\fhat$ via its closeness
to the optimal function $\fstar$ defined by equation~\eqref{EqnPopulationRisk}.
In our
analysis, we remain agnostic to the choice of the procedure $\Proc$
used to compute $\auxhat$, and state a guarantee on the quality of the
final output $\fhat$---in particular, an upper bound on $\|\fhat -
\fstar\|_2$.  This guarantee involves an overhead term depending on
the deviation between $\auxhat$ and an ideal function $\auxstar$.
When using the least-squares loss function, we have
$\fstar(\covariate) = \Exs[\Response \mid \Covariate = \covariate]$,
and this \emph{ideal proxy} is given by
\begin{align}
\label{EqnIdealProxy}
\auxstar(\covariate, \surrogate) & \defn \Exs \big[ \Response \mid
  (\Covariate, \Surrogate) = (\covariate, \surrogate) \big].
\end{align}
This function $\auxstar$ is also the ideal proxy when using losses
based on generalized linear models (see~\Cref{sec:loss-functions} for
details), of which the least-squares loss is a special case.

Our results depend on the accuracy of $\auxhat$ as an estimate of
$\auxstar$.  For this reason, even when the response space is
discrete---say $\Yspace = \{0,1 \}$ in the case of binary
classification---our theory reveals that superior performance can be
obtained by using ``soft label'' information.  \Cref{thm:main} and the
surrounding discussion to follow makes this intuition precise.

\medskip


\subsection{Guarantees for square loss}
\label{sec:sqloss-results}

In this section, we state a guarantee for the \ERIC~procedure when
$\fhat$ is estimated using least-squares---that is, based on the loss
function $\loss(\yhat, y) = (\yhat - y)^2$.  At a high-level, this
guarantee (and the subsequent ones in~\Cref{thm:main} to follow)
involve three terms:
\begin{cdesc}
\item[Oracle accuracy:] The quantity $\radcrit> 0$, to be defined in
  equation~\eqref{EqnDefnRadcrit}, corresponds to the the rate at
  which it is possible to estimate $\tarstar$ using a fully labeled
  dataset of size $\numobs$.
\item[Pseudo-response accuracy:] Accuracy of pseudo-response estimator
  $\auxhat$ relative to ideal proxy $\auxstar$ from
  equation~\eqref{EqnIdealProxy}.  Ideally, it involves the
  $\Surrogate$-smoothed version $\ftil$ of the estimate $\auxhat$
  (cf. equation~\eqref{EqnDefnFtil}).
\item[Higher-order fluctuations:] For a given error probability
  $\delta \in (0,1)$, there is a term $\HighOrder_\numobs(\delta)$
  that controls the probabilistic fluctuations
  (cf. equation~\eqref{EqnDefnHigh}).
\end{cdesc}
\noindent We now introduce each of these terms in turn.

\paragraph{Oracle accuracy:} Beginning
with the oracle accuracy $\radcrit$, it is defined via a fixed point
relation involving the localized Rademacher complexity.  Quantities of
this type are measures of the ``complexity'' of a given function
class, and are well-known to control the accuracy of procedures based
on empirical risk minimization.  In particular, let
$\{\rade_i\}_{i=1}^\numobs$ denote an i.i.d. sequence of Rademacher
variables.\footnote{That is, we have $\rade_i \in \{-1, +1 \}$
equi-probably.}  For any radius $\myrad > 0$, we define
\begin{subequations}
  \begin{align}
\label{EqnDefnLocalRad}    
\radcomp(\myrad) & \defn \Exs_{\rade, \Covariate} \Biggr[ \sup_{
    \substack{\tarplain \in \Fclass \\ \|\tarplain - \tarstar\|_2 \leq
      \myrad}} \Big| \frac{1}{\numobs} \sum_{i=1}^\numobs \rade_i
  \big(\tarplain(\Covariate_i) - \tarstar(\Covariate_i) \big) \Big|
  \Biggr].
\end{align}
We then define the \emph{oracle accuracy} as
\begin{align}
\label{EqnDefnRadcrit}  
\radcrit & \defn \arg \min_{\myrad > 0} \Big \{ \frac{\myrad}{16}
\geq \frac{\radcomp(\myrad)}{\myrad} \Big \}.
\end{align}
\end{subequations}
To explain the significance of this quantity for our theory, suppose
that we were able to compute an estimate $\tardagger$, of the form in
equation~\eqref{EqnDefnTarHat}, but with the pseudo-responses
$\Ytil_i$ all replaced by genuine responses $\Response_i$.  Under
quite general conditions, it can be shown that this oracle estimate
$\tardagger$ would have error of the form $\|\tardagger - \tarstar\|_2
\asymp \radcrit$.  For this reason, the term $\radcrit$ serves as a
proxy for the error incurred by empirical risk minimization on a fully
labeled dataset.  As some concrete examples, we have:

\begin{carlist}
\item For linear regression in dimension $\usedim$, we have
  $\radcrit \asymp \sqrt{\usedim/\numobs}$.
\item For linear regression in dimension $\usedim$ with sparsity
  $\kdim$, we have $\radcrit \asymp \sqrt{\frac{\kdim \log
    \usedim}{\numobs}}$.
\item For Lipschitz regression in dimension $\usedim = 1$, we have
  $\radcrit \asymp \numobs^{-1/3}$.
\end{carlist}
\smallskip

\noindent See Chapter 13 in the book~\cite{dinosaur2019} for these
results, and other examples. \\

\smallskip

Our proofs assume that the oracle accuracy, as defined by the
Rademacher complexity, satisfies the following \emph{monotonicity
condition:} for any pair of sample sizes with $\numobs_1 < \numobs_2$,
we have
\begin{align}
  \label{EqnRadeMonotone}
\sqrt{\numobs_1} \; r_{\numobs_1} \; \leq \; c \sqrt{\numobs_2} \; r_{\numobs_2}.
\end{align}
Note that this condition holds whenever $\radcrit$ is lower bounded
by $1/\sqrt{\numobs}$, which holds for most function classes of
interest, as in the examples given above. Up to scaling factors in our guarantee, we can assume without loss of generality that $c = 1$.

\paragraph{Quality of pseudo-responses:}  The second term in our
analysis reflects the quality of the pseudo-response estimator
$\auxhat$ as a surrogate to the ideal version $\auxstar$ from
equation~\eqref{EqnIdealProxy}.  Since $\auxhat$ is applied to impute
responses on on the $\unumobs$ unlabeled samples, the most
straightforward measure of accuracy is the squared empirical norm
\begin{subequations}
\begin{align}
\label{EqnNaiveMeasure}  
\|\auxhat - \auxstar\|^2_\unumobs & \defn \frac{1}{\unumobs} \sum_{i
  \in \Unlabelset} \big( \auxhat(\Covariate_i, \Surrogate_i) -
\auxstar(\Covariate_i, \Surrogate_i) \big)^2
\end{align}
defined by samples in the unlabeled set.  However, most of our results
measure the accuracy in terms of the same squared semi-norm applied to
the \emph{$\Surrogate$-smoothed functions} $\ftil$ and the true
regression function $\fstar$---viz.
\begin{align}
\label{EqnDefnFtil}  
\ftil(\covariate) \defn \Exs[\auxhat(\Covariate, \Surrogate) \mid
  \Covariate = \covariate], \quad \mbox{and} \quad \fstar(\covariate)
\equiv \Exs[\auxstar(\Covariate, \Surrogate) \mid \Covariate =
  \covariate].
\end{align}
\end{subequations}
As can be seen via Jensen's inequality, the squared semi-norm $\|\ftil
- \fstar\|_\unumobs^2$ is always smaller than---and can be
substantially so---than the quantity $\|\auxhat -
\auxstar\|_\unumobs^2$ defined in equation~\eqref{EqnNaiveMeasure}.

\paragraph{Higher-order term:}  Finally, given some error probability
$\delta \in (0, 1)$, we prove results that hold with probability at
least $1 - \delta$.  Obtaining high probability guarantees of this
type requires introducing a higher-order term
$\HighOrder_\numobs(\delta)$ depends on the boundedness and/or tail
behavior assumed.  Since our main focus is not obtaining the most
general results, we impose relatively stringent assumptions that allow
for a streamlined analysis.  In particular, we assume throughout that
the function $\Fclass$ is \emph{uniformly bounded}; by rescaling as
needed, we can assume that $\|f\|_\infty \leq 1$ for all $f \in
\Fclass$, and that the same holds for $\auxstar$.  Moreover, recalling
the definition~\eqref{EqnIdealProxy} of $\auxstar$, we assume that the
zero-mean noise variables $\Response_i - \auxstar(\Covariate_i,
\Surrogate_i)$ is \emph{$\sigbound$-bounded}, meaning that it takes
values in the interval $[-\sigbound, \sigbound]$ for some $\sigbound >
0$.  To be clear, both of these conditions could be relaxed by making
use of more sophisticated tail bounds for empirical
processes~\cite{adamczak2008unbounded}, but this is not the main focus
of our work here. \\

In terms of these assumptions, the higher-order term
in~\Cref{thm:sqloss} takes the form
\begin{align}
\label{EqnDefnHigh} 
\HighOrder_\numobs(\pardelta) \defn \max\{20, 10\sigma\}
\sqrt{\frac{2\log(4\Logfun(\radcrit)/\pardelta)}{\numobs}} +
\max\{640, 80\sigbound\} \cdot
\frac{\log(4\Logfun(\radcrit)/\pardelta)}{\radcrit \numobs}
\end{align}
where $\Logfun(\radcrit) \defn
\log_2\big(\tfrac{4\sigbound}{\radcrit}\big)$.  While we provided
explicit constants here for concreteness, we made no attempt in our
analysis to obtain sharp ones.

\bigskip

With these three pieces in place, we are now ready to state our main
guarantees for the \ERIC~procedure based on least-squares loss.
Recapitulating for clarity, it involves the \emph{oracle accuracy}
from equation~\eqref{EqnDefnRadcrit}, the empirical norm
$\|\cdot\|_\unumobs$ and \emph{$\Surrogate$-smoothed response
estimator} $\ftil$ from equations~\eqref{EqnNaiveMeasure}
and~\eqref{EqnDefnFtil}, respectively, and the higher-order
term~\eqref{EqnDefnHigh}.

\begin{theorem}
\label{thm:sqloss}
For a bounded function class $\Fclass$ and $\sigbound$-bounded noise,
consider the \ERIC~procedure implemented with an arbitrary first-stage
predictor $\auxhat$, and the $\Surrogate$-smoothed version $\ftil$ of
$\auxhat$. Then for any $\pardelta \in (0, 1)$, the final output
$\fhat$ of the \ERIC~procedure has prediction error at most
\begin{align}
\label{EqnSqLossBound}
\|\tarhat - \tarstar \|_2 \leq \underbrace{(11 + 10 \sigbound)
  \radcrit}_{\mbox{Oracle risk}} + \underbrace{3 \EmpNormUnlab{\ftil -
    \tarstar}}_{\mbox{Pseudo-response defect}} + \underbrace{2
  \HighOrder_\numobs(\pardelta)}_{\mbox{Higher-order term}}
\end{align}
\mbox{with probability at least $1 - \pardelta$.}
\end{theorem}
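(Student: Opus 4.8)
The plan is to decompose the error $\|\tarhat - \tarstar\|_2$ by comparing $\tarhat$ — the empirical risk minimizer over the pseudo-labeled dataset $\DatasetTil$ — against the target $\tarstar$, using the basic inequality for least squares. Since $\tarhat$ minimizes $\frac{1}{\numobs}\sum_i (\tarplain(\Covariate_i) - \Ytil_i)^2$ over $\TarClass$, and $\tarstar \in \TarClass$, we have the standard starting point $\frac{1}{\numobs}\sum_i (\tarhat(\Covariate_i) - \Ytil_i)^2 \leq \frac{1}{\numobs}\sum_i (\tarstar(\Covariate_i) - \Ytil_i)^2$. Expanding the square and rearranging yields a bound on the empirical quantity $\|\tarhat - \tarstar\|_\numobs^2$ in terms of a cross term $\frac{1}{\numobs}\sum_i (\Ytil_i - \tarstar(\Covariate_i))(\tarhat(\Covariate_i) - \tarstar(\Covariate_i))$. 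The key is to split the ``noise'' $\Ytil_i - \tarstar(\Covariate_i)$ into two parts depending on whether $i$ indexes a labeled or unlabeled sample: on the labeled set it equals the genuine noise $\Response_i - \tarstar(\Covariate_i)$, which is the term the oracle analysis already controls via the localized Rademacher complexity $\radcrit$; on the unlabeled set it equals $\auxhat(\Covariate_i, \Surrogate_i) - \tarstar(\Covariate_i)$, which is the pseudo-response defect. Conditioning on $\auxhat$ throughout keeps it deterministic relative to the fresh unlabeled covariates.

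First I would handle the labeled-set cross term exactly as in the classical oracle analysis (Chapter 13 of \cite{dinosaur2019}): a peeling/localization argument over shells $\|\tarplain - \tarstar\|_2 \in [2^j \radcrit, 2^{j+1}\radcrit]$, combined with the fixed-point definition~\eqref{EqnDefnRadcrit} of $\radcrit$ and a one-sided Talagrand/Bernstein concentration inequality for the empirical process, shows this contributes at most a constant multiple of $\radcrit \|\tarhat - \tarstar\|_2 + \radcrit^2$, up to the higher-order fluctuation term $\HighOrder_\numobs(\pardelta)$ from~\eqref{EqnDefnHigh} — here the $\sigbound$-boundedness of the noise and the uniform boundedness of $\TarClass$ enter, and the factor $\Logfun(\radcrit) = \log_2(4\sigbound/\radcrit)$ counts the number of peeling shells. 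The monotonicity condition~\eqref{EqnRadeMonotone} is what lets us use the sample size $\numobs$ (rather than $\lnumobs$) in $\radcrit$ despite only $\lnumobs$ samples carrying genuine labels: the genuine-noise cross term is a sum over $\lnumobs$ terms divided by $\numobs$, so the effective Rademacher complexity is $\frac{\lnumobs}{\numobs}\radcomp$-scale, and~\eqref{EqnRadeMonotone} ensures $r_{\lnumobs} \cdot \lnumobs/\numobs \lesssim \radcrit$.

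Second I would handle the unlabeled-set cross term. Since $\auxhat$ is fit only on $\LabelSet$ and is therefore independent of the unlabeled covariates $\{(\Covariate_i, \Surrogate_i)\}_{i > \lnumobs}$, conditioning on $\auxhat$ makes $\auxhat(\Covariate_i,\Surrogate_i) - \tarstar(\Covariate_i)$ a fixed function of fresh data. By Cauchy--Schwarz, $\frac{1}{\numobs}\sum_{i \in \Unlabelset} (\auxhat(\Covariate_i,\Surrogate_i) - \tarstar(\Covariate_i))(\tarhat(\Covariate_i) - \tarstar(\Covariate_i)) \leq \frac{\unumobs}{\numobs} \|\auxhat - \tarstar\|_\unumobs \cdot \|\tarhat - \tarstar\|_\unumobs$. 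The crucial refinement — replacing $\auxhat$ by its $\Surrogate$-smoothed version $\ftil$ from~\eqref{EqnDefnFtil} — comes from noting that $\tarhat(\Covariate_i) - \tarstar(\Covariate_i)$ depends only on $\Covariate_i$, so in the population (or via a further concentration step over the unlabeled covariates) the inner product against $\auxhat(\Covariate_i,\Surrogate_i)$ only sees the conditional expectation $\Exs[\auxhat(\Covariate,\Surrogate)\mid\Covariate] = \ftil(\Covariate)$. Iterating Jensen's inequality as remarked after~\eqref{EqnDefnFtil}, this yields the tighter bound $\|\ftil - \tarstar\|_\unumobs$ in place of $\|\auxhat - \tarstar\|_\unumobs$.

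Finally I would combine the two cross-term bounds: we arrive at an inequality of the shape $\|\tarhat - \tarstar\|_\numobs^2 \lesssim \radcrit\|\tarhat - \tarstar\|_2 + \radcrit^2 + \|\ftil - \tarstar\|_\unumobs \cdot \|\tarhat - \tarstar\|_2 + \HighOrder_\numobs(\pardelta)\|\tarhat - \tarstar\|_2$, then convert the empirical norm $\|\cdot\|_\numobs$ on the left to the population norm $\|\cdot\|_2$ (again via a one-sided localized empirical-process bound, absorbed into $\radcrit$ and $\HighOrder_\numobs$), and solve the resulting quadratic inequality in the scalar $\|\tarhat - \tarstar\|_2$ to read off~\eqref{EqnSqLossBound} with the stated constants. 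The main obstacle I anticipate is bookkeeping the interaction between the $\|\cdot\|_\numobs \leftrightarrow \|\cdot\|_2$ norm conversions and the localization: one must run the peeling argument carefully so that a single event of probability $\geq 1 - \pardelta$ simultaneously controls the genuine-noise empirical process, the empirical-to-population norm comparison for functions in the shell, and (if a concentration step is used there) the unlabeled Cauchy--Schwarz term — and the constants $20, 640, 80$ appearing in~\eqref{EqnDefnHigh} have to be tracked through all of these, which is the delicate (if routine) part.
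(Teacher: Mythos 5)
Your overall architecture matches the paper's: basic inequality for the ERM, a labeled/unlabeled split of the cross term, localization via peeling and the critical radius for the genuine-noise part, Cauchy--Schwarz for the pseudo-response defect, and a final quadratic solve. The one place where your proposal as written would fail is the unlabeled cross term. You first apply Cauchy--Schwarz to get $\tfrac{\unumobs}{\numobs}\EmpNormUnlab{\auxhat - \tarstar}\cdot\EmpNormUnlab{\tarhat - \tarstar}$ and then propose to ``refine'' $\EmpNormUnlab{\auxhat - \tarstar}$ down to $\EmpNormUnlab{\ftil - \tarstar}$ via Jensen. That order cannot work: once Cauchy--Schwarz has been applied pointwise, the $\Surrogate$-dependence of $\auxhat$ is already locked into the norm, and Jensen only tells you $\EmpNormUnlab{\ftil - \tarstar} \leq \EmpNormUnlab{\auxhat - \auxstar}$ --- it gives no mechanism for replacing the larger quantity by the smaller one after the fact. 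The correct order (and what the paper does in its Lemma on $\Term_3$/$\Term_4$) is to decompose the unlabeled pseudo-noise \emph{before} Cauchy--Schwarz as $\auxhat - \auxstar = \bigl(\auxhat - \ftil\bigr) + \bigl(\tarstar - \auxstar\bigr) + \bigl(\ftil - \tarstar\bigr)$: the first two pieces have conditional mean zero given $\Covariate$ (conditioning on $\auxhat$, which is independent of $\UnlabelSet$), so their inner product against $\tarhat - \tarstar$ is a zero-mean localized empirical process contributing only $O(\radcrit\|\tarhat - \tarstar\|_2)$ plus fluctuations; only the deterministic-in-$\Surrogate$ remainder $\ftil - \tarstar$ is bounded by Cauchy--Schwarz, which is what produces $\EmpNormUnlab{\ftil - \tarstar}\cdot\EmpNormUnlab{\tarhat - \tarstar}$ (the latter factor then needing its own empirical-to-population comparison). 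Your parenthetical about ``a further concentration step over the unlabeled covariates'' gestures at this, but the decomposition must be made explicit and must precede Cauchy--Schwarz.

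Two smaller remarks. First, the paper centers the labeled noise at $\auxstar(\Covariate,\Surrogate)$ rather than at $\tarstar(\Covariate)$ --- this is where the $\sigbound$-boundedness hypothesis actually lives, and it lets the paper exploit the exact $L^2$ orthogonality $\inprod{\tarhat - \tarstar}{\tarstar - \auxstar}_2 = 0$ to get the clean identity $\|\tarhat - \auxstar\|_2^2 = \|\tarhat - \tarstar\|_2^2 + \|\tarstar - \auxstar\|_2^2$; your centering at $\tarstar$ still works but changes the constant bookkeeping. Second, your explanation of why $\radcrit$ (at sample size $\numobs$) controls the labeled term is essentially right: the paper uses $\sqrt{\lnumobs}\,\lradcrit \leq \sqrt{\numobs}\,\radcrit$ exactly as you describe.
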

\noindent See~\Cref{sec:proof-sqloss} for a proof of this result. \\

As previously described, the bound~\eqref{EqnSqLossBound} consists of
three terms: one involving the critical radius $\radcrit$, the second
involving the difference $\ftil - \tarstar$, and the third term
$\HighOrder_\numobs(\pardelta)$ corresponding to probabilistic
fluctuations.

\paragraph{When can we achieve the oracle risk?}
It is interesting to ask under what conditions the first term is
dominant; it corresponds to a type of oracle risk that could be
achieved if we had access to $\numobs$ labeled samples.  We claim that
the term $\HighOrder_\numobs(\delta)$ is typically negligible with
respect to $\radcrit$.  Indeed, apart from scalar parametric problems,
we have $\radcrit \succsim \numobs^{-1/2}$, in which case inspection
of the definition~\eqref{EqnDefnHigh} shows that we have
$\HighOrder_\numobs(\pardelta) \ll \radcrit$.

Thus, we are left to consider the estimation error term.  It is an
important fact---and one that requires delicacy in our proof---that
this term depends \emph{not} on the difference $\auxhat - \auxstar$,
but rather on the difference $\ftil - \tarstar$.  (Here we recall that
$\ftil(\covariate) = \Exs[\auxhat(\Covariate, \Surrogate) \mid
  \Covariate = \covariate]$ is the $\Surrogate$-smoothed version of
our auxiliary predictor $\auxhat$.)  By Jensen's inequality, we always
have
\begin{align*}
\EmpNormUnlab{\ftil - \tarstar} & \leq \EmpNormUnlab{\auxhat -
  \auxstar},
\end{align*}
and this difference can be substantial in many settings.  As one
simple but extreme example, suppose that we were given access to the
function $\auxhat(\Covariate_i, \Surrogate_i) = \Response_i$, so that
our pseudo-responses were actually correct.  In this case, we have
$\EmpNormUnlab{\auxhat - \auxstar} \neq 0$, but $\ftil = \fstar$.

Otherwise, since the function $\auxhat$ (and hence $\ftil$) is
obtained by training on the labeled dataset $\LabelSet$, the rate at
which the error $\EmpNormUnlab{\ftil - \tarstar}$ decays will depend
on $\lnumobs = |\LabelSet|$.  Thus, in order for this error to be of
lower order relative to the oracle risk $\radcrit$---which depends on
the full sample size $\numobs = \unumobs + \lnumobs$---it should be
the case that the problem of predicting $\Response$ based on the pair
$(\Covariate, \Surrogate)$ should be ``easier'' than that of
predicting $\Response$ based on $\Covariate$ alone.  As discussed in
the introduction, many of our motivating examples---among them
forecasting with future information as surrogates, or prediction with
noisy labels as surrogates---are likely to have this property.  We
explore this issue in a more depth via some synthetic ensembles in the
following subsection.


\subsection{Intuition from some simple ensembles}
\label{SecEnsemble}

The guarantee from~\Cref{thm:sqloss} makes specific predictions about
the quantitative trade-offs inherent to making use of helper
covariates.  In this section, we describe some simple partially linear
ensembles of problems---chosen for illustrative purposes---that
highlight both the sharpness of our theoretical predictions, as well
as the insight that~\Cref{thm:sqloss} provides into the behavior of
the \ERIC~procedure.  We begin in~\Cref{SecEnsembleOne} with a very
simple ensemble that titrates the helpfulness of the helper covariate,
revealing the importance of the signal-to-noise ratio associated with
predicting the ideal proxy $\auxstar$.  In~\Cref{SecEnsembleTwo}, we
turn to a more challenging ensemble in which the helper covariate can
actually be harmful.

\subsubsection{How useful is the helper covariate?}
\label{SecEnsembleOne}

We begin by considering a simple ensemble of problems in which the
utility of the helper covariate can be controlled by a single
parameter $\lambda \in [0, 1]$.  In particular, suppose that we
generate responses $\Response \in \real$ according to the model
\begin{align}
\label{EqnEnsembleOne}  
Y & = \tarstar(X) + \lambda \Surrogate + (1 - \lambda) \, \gnoise,
\end{align}
where the covariate $\Covariate$ is independent of the helper
$\Surrogate$ and noise variable $\gnoise$, and $\fstar$ is some
function of the standard covariate only. For concreteness, we choose
$\Surrogate$ and $\gnoise$ as independent random variables, each with
a $\Normal(0, \sigma^2)$ distribution.  With this set-up, we have the
equivalences \mbox{$\Exs[Y \mid \Covariate = \covariate] =
  \tarstar(\covariate)$,} and \mbox{$\auxstar(\covariate, \surrogate)
  = \tarstar(\covariate) + \lambda \surrogate$.}  Moreover, observe
that
\begin{align}
\label{EqnHelperVariance}  
  \var \big( \Response \mid \Covariate = \covariate \big) = \sigma^2,
  \quad \mbox{whereas} \quad \var \big( \Response \mid (\Covariate,
  \Surrogate) = (\covariate, \surrogate) \big) = (1 - \lambda)^2 \,
  \sigma^2.
\end{align}
Consequently, the utility of the helper covariate $\Surrogate$ is
controlled by $\lambda$.  When $\lambda = 1$, the response $\Response$
is known deterministically once $\Covariate$ is observed in
conjunction with $\Surrogate$; at the other extreme ($\lambda = 0$),
the helper covariate $\Surrogate$ is independent of the response
$\Response$, and so not at all helpful!

Let us perform some additional calculations to understand some
predictions of~\Cref{thm:sqloss} for this particular ensemble.
Suppose---for the sake of concreteness and simplicity---that $\fstar$
is a polynomial function of some degree; we can then write it in the
form $\fstar(\covariate) = \inprod{\betastar}{\Feat(\covariate)}$ for
some feature vector $\Feat(\covariate) \in \real^\usedima$, and some
weight vector $\betastar \in \real^\usedima$.  In this setting, some
standard calculations (see~\Cref{AppEnsemble}) show
that~\Cref{thm:sqloss}, with $\auxhat$ estimated by an appropriate
linear regression, leads to a guarantee of the form
\begin{align}
\label{EqnGuaranteeOne}  
\|\tarhat - \tarstar\|_2 \lesssim \sigma
\sqrt{\frac{\usedima}{\numobs}} + \sigma (1 - \lambda) \,
\sqrt{\frac{\usedima}{\lnumobs}},
\end{align}
where $\numobs$ is the total sample size, and $\lnumobs < \numobs$ is
the number of labeled samples.  Recall that in this case, we have
$\radcrit \asymp \sqrt{\usedima/\numobs}$, corresponding to the oracle
accuracy achievable with $\numobs$ samples.  At the extreme $\lambda =
0$, the helper covariates are independent of the response, in which
case we should expect an overall guarantee scaling as
$\sqrt{\usedima/\lnumobs}$. (Indeed, the unlabeled dataset
$\UnlabelSet$ is useless from the information-theoretic perspective.)
But more optimistically, as long as $\lambda$ is sufficiently large
\footnote{Concretely, whenever $\lambda \in \big[ 1 - c
  \sqrt{\lnumobs/\numobs}, \: 1 \big]$ for some constant $c > 0$, the
right-hand side of the bound~\eqref{EqnGuaranteeOne} scales as $(1 +
c) \sigma \sqrt{\usedima/\numobs}$.}  relative to the ratio
$\frac{\lnumobs}{\numobs} \in (0, 1]$ of labeled samples, we achieve a
  guarantee proportion to the oracle accuracy
  $\sqrt{\usedima/\numobs}$.

\Cref{FigNumSim}(a) provides a graphical illustration of the
correspondence between the theoretically predicted
bound~\eqref{EqnGuaranteeOne}, and empirical performance for synthetic
data.  In particular, we generated responses of the
type~\eqref{EqnEnsembleOne} by drawing $\Covariate \sim
\text{Unif}([0,1]^5)$; choosing the feature mapping
$\Feat(\covariate)$ so as to produce all polynomial interactions of
order up to $3$ for the entries in $\covariate$, thereby yielding a
total dimension $\usedima = 56$; and choosing
the helper $\Surrogate$ and noise variable $\gnoise$ as $\Normal(0,
\sigma^2)$, with both independent of each other and the covariate.

We generated a dataset of total size $\numobs = 1000$, and partitioned
it into a labeled portion of size $|\LabelSet| = \lnumobs = 100$, and
unlabeled portion of size $|\UnlabelSet| = \unumobs = 900$. We then
applied the \ERIC~procedure by first computing an estimate $\auxhat$
via the linear regression described in~\Cref{AppEnsembleOne}, and
using it to generate the pseudo-responses.  We then estimated the
polynomial function $\fstar$ by polynomial regression, and compared
the performance of the resulting estimator $\fhat$ to that of the
naive method which makes use of the labeled dataset $\LabelSet$
only. Panel (a) plots of the root mean-squared error (RMSE) $\|\fhat -
\fstar\|_2$ versus the parameter $\lambda \in [0,1]$.  Consistent with
the theoretical prediction~\eqref{EqnGuaranteeOne}, we that the RMSE
decreases linearly as $\lambda$ increases; moreover, it always lies
below the error of the naive procedure, and it approaches the
performance of the oracle procedure (given access to a fully labeled
dataset of size $\numobs$) as $\lambda \rightarrow 1$.

\begin{figure}[h]
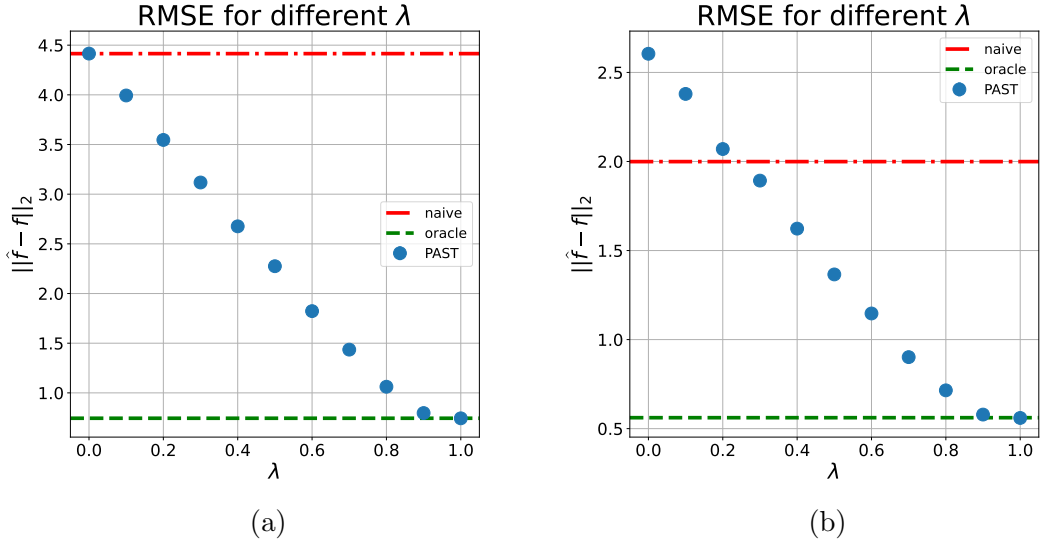

  \begin{center}
    \begin{tabular}{cc}
      \widgraph{0.45\textwidth}{\figdir/nonlinear_simulations} &
      \widgraph{0.45\textwidth}{\figdir/nonlinear_simulations_v2} \\
      (a) & (b)
    \end{tabular}
    \caption{Comparisons of theoretical predictions to numerical
      behavior.  Blue dots correspond to the root mean-squared error
      (RMSE) of the \ERIC~procedure for a given ensemble.  The green
      dotted line corresponds to the RMSE of an oracle given a fully
      labeled dataset of size $\numobs$, whereas the black dotted line
      corresponds to the performance of a method that uses only the
      $\lnumobs$ labeled samples.  (a) Plots for
      ensemble~\eqref{EqnEnsembleOne}: exploiting the helper covariate
      is always beneficial. Consistent with the theoretical
      prediction~\eqref{EqnGuaranteeOne}, the \ERIC~procedure always
      has lower RMSE than the naive method, and it converges to oracle
      performance as $\lambda \rightarrow 1$.  (b) Plots for
      ensemble~\eqref{EqnEnsembleTwo}: helper covariate can be
      harmful. Consistent with the theoretical
      prediction~\eqref{EqnGuaranteeTwo}, there is now an interval of
      $\lambda$ over which the \ERIC~procedure has larger RMSE than
      the naive method; its performance still converges to oracle
      performance as $\lambda \rightarrow 1$.}
    \label{FigNumSim}
  \end{center}
\end{figure}


\subsubsection{When can a ``helper'' be harmful?}
\label{SecEnsembleTwo}

The preceding example was rather benign, in the sense that the
strategy of introducing the helper covariate---while helpful when
$\lambda$ was sufficiently large---was \emph{never} harmful.  In this
section, we exploit the insight afforded by~\Cref{thm:sqloss} to
construct a more challenging ensemble in which there is a transition
between helpful and harmful.  We consider an extension of our previous
ensemble~\eqref{EqnEnsembleOne}, in which the helper covariate is a
higher dimensional vector. It can be partitioned as $\Surrogate = (U,
V)$ for some pair $U \in \real$ and $V \in \real^{\usedimb}$, and the
response is generated as
\begin{align}
\label{EqnEnsembleTwo}
\Response & = \fstar(\Covariate) + \lambda U + \inprod{\alphastar}{V}
+ (1 - \lambda) \, \gnoise,
\end{align}
where $\alphastar \in \real^{\usedimb}$ is some unknown vector.  For
this ensemble, the ideal proxy is given by $\auxstar(\covariate,
\surrogate) = \fstar(\covariate) + \lambda u +
\inprod{\alphastar}{v}$, where $\surrogate = (u, v)$.

As with our earlier ensemble~\eqref{EqnEnsembleOne}, the conditional
variances of the response $\Response$ obeys the
relation~\eqref{EqnHelperVariance}, so that the benefit of the helper
covariate depends on the parameter $\lambda \in [0, 1]$.  In contrast,
however, our new ensemble~\eqref{EqnEnsembleTwo} involves a
potentially high-dimensional helper covariate $\Surrogate = (U, V)$;
it is only the scalar variable $U$ that is potentially helpful, depending
on the value of $\lambda$, whereas the vector $V \in \real^{\usedimb}$
is simply a ``nuisance quantity''.  As we will see, since the
partition of $\Surrogate$ into $U$ and $V$ is unknown to us,
exploiting the helper $W$ can actually be harmful; roughly speaking,
it is harmful whenever the utility of information in $U$ is dominated
by the noise introduced by $V$.  Our theory reveals how this effect
becomes prominent when the nuisance dimension $\usedimb$ is large, and
$\lambda$ is sufficiently small.

We make this intuition precise by returning to the case when
$\fstar(\covariate) = \inprod{\betastar}{\Feat(\covariate)}$ for some
unknown $\betastar \in \real^{\usedima}$ and known feature vector
$\Feat(\covariate) \in \real^{\usedima}$. As shown
in~\Cref{AppEnsembleTwo}, using non-asymptotic bounds from the
paper~\cite{xia2024iv}, when the ideal proxy $\auxstar$ is
estimated via a linear regression, then it can be shown that the final
output $\fhat$ of the \ERIC~procedure has RMSE bounded as
\begin{align}
\label{EqnGuaranteeTwo}
\|\tarhat - \tarstar \|_2 \lesssim \sigma
\sqrt{\frac{\usedima}{\numobs}} + \sigma (1 - \lambda) \Big(
\sqrt{\frac{\usedima}{\lnumobs}} + \frac{\usedima +
  \usedimb}{\lnumobs} \Big).
\end{align}
Thus, we see that---in addition to $\lambda \in [0,1]$---the dimension
$\usedimb$ plays a non-trivial role.  Concretely, consider a problem
for which $\usedimb \approx \frac{\lnumobs}{2}$.  Then the rightmost
term in the bound~\eqref{EqnGuaranteeTwo} contains a term proportional
$\sigma (1 - \lambda)$.  Thus, assuming that the upper bound from our
theory is sharp, it predicts that for $\lambda$ sufficiently close to
zero, the \ERIC~procedure could have performance inferior to the naive
method.

\Cref{FigNumSim}(b) provides confirmation that this prediction is
accurate.  As in~\Cref{SecEnsembleTwo}, we simulated $\numobs = 1000$
samples, partitioned as $|\LabelSet| = \lnumobs = 100$ and
$|\UnlabelSet| = \unumobs = 900$.  In the response
model~\eqref{EqnEnsembleTwo}, we chose $\fstar$ in a similar manner as
above, a polynomial function in dimension $5$ with maximum degree two,
(corresponding to a dimension $\usedima = 21$ in the linearized
formulation); and helper covariate $\Surrogate = (U, V) \in \real
\times \real^{\usedimb}$ with dimension $\usedimb = 50$.  We made
independent draws of the standard covariate $\Covariate \sim
\text{Unif}([0,1]^5)$; first helper component $U \sim \Normal(0,
\sigma^2)$; second helper component $V \sim \text{Unif}([-1,1]^{50})$;
and additive noise variable \mbox{$\gnoise \sim \mathcal{N}(0,
  \sigma^2)$.}  Panel (b) in the figure shows the correspondence
between the theoretically predicted bound~\eqref{EqnGuaranteeTwo}, and
the resulting empirical performance.  Consistent with the prediction,
we again see that the RMSE of the \ERIC~procedure decreases linearly
in $\lambda$, with the important difference that for $\lambda$
sufficiently small (roughly, in the interval $[0, 0.4])$), its
performance is actually worse than the naive procedure.  In this
regime, the utility of component $U$ in the helper covariate is
overwhelmed by the noise introduced by the higher-dimensional
\mbox{$V$-component.}  Overall, we conclude that with high-dimensional
helper covariates, some care is required in when the \ERIC~procedure
should be applied; we offer some practical guidance in the discussion
section.


\subsection{Guarantees for general losses}
\label{SecGeneralLoss}

Thus far, we have provided results that apply to the quadratic loss
function \mbox{$\Loss(\yhat, y) = (\yhat - y)^2$.} In this section, we
turn to an analysis of a more general class of loss functions.  Recall
that the target function $\fstar$ is defined as the minimizer
$\tarstar \defn \arg \min_{\tarplain \in \TarClass} \EE \big[\loss(
  \tarplain(\Covariate), \Response) \big]$.  A portion of our theory
makes no assumptions about the particular form of $\fstar$, whereas a
more refined result applies to a class of GLM-based loss functions for
which this minimizer takes the form $\fstar(\covariate) = \psi \big(
\Exs[\Response \mid \Covariate = \covariate] \big)$ for some function
$\psi$.


\subsubsection{Conditions on loss functions}

We begin by specifying conditions on the loss functions that
underlie our analysis.  The first conditions are relatively standard
in empirical risk minimization
(Lipschitz and convexity), whereas the second condition (loss
compatibility) is more subtle, and highlights an important interplay
between the procedure used to estimate the ideal proxy $\auxstar$,
and the loss function $\Loss$ used to estimate $\fhat$.  We begin
with the former conditions.

\paragraph{Lipschitz and convexity conditions:}
We assume that the loss function satisfies certain Lipschitz and
convexity conditions.  First, we assume that it is
\emph{$\Lip$-Lipschitz} in both arguments, meaning that
\begin{subequations}
\begin{align}
\label{eqn:mreg-lip}
|\loss(\yhatzero, \yzero) - \loss(\yhatone, \yzero)| \leq \Lip
|\yhatzero - \yhatone|, \quad \text{and} \quad |\loss(\yhatzero,
\yzero) - \loss(\yhatzero, \yhatone)| \leq \Lip |\yhatzero - \yhatone|
\end{align}
for all quadruples $(\yhatzero, \yhatone, \yzero, \yone)$.  When $y$
and $\yhat$ are bounded, these conditions hold whenever $\loss$ has
first partial derivatives that are continuous.

Moreover, introducing the shorthand $\PP \loss_{\tarplain, \auxstar}
\defn \EE[ \loss(\tarplain(\Covariate), \auxstar(\Covariate,
  \Surrogate))]$, we also require that the function $\tarplain \mapsto
\PP \loss_{\tarplain, \auxstar}$ is \emph{$\scparam$-strongly convex}
at $\tarstar$, meaning that
\begin{align}
\label{EqnStrongConvexity}  
\PP(\loss_{\tarplain, \auxstar}) - \PP(\loss_{\tarstar, \auxstar})
\geq \frac{\scparam}{2} \| \tarplain - \tarstar \|_2^2, \qquad
\mbox{for all $\tarplain \in \TarClass$.}
\end{align}
\end{subequations}
This condition also holds for various loss functions used in
practice. \\

\medskip

\paragraph{Loss compatibility:}
Recall that the \ERIC~method allows for an arbitrary procedure for
estimating the auxiliary function $\auxstar$.  For a given loss
function $\loss$, we require that this auxiliary function $\auxstar$
is \emph{compatible} in the sense that
\begin{align}
\label{EqnAssumeTarstar}  
\underbrace{\arg \min_{\tarplain \in \TarClass} \EE \big[\loss(
    \tarplain(\Covariate), \Response) \big]}_{\equiv \tarstar} & =
\arg \min_{\tarplain \in \TarClass} \EE \big[\loss(
  \tarplain(\Covariate), \auxstar(\Covariate, \Surrogate)) \big].
\end{align}
For short, we say that the loss function $\Loss$ is
$\auxstar$-compatible when this condition holds.  As we discuss below,
this compatibility condition holds for generalized linear models as
well as a related class of Bregman divergences.  Thus, it includes the
least-squares loss, the logistic loss, and as well as related losses
for logistic, multinomial and exponential regression.

As we discuss in~\Cref{sec:loss-functions}, there are a broad class of
loss functions that are \mbox{$\auxstar$-compatible} with the choice
\begin{subequations}
\begin{align}
  \label{EqnIdeal}
\auxstar(\covariate, \surrogate) = \EE \big[ \Response \mid \Covariate
  = \covariate, \Surrogate = \surrogate \big].
\end{align}
This broader family includes (among others) all loss functions that
can be written in the form
\begin{align}
\label{EqnGenGLM}  
  \Loss(f(\covariate), \response) = -\phi(f(\covariate)) \response +
  \Phi(f(\covariate))
\end{align}
\end{subequations}
for some real-valued functions $\Phi$ and $\phi$.  Special cases of
the losses~\eqref{EqnGenGLM} include those that arise from generalized
linear models (GLMs), which lead to a special case with $\phi(s) = s$.
(See~\Cref{AppLossCompat} for details).  For this reason, we refer to
losses of the form~\eqref{EqnGenGLM} as \emph{GLM-type losses}.  An
important special case is logistic regression, which is based on a loss
function of the form~\eqref{EqnGenGLM} with $\phi(s) = s$ and $\Phi(t)
= \log(1 + e^t)$.  There are also various Bregman losses, including
the binary KL divergence, that can be written in the
form~\eqref{EqnGenGLM}; these and other examples are discussed
in~\Cref{sec:loss-functions}.

\subsubsection{Guarantee for general losses}
\label{SecGeneral}
We are now ready to state a guarantee that applies to any loss
function satisfying the previous conditions. We provide two
guarantees: one for general compatible losses and another for GLM-type
losses.  In the former setting (cf.~\Cref{thm:main}(a)), we study a
variant of the \ERIC~procedure in which we use the pseudo-responses
for the entire dataset---that is, with \mbox{$\Ytil_i \defn
  \auxhat(\Covariate_i, \Surrogate_i)$} for $i = 1, \ldots,
\numobs$. For the GLM-type losses, we study the \ERIC~procedure as
previously described (i.e., using pseudo-responses only for the
unlabeled set).

For this result, our high-order term takes the form
\begin{align}
  \label{EqnHighOrderTwo}
\HighOrder_{\numobs}(\pardelta) \defn \frac{12}{\sqrt{\numobs}} \sqrt{
  \tfrac{\Lip}{\scparam}} \; \sqrt{\log
  \big(\Logfun(\radcrit)/\pardelta \big)} \qquad \text{with} \qquad
\Logfun(\radcrit) \defn \log_2\big(\tfrac{4\fbound}{\radcrit}\big)
\end{align}
In addition to this higher order term, we also remind the reader of
the \emph{oracle accuracy} defined in equation~\eqref{EqnDefnRadcrit},
as well as the empirical norm $\|\cdot\|_\unumobs$ from~\cref{EqnNaiveMeasure}.

\begin{theorem}
\label{thm:main}
Under the conditions of~\Cref{thm:sqloss}, for any compatible,
\mbox{$\Lip$-Lipschitz} and \mbox{$\scparam$-strongly} convex loss,
consider the \ERIC~procedure using auxiliary estimate $\auxhat$.
\begin{enumerate}
\begin{subequations}
\item[(a)] For any compatible loss, it returns an estimate $\tarhat$
  such that
\begin{align}
\label{eqn:slow-rate}
\| \tarhat - \tarstar \|_2 \leq \Big(\tfrac{2 \Lip}{\scparam} + 1
\Big) \radcrit + \sqrt{\tfrac{8 \Lip}{\scparam} \; \PP_{\numobs}
  |\auxhat - \auxstar|} + \HighOrder_{\numobs}(\pardelta) 
\end{align}
with probability at least $1 - \pardelta$.

\item[(b)] For a GLM-type loss, it returns an estimate $\tarhat$ such that
\begin{align}
\label{eqn:faster-rate}
\|\tarhat - \tarstar \|_2 \leq \Big( \tfrac{2\Lip}{\scparam} + 1
\Big) \radcrit + \tfrac{2}{\scparam} \: \GLMbias + \Big(1 +
\sqrt{\tfrac{\Lip}{\scparam}}\Big) \:
\HighOrder_\numobs(\tfrac{\pardelta}{2})
\end{align}
\end{subequations}
with probability at least $1 - \pardelta$.  Here we define the
functions \mbox{$\htil(x) \defn \EE[\auxhat(\Covariate, \Surrogate)
    \mid \Covariate = x]$} and \mbox{$\hstar(x) \defn
  \EE[\auxstar(\Covariate, \Surrogate) \mid \Covariate = x]$.}
\end{enumerate}
\end{theorem}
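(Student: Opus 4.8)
The plan is to run the by-now-standard localized empirical-process analysis of an empirical risk minimizer, but with one extra ingredient -- a ``pseudo-response swap'' replacing the imputed labels $\Ytil_i$ by the ideal proxy $\auxstar(\Covariate_i,\Surrogate_i)$ -- and, for part (b), a more careful version of that swap that exploits the linearity of GLM-type losses in the response to surface the $\Surrogate$-smoothed bias $\GLMbias = \|\htil - \hstar\|_2$ in place of the cruder quantity $\PP_{\numobs}|\auxhat - \auxstar|$. Throughout I write $\widehat{L}_{\numobs}(\tarplain) \defn \frac{1}{\numobs}\sum_{i} \loss(\tarplain(\Covariate_i), \Ytil_i)$ for the empirical pseudo-response risk and $L^{*}_{\numobs}(\tarplain) \defn \frac{1}{\numobs}\sum_{i} \loss(\tarplain(\Covariate_i), \auxstar(\Covariate_i,\Surrogate_i))$ for the ``ideal'' empirical risk.

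For part (a) I would start from the basic inequality $\widehat{L}_{\numobs}(\tarhat) \le \widehat{L}_{\numobs}(\tarstar)$, which is immediate from the definition \eqref{EqnDefnTarHat} of $\tarhat$. Since part (a) uses pseudo-responses on all $\numobs$ samples, the $\Lip$-Lipschitz condition \eqref{eqn:mreg-lip} in the second argument lets me pass to the ideal risk at the price of an additive $2\Lip\,\PP_{\numobs}|\auxhat - \auxstar|$, giving $L^{*}_{\numobs}(\tarhat) - L^{*}_{\numobs}(\tarstar) \le 2\Lip\,\PP_{\numobs}|\auxhat - \auxstar|$; it is important here that the true labels are not used, for otherwise a term $\frac{1}{\numobs}\sum_{i\le\lnumobs}|\Response_i - \auxstar(\Covariate_i,\Surrogate_i)|$ would persist as a genuine bias rather than averaging to zero. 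Next I would control the centered process $\tarplain \mapsto [L^{*}_{\numobs}(\tarplain) - L^{*}_{\numobs}(\tarstar)] - [\PP\loss_{\tarplain,\auxstar} - \PP\loss_{\tarstar,\auxstar}]$: symmetrization together with the Ledoux--Talagrand contraction inequality (applicable since $\loss$ is $\Lip$-Lipschitz) bounds its localized expectation by $2\Lip\,\radcomp(t)$, the definition \eqref{EqnDefnRadcrit} of $\radcrit$ makes $\radcomp(t)/t \le \radcrit/16$ for $t \ge \radcrit$, Talagrand's inequality supplies the tail (producing the higher-order term \eqref{EqnHighOrderTwo}), and a dyadic peeling over radii $2^{k}\radcrit$ removes the dependence on the random radius $\|\tarhat - \tarstar\|_{2}$. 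Finally the compatibility assumption \eqref{EqnAssumeTarstar} and the $\scparam$-strong convexity \eqref{EqnStrongConvexity} at $\tarstar$ give $\frac{\scparam}{2}\|\tarhat-\tarstar\|_{2}^{2} \le \PP\loss_{\tarhat,\auxstar} - \PP\loss_{\tarstar,\auxstar}$; chaining the four displays produces a quadratic inequality of the shape $\frac{\scparam}{2}d^{2} \le 2\Lip\,\PP_{\numobs}|\auxhat - \auxstar| + c\Lip\radcrit\,d + \HighOrder_{\numobs}(\pardelta)\,d$ in $d \defn \|\tarhat-\tarstar\|_{2}$, which when solved (treating the case $d < \radcrit$ separately) yields the three terms of \eqref{eqn:slow-rate}.

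For part (b) only the swap step changes. Writing the GLM-type loss as $\loss(\tarplain(\covariate),\response) = -\phi(\tarplain(\covariate))\,\response + \Phi(\tarplain(\covariate))$, the difference $L^{*}_{\numobs}(\tarhat) - L^{*}_{\numobs}(\tarstar) - [\widehat{L}_{\numobs}(\tarhat) - \widehat{L}_{\numobs}(\tarstar)]$ collapses exactly to $\frac{1}{\numobs}\sum_{i}\big(\phi(\tarhat(\Covariate_i)) - \phi(\tarstar(\Covariate_i))\big)\big(\Ytil_i - \auxstar(\Covariate_i,\Surrogate_i)\big)$. On the labeled block $\Ytil_i - \auxstar(\Covariate_i,\Surrogate_i) = \Response_i - \auxstar(\Covariate_i,\Surrogate_i)$ is $\sigbound$-bounded mean-zero noise; on the unlabeled block I would decompose $\auxhat - \auxstar = (\auxhat - \htil) + (\htil - \hstar) + (\hstar - \auxstar)$, where by construction $\auxhat - \htil$ and $\hstar - \auxstar$ have conditional mean zero given $\Covariate$ (and, for $\auxhat - \htil$, also given $\LabelSet$, which is independent of the unlabeled sample). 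The labeled-noise term and the two conditionally-mean-zero unlabeled terms are multiplier empirical processes indexed by $\tarhat$; localizing, contracting also through the Lipschitz map $\phi$, and applying Talagrand and peeling as above absorbs them into a $\frac{\scparam}{4}d^{2}$ piece on the left plus contributions to $\radcrit$ and to $\HighOrder_{\numobs}(\pardelta/2)$ -- the factor-two split of $\pardelta$ being a union bound over this event and the $L^{*}_{\numobs}$-versus-population event. The remaining ``bias'' term $\frac{1}{\numobs}\sum_{i>\lnumobs}\big(\phi(\tarhat(\Covariate_i)) - \phi(\tarstar(\Covariate_i))\big)\big(\htil(\Covariate_i) - \hstar(\Covariate_i)\big)$ I would bound, for the fixed function $\tarhat$, by Cauchy--Schwarz as $\lesssim \EmpNormUnlab{\tarhat - \tarstar}\cdot\EmpNormUnlab{\htil - \hstar}$, then replace the empirical semi-norms by population norms -- the first via the localized argument and the second by an elementary i.i.d.\ concentration using $\htil - \hstar \perp \{\Covariate_i\}_{i>\lnumobs}$ conditionally on $\LabelSet$ -- arriving at a $\lesssim d\cdot\GLMbias$ contribution, which feeds through the quadratic inequality to the $\frac{2}{\scparam}\GLMbias$ term in \eqref{eqn:faster-rate}.

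The main obstacle is the interplay between the peeling/localization over the random radius $\|\tarhat-\tarstar\|_{2}$ and the strong-convexity step that ``closes the loop'': the Rademacher and Talagrand bounds must be stated uniformly over a continuum of radii with exactly the right linear-in-radius behaviour so that, after invoking \eqref{EqnStrongConvexity}, the resulting quadratic in $d$ can be solved with the stated constants. A second delicate point, specific to part (b), is bookkeeping the randomness of $\auxhat$: since $\auxhat$ enters both the pseudo-responses and (through $\tarhat$) the multipliers, the argument must condition on $\LabelSet$ at the right moments to keep the mean-zero structure of the $\auxhat - \htil$ term while still taking the supremum over $\tarhat$. Finally, the monotonicity condition \eqref{EqnRadeMonotone} is used to compare the critical radii of the labeled-only and unlabeled-only sub-sample empirical processes with $\radcrit = r_{\numobs}$ up to universal constants, and -- as usual for this style of argument -- one passes to the star-hull of $\{\tarplain - \tarstar : \tarplain \in \TarClass\}$ so that $t \mapsto \radcomp(t)/t$ is non-increasing, which is what makes the peeling valid.
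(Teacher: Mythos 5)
Your argument for part~(a) is essentially the paper's own: the same decomposition into (i) the centered process $(\PP - \PP_\numobs)(\loss_{\tarhat,\auxstar} - \loss_{\tarstar,\auxstar})$, handled by symmetrization, contraction, a Talagrand-type bound and peeling, and (ii) the pseudo-response swap $\PP_\numobs(\loss_{\cdot,\auxstar} - \loss_{\cdot,\auxhat})$ bounded by $2\Lip\,\PP_\numobs|\auxhat-\auxstar|$ via the second-argument Lipschitz condition, followed by strong convexity and solving the resulting quadratic; you also correctly flag that part~(a) imputes pseudo-responses on \emph{all} $\numobs$ samples, which is exactly why no residual labeled-noise bias appears. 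For part~(b) your route is correct but organized differently from the paper's. You perform the swap at the \emph{empirical} level, so the GLM structure surfaces the multiplier sum $\frac{1}{\numobs}\sum_i(\phi(\tarhat(\Covariate_i))-\phi(\tarstar(\Covariate_i)))(\Ytil_i-\auxstar(\Covariate_i,\Surrogate_i))$, which you then split into mean-zero multiplier processes (labeled noise, $\auxhat-\htil$, $\hstar-\auxstar$) plus an empirical Cauchy--Schwarz term in $\EmpNormUnlab{\htil-\hstar}$ that must subsequently be concentrated back to the population norm $\GLMbias$; this mirrors the paper's Theorem~1 analysis transplanted to GLM losses. The paper instead keeps the bias extraction entirely at the population level: it uses $\PP\loss_{f,\Response}=\PP\loss_{f,\auxstar}$ to write $\PP\LossDiff_{\auxstar}$ as a sum of two centered empirical processes, $(\PP-\PempLab)\LossDiff_\Response$ and $(\PP-\PempUnlab)\LossDiff_{\auxhat}$ (the latter centered after conditioning on $\LabelSet$), plus the single population quantity $\frac{\unumobs}{\numobs}(\PP\LossDiff_{\auxstar}-\PP\LossDiff_{\auxhat})$, to which the tower property, Cauchy--Schwarz, and the Lipschitz property of $\phi$ apply directly to give $\|\tarhat-\tarstar\|_2\cdot\GLMbias$. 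The paper's version buys you two fewer multiplier processes and no empirical-to-population conversion for $\htil-\hstar$ (hence cleaner constants and a simpler union bound); your version buys nothing extra here, but it is a legitimate proof provided you carry out the additional Bernstein-type concentration of $\EmpNormUnlab{\htil-\hstar}$ and of $\EmpNormUnlab{\tarhat-\tarstar}$ that you sketch.
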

\noindent See Appendices~\ref{sec:proof-main} and~\ref{AppGLM} for the proof of this theorem. \\

\medskip

\noindent We remark that~\Cref{thm:main}(b) holds for the broader
family~\eqref{EqnGenGLM} of GLM-type losses, as long as the function
$\phi$ satisfies some regularity conditions.  This family includes the
binary KL-divergence; see~\Cref{AppExtended} for further details. \\

The terms appearing in the bound~\eqref{eqn:faster-rate} have
analogous interpretations to those appearing in~\Cref{thm:sqloss}.  To
recapitulate briefly, the first term involving $\radcrit$ corresponds
a form of oracle risk, achievable by a procedure given access to
$\numobs$ labeled samples.  The third term is again a higher-order
term, arising from probabilistic fluctuations, whereas the second term
measures the pseudo-response defect via the difference $\GLMbias$.
Note that when the population minimizer takes the form
$\fstar(\covariate) = \Exs[\Response \mid \Covariate =
  \covariate]$---as is the case for the squared loss---then we have
the equivalence $\htil - \hstar = \ftil - \fstar$, consistent
with~\Cref{thm:sqloss}.  As for the bound~\eqref{eqn:slow-rate}, it is
a weaker guarantee, since the difference $\GLMbias$ has been replaced
by $\sqrt{\PP_{\numobs} |\auxhat - \auxstar|}$.  The proof
of~\Cref{thm:main}(b) exploits structure specific to GLM-type losses
so as to obtain the sharper guarantee~\eqref{eqn:faster-rate}.


\subsection{Behavior for binary classification}
\label{SecBinary}

The results in~\Cref{thm:main} validates our algorithm for a broad
class of problems and loss functions, of which one of particular
interest is \emph{binary classification}.  In this context, many
pipelines for large-scale machine learning are based on minimizing the
cross-entropy loss.  Equivalently, they are minimizing the binary
KL-divergence, for which the guarantees of of~\Cref{thm:main}(b) also
hold (see~\Cref{AppExtended} for details).  In this section, we
first show how our theory provides guidance on how the pseudo-labels
should be constructed (\Cref{sec:labeling-comparison}), and how the
\ERIC~procedure can extract useful information when noisy labels are
given as helper covariates (\Cref{sec:noisy-labeling}).


\subsubsection{Is it better to use hard or soft labels?}
\label{sec:labeling-comparison}

For classification with binary labels $\Response \in \{0, 1\}$, many
procedures for generating auxiliary predictors will return an estimate
of the conditional probability $\Prob[\Response = 1 \mid \covariate,
  \surrogate] = \Exs[\Response \mid \covariate, \surrogate]$, so that
$\auxfun(\covariate, \surrogate)$ be a real-valued scalar.  However,
many ``off-the-shelf'' machine learning algorithms for classification
are not equipped to handle continuous probability values.
Consequently, in order to make seamless use of existing pipelines, an
intermediate step of ``labeling'' the response $\auxhat(\covariate,
\surrogate)$---meaning using it to generate a binary label $\Ytil \in
\{0,1 \}$---is required. \\

\noindent Consider the following two approaches for generating such
binary labels:
\begin{cdesc}
\item[Hard labels:] Use $\auxhat$ to approximate the Bayes-optimal
  classifier---that is, generating
  \begin{align}
    \label{EqnHardLabel}
    \ResponseTil = \mathbf{1} \big[\auxhat(\covariate, \surrogate)
      \geq 1/2 \big] \defn \begin{cases} 1 & \mbox{if
        $\auxhat(\covariate, \surrogate) \geq 1/2$} \\ 0 &
      \mbox{otherwise.}
    \end{cases}
  \end{align}
\item[Stochastic soft labels:] Draw $\ResponseTil$ according to the
  Bernoulli distribution \mbox{$\ResponseTil \sim
    \text{Ber}(\auxhat(\covariate, \surrogate))$.}
\end{cdesc}
Hard labeling is natural approach for a practitioner.  However, as our
theory shows---and we illustrate here with a simple ensemble---the
hard labeling approach introduces bias into the overall procedure, and
hence mis-calibration in the final output $\fhat$ of the
\ERIC~procedure.

To illustrate this phenomenon, consider triples $(\Covariate,
\Surrogate, \Response)$ produced according the following procedure.
Given a covariate vector $\Covariate \in \RR^d$, we generate
\begin{subequations}
\begin{align}
\label{EqnHardSoftModel}
\Surrogate \sim \text{Ber}(\Wprob(\Covariate)), \qquad Z \sim
\text{Ber}(\Zprob(\Covariate)), \quad \text{and} \quad \Response = Z
\cdot \Surrogate.
\end{align}
Here the Bernoulli variable $Z$ is unobserved, and we have the
conditional independence relation $\Surrogate \indep Z \mid
\Covariate$.  If we use the binary KL loss, we can compute that the
population minimizer $\tarstar(\covariate) = \Exs[\Response \mid
  \covariate]$ and ideal proxy $\auxstar(\covariate, \surrogate) =
\Exs[\Response \mid \covariate, \surrogate]$ take the form
$\tarstar(\covariate) = \Wprob(\covariate) \cdot \Zprob(\covariate)$
and $\auxstar(\covariate, \surrogate) = \surrogate \cdot
\Zprob(\covariate)$.

In order to compare the behavior of the hard labeling and soft
labeling procedures, consider the following thought experiment:
suppose that we had access to the ideal proxy $\auxstar$.  We could
then generate either the hard labels $\HardY$ or the soft labels
$\SoftY$, as described above, using $\auxstar$.  The
bound~\eqref{eqn:faster-rate} from~\Cref{thm:main}(b) applies in
either case, with the bound for $\HardY$ and $\SoftY$ differing only
in the term\footnote{In the set-up given here, we have $\htil - \hstar
= \ftil - \fstar$.}  $\|\ftil - \fstar\|_2$, where
\begin{align*}
\fhard(\covariate) \defn \EE[\HardY \mid \Covariate = \covariate] =
\PP(\auxstar(\Covariate, \Surrogate) \geq \tfrac{1}{2} \mid \Covariate
= \covariate) = \Wprob(\covariate) \cdot \mathbf{1}(\Zprob(\covariate)
\geq \tfrac{1}{2})
\end{align*}
and
\begin{align*}
\fsoft(\covariate) \defn \EE[\SoftY \mid \Covariate =
  \covariate] = \EE[\EE[\SoftY \mid \Covariate, \Surrogate] \mid
  \Covariate = \covariate] = \Wprob(\covariate) \cdot
\Zprob(\covariate).
\end{align*}
Consequently, we see that when using the ideal proxy $\auxstar$, the
soft labels yield a perfectly calibrated $\ftil$---that is, $\fsoft
\equiv \fstar$---whereas using the hard labels yields a mis-calibrated
$\ftil$, since
\begin{align}
\label{EqnHardSoftBias}  
  \fhard(\covariate) - \fstar(\covariate) & = \Wprob(\covariate) \Big
  \{ \mathbf{1}(\Zprob(\covariate) \geq \tfrac{1}{2}) -
  \Zprob(\covariate) \Big \}.
\end{align}
This mis-calibration will affect the accuracy of the \ERIC~procedure
when using the hard labels.

In order to illustrate this phenomenon, we simulated from the
model~\eqref{EqnHardSoftModel} with covariate vectors $\Covariate \sim
\text{Unif}([0, 1]^5)$, and the choices
\begin{align}
  \label{EqnHardSoftChoice}
\Zprob(\covariate) = \sigmoid(\nu \langle \paramstar, \covariate
\rangle) \; \in \; [0,1], \quad \text{and} \quad \Wprob(\covariate) =
1 - 1.8|\sigmoid( \nu \langle \paramstar, \covariate \rangle) -
\tfrac{1}{2}| \: \, \in \: \, [0,1],
\end{align}
\end{subequations}
where $\sigmoid(t) = \tfrac{e^t}{1+e^{t}}$ is the sigmoid function.
The rationale for these choices of the functions $\Zprob$ and $\Wprob$
is to provide a varying amount of mis-calibration for the hard labels,
depending on the value of the parameter $\nu \geq 0$.  For values of
$\nu \approx 0$, we have $\Zprob(\covariate) \approx 1/2$, so that the
difference $\mathbf{1}(\Zprob(\covariate) \geq \tfrac{1}{2}) -
\Zprob(\covariate)$ is quite large.  At the same time, from the
definition of $\Wprob$, we see that $\Wprob(\covariate) \approx 1$
when $\nu \approx 0$.  By inspection of the mis-calibration
equation~\eqref{EqnHardSoftBias}, we see that these two properties in
conjunction mean that the mis-calibration is large when $\nu \approx
0$.  On the other hand, similar reasoning shows that when $\nu$ is
relatively large, then $\Wprob(\covariate) \approx 0$, so that the
mis-calibration is relatively small.

In order to verify these predictions, we simulated from this ensemble
with $\numobs = 1000$ samples, partitioned as $|\LabelSet| = \lnumobs
= 200$ and $|\UnlabelSet| = \unumobs = 800$. Both the training of
$\auxhat$ and $\tarhat$ were performed using random forest classifiers
with the cross-entropy loss.  As shown in~\Cref{FigNumSimClass}(a),
the behavior confirms what our theory predicts: as $\nu$ shrinks, the
error $\|\fhat - \fstar\|_2$ of the \ERIC~procedures when using hard
versus soft labels grows considerably.

\begin{figure}[h]
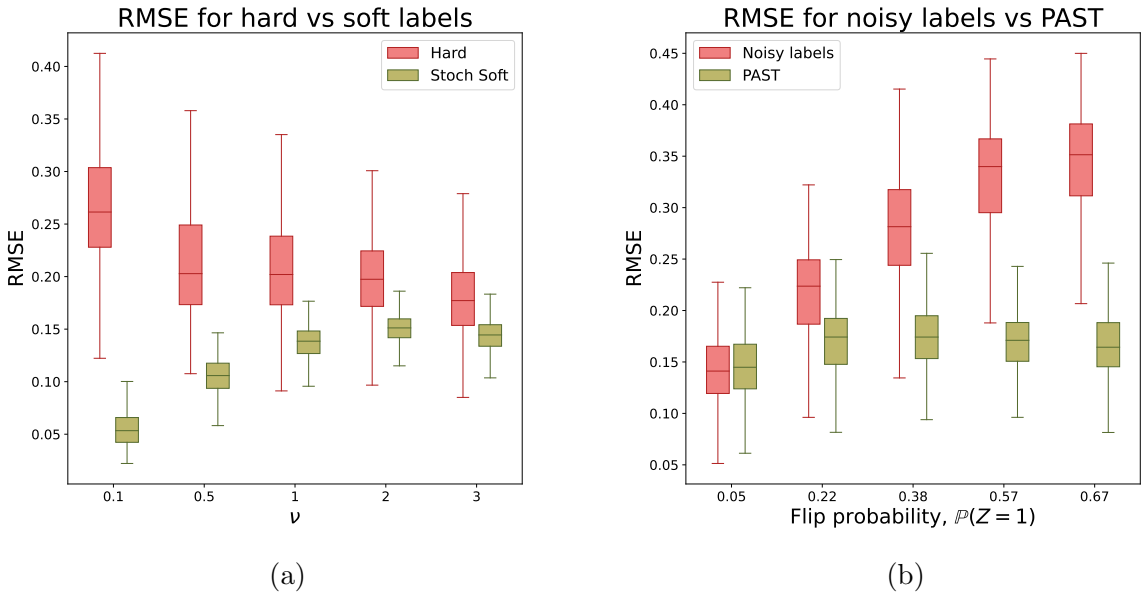

\begin{center}
\begin{tabular}{cc}
  \widgraph{0.5\textwidth}{\figdir/RMSE_boxplot} &
  \widgraph{0.5\textwidth}{\figdir/RMSE_flip_boxplot} \\
  (a) & (b)
\end{tabular} 
\caption{Verification of qualitative predictions for the
  \ERIC~procedure for binary classification.  settings. (a) Effect of
  using hard versus soft labels for the
  ensemble~\eqref{EqnHardSoftModel} with the
  choices~\eqref{EqnHardSoftChoice}.  Plots of the error $\|\ftil -
  \fstar\|_2$ for the \ERIC~procedure based on hard labels (red)
  versus soft labels (green).  Consistent with theory, use of
  stochastic soft labels results in a better calibrated estimate, with
  the greatest difference for $\nu \approx 0$ and shrinking
  differences as $\nu$ increases.  (b) Behavior of the \ERIC~procedure
  and direct classification approaches for the noisy label
  ensemble~\eqref{EqnNoisyLabelModel} with the
  choices~\eqref{EqnNoisyLabelChoices}.  Plots of the error $\|\ftil -
  \fstar\|_2$ for the \ERIC~procedure using the noisy labels as
  surrogates (green) versus the direct classification approach (red).
  Both procedures perform well when $\nu \approx 0$ so that $\Prob[Z =
    1] \approx 0$, but the direct classification degrades as $\nu$
  grows, while the performance of the \ERIC~procedure remains
  roughly constant.}
\label{FigNumSimClass}
\end{center}
\end{figure}

It should be emphasized that our theory---in providing guarantees on
the difference $\| \fhat - \fstar \|_2$---is focused on calibrated
classification.  We remark that if calibration is not the desired
objective, it is possible that using hard labels could lead to a
mis-calibrated classifier whose accuracy is superior than a calibrated
approach.  This phenomenon is observed to a small extent in some
real-world data described in the following section.


\subsubsection{Incorporating noisy labels}
\label{sec:noisy-labeling}

As discussed in the introduction, in many real-world settings, while
we may lack to high-quality labels $\Response \in \{0, 1\}$, we do
have access to low-quality or synthetic labels $\Surrogate \in \{0,
1\}$ of some kind. Various approaches to this problem have been
proposed, including methods based on weak
supervision~\cite{ratner2016data, ratner2017snorkel,
  robinson2020strength}, or methods that try to build in robustness to
noisy labels~\cite{natarajan2013learning, song2022learning}. All of
these methods involve considerable overhead, so in practice, when
the labels are not overly noisy, a standard approach is to treat them
as true responses for learning some classifier.  In this section, we
show that the \ERIC~procedure automatically makes use of noisy labels
in a seamless way, handling both the extremes of high accuracy and high
noise in an automated manner.  In particular, by construction, the
\ERIC~procedure can extract whatever statistical information lies in
the labels $\Surrogate$, even if they are highly noisy (for example,
always flipped relative to $\Response$). \\

As a concrete instance of a ``noisy label'' problem, consider triples
$(\Covariate, \Surrogate, \Response)$ that are produced with
$\Covariate \in \RR^d$, and
\begin{subequations}
\begin{align}
\label{EqnNoisyLabelModel}  
\Response \sim \text{Ber}(\Yprob(\Covariate)), \qquad Z \sim
\text{Ber}(\Zprob(\Covariate)), \quad \text{and} \quad \Surrogate =
\Response \oplus Z,
\end{align}
where $\oplus$ denotes the XOR operation (or addition mod 2).  As
before, the random variable $Z$ is unobserved, and satisfies the
conditional independence relation $\Surrogate \indep Z \mid
\Covariate$.  Note that that $Z \in \{0,1\}$ is an indicator for
whether or not the label $\Response$ is flipped in producing the noisy
label $\Surrogate$.

With these choices, we have $\fstar(\covariate) = \Exs[\Response \mid
  \Covariate = \covariate] = \Yprob(\covariate)$, which is properly
targeted by the \ERIC~procedure with $\Surrogate$ as the helper
covariate.  Suppose, instead, that we treated the helper covariates
$\Surrogate$ as true labels; such a procedure would be targeting the
function $\EE[\Surrogate \mid \Covariate = \covariate] =
\Yprob(\covariate)(1 - \Zprob(\covariate)) + (1 - \Yprob(\covariate))
\Zprob(\covariate)$.  We can compute that the resulting bias takes the
form
\begin{align}
\EE[\Response \mid \Covariate = \covariate] - \EE[\Surrogate \mid
  \Covariate = \covariate] = 2 \Zprob(\covariate) \big \{ \tfrac{1}{2}
- \Yprob(\covariate) \big \}.
\end{align}
This bias term is small as long as the flip probability
$\Zprob(\covariate)$ is small, but it can become large whenever
$\Yprob(\covariate)$ differs significantly from $1/2$, and
$\Zprob(\covariate)$ is not too small. \\

In order to exhibit the effect of this bias in practice, we drew
samples from the model~\eqref{EqnNoisyLabelModel} with $\Covariate
\sim \text{Unif}[0, 1]^5$, and the choices
\begin{align}
  \label{EqnNoisyLabelChoices}
\Yprob(\covariate) = \sigmoid( \langle \paramstar, \covariate \rangle)
\; \in \; [0,1], \quad \text{and} \quad \Zprob(\covariate) =
1.8|\sigmoid( \nu \langle \paramstar, \covariate \rangle) -
\tfrac{1}{2}| \: \, \in \: \, [0,1],
\end{align}
\end{subequations}
where $\sigmoid(t) = e^t/(1 + e^t)$ is the sigmoid function.  Here the
scalar parameter $\nu \geq 0$ controls the family of flip
probabilities $\Zprob(\covariate)$.  When $\nu \approx 0$, we
typically have $\Zprob(\covariate) \approx 0$, so that the helpers
$\Surrogate$ are directly informative of $\Response$.  As $\nu$
increases, then the probabilities $\Zprob(\covariate)$ spread out over
the range $[0, 0.9]$, depending on the underlying covariate.  A
procedure that treats $\Surrogate$ as true labels will decay as the
average missing probability $\Prob[Z = 1] =
\Exs_\Covariate[\Zprob(\Covariate)]$ approaches $1/2$, whereas the
\ERIC~procedure--- since it computes pseudo-labels based on the pair
$(\Surrogate, \Covariate)$---has the ability to learn which pairs
$(\surrogate, \covariate)$ are informative of the true label.
Consequently, we expect that the performance of the direct
classification procedure should degrade as we increase $\nu
\rightarrow +\infty$, whereas that of the \ERIC~procedure should be
roughly invariant.

In order to assess this prediction, we simulated this ensemble with
$\numobs = 1000$ samples, partitioned as $|\LabelSet| = \lnumobs =
200$ and $|\UnlabelSet| = \unumobs = 800$. Both the training of
$\auxhat$ and $\tarhat$ were performed using random forest classifiers
with cross-entropy loss.  We conducted experiments for $\nu \in \{
0.1, 0.5, 1, 2, 3 \}$, but plot results in terms of the missing
probability $\Prob[Z = 1]$.  As shown in~\Cref{FigNumSimClass}(b),
consistent with our theory, the \ERIC~procedure has error that remains
roughly invariant to the choice of $\nu$, whereas the direct
classification approach exhibits significantly growing error.


\section{Empirical results}
\label{sec:empirical}

We now turn to some real-world applications of the \ERIC~procedure, in
particular tackling four different prediction problems, drawn from a
diverse range of applications, for which: (a) it is natural to expect
that covariates may be observed without associated responses; and (b)
there is a natural notion of a helper covariate.  In all cases, we
implement both the \ERIC~procedure (cf. Algorithm 1), and compare it
to the ``naive'' method that involves training only the labeled
dataset $\LabelSet$; moreover, we use empirical risk minimization over
the same function class $\Fclass$ for both the \ERIC~method and the
naive method.\footnote{To be clear, our naive method is truly naive,
in that it ignores the unlabeled data; one could imagine applying
other more sophisticated methods, but this is beyond the scope of the
current paper.}

In~\Cref{sec:BRFSS}, we study the problem of forecasting societal ills
(e.g., alcoholism, drug addiction etc.) within cities. The data is
gathered through survey sampling, leading to issues with coverage and
missing responses. In~\Cref{sec:MIMIC}, we seek to classify whether or
not an individual will suffer a cardiovascular emergency within 6
months following a heart attack. One major problem within the medical
machine learning community is a lack of access to quality labels;
acquiring responses often entails convincing and paying medical
professionals to label data. In~\Cref{sec:NLSY}, we study the problem
of predicting an individual's income based on features collected
during their high school years.  In longitudinal data of this type,
responses are frequently missing due to the effect of
drop-out. Finally, in~\Cref{sec:MIMIC-CXR}, we seek to diagnose a
given patient with pneumonia based on a chest X-ray, where
high-quality labels are also difficult or costly to acquire.  This
challenge has led many researchers to develop machine learning methods
that use natural language processing (NLP) models to construct
synthetic labels based on electronic health records.  Such synthetic
labels can also be incorporated within our procedure as helpers;
recall our discussion from~\Cref{sec:noisy-labeling} on the ``noisy
label'' setting.


\subsection{Forecasting societal ills in communities}
\label{sec:BRFSS}

Societal ills such as alcoholism or addiction are challenging.  While
they can be reduced by suitable policy interventions, the resources
required to implement any policy need to be counterbalanced against
other societally beneficial uses of the same resources. In this
context, tools for forecasting the rates of societal ills are
extremely valuable to policy-makers.  Here the forecasting is of a
longitudinal nature: we seek the predict the fraction $Y \in [0,1]$ of
people exhibiting a certain trait (e.g., alcoholism) at a future time.
The standard covariates $\Covariate$ are features available at predict
time, whereas the helper covariates $\Surrogate$ might be the same
features available at a future time (but not available at predict
time).

As a concrete instantiation of such a set-up, we acquired data from
the 2010 American Community Survey (ACS)~\cite{ACS2010}; it contains
various types of demographic data for $\numobs = 461$ cities.  We
considered three separate prediction tasks, each distinguished by the
type of societal ill to be predicted; the responses for each of the
three tasks is the fraction $Y \in [0,1]$ in 2020 of individuals who
exhibit traits of $\{$alcoholism, smoking, obesity$\}$.  We obtained
the response data from the Center for Disease Control's (CDC)
Behavioral Risk Factor Surveillance System (BRFSS)~\cite{BRFSS2021};
it is a large scale survey conducted with the goal of gauging the
overall state of health within the United States. In particular, we
used the dataset ``Selected Metropolitan/Micropolitan Area Risk
Trends'' (SMART), which provides data for around 120 different cities.
We chose as the helper covariate $\Surrogate$ the $5$-year American
Community Survey taken between 2017-2021~\cite{ACS2021}. The ACS data
is accessed via the IPUMS National Historical Geographic Information
System (NHGIS)~\cite{censusdata}. \\

\noindent Summarizing the set-up:
\begin{cdesc}
\item[Sample sizes:] We have $\lnumobs = 91$ labeled samples, and
  $\unumobs = 371$ unlabeled samples.
\item[Responses:] The scalar $\Response \in [0, 1]$ indicates the
  fraction of people within a city in the survey that exhibit traits
  of alcoholism, smoking, or obesity

\item[Standard covariates:] The covariate vector $\Covariate \in
  \RR^{112}$ consists of different measurements pulled from the 2010
  ACS survey, including demographic information (e.g., age, gender,
  race) and other economic indicators (e.g., median income, industry
  of employment).

\item[Helper covariates:] The helper covariate $\Surrogate \in
  \RR^{112}$ consists of the same features used to form $\Covariate$,
  except taken from the more recent 2017-2022 ACS survey.

\item[Function class $\Fclass$:] Random forest regression with
  hyperparameters selected via cross-validation.

\item[Pseudo-responses:] Generated by random forest regression with
  hyperparameters selected via cross-validation.
\end{cdesc}

Table~\ref{table:BRFSS} describes the results, in particular providing
the $R^2$-values obtained by the \ERIC~method and the naive method for
each of the $3$ different responses.  Note that the \ERIC~method leads
to substantial improvements in accuracy on the test set; the increases
are of the order $50\%$ for prediction of alcoholism and overweight
rates, and a smaller but still nontrivial increase for predicting
smoking rates.

\begin{table*}[t]
\caption{Forecasting societal ills empirical results ($R^2$)}
\label{table:BRFSS}
\begin{center}
\begin{tabular}{ l @{\qquad \qquad \qquad} c @{\qquad \qquad} c}
\hline \hline & Naive & \ERIC \\ \hline Alcoholism & $0.147$ &
$0.216$ \\ Smoking & $0.586$ & $0.625$ \\ Overweight & $0.250$ &
$0.322$ \\ \hline \hline
\end{tabular}
\end{center}
\end{table*}


\subsection{Cardiovascular risk after heart attacks}
\label{sec:MIMIC}

Heart attacks are caused by plague buildup in coronary arteries, which
then restricts the flow of blood and oxygen to the heart. In this
context, we studied the following binary classification problem: how
to predict whether or not a patient, upon having suffered a myocardial
infarction\footnote{I.e., a heart attack} (MI), will return to the
emergency room (ER) within 6 months for a cardiovascular emergency?
Patients who suffer from an MI are known to have an elevated risk of a
cardiovascular-related emergency over the following 6--12
months~\cite{10.1093/eurheartj/ehu505}; consequently, an accurate
classifier for our task can be used to identify those patients who are
most likely to have such an event post-MI.  In order to do so, we used
data taken from MIMIC-IV dataset .  This
dataset~\cite{johnson2023mimic}, available at
PhysioNet~\cite{PhysioNet}, consists of ``de-identified patient''
records collected from the emergency department and intensive care
unit at the Beth Israel Deaconess Medical Center in Boston, MA. The
dataset contains a number of different features, including as
patients' visits to the ER, their diagnoses and procedures performed,
as well as lab tests and various other medical information. In this
setting, each sample represents a visit to the ER in which an MI was
the primary diagnoses. Upon arriving at an ER where an MI is
diagnosed, usually through an electrocardiogram (ECG), the patient is
prescribed some collection of drugs for treatment, and to prepare them
for (typically) a coronary angioplasty in which a catheter is inserted
and used to open up the blocked artery. The MIMIC-IV dataset is
well-maintained, meaning that there are no missing labels. However,
this desirable property is \emph{not the norm} for medical datasets;
indeed, acquiring labels typically requires extensive data collection
and manual labeling from medical professionals.  So as to emulate this
real-world setting, we constructed our partitioned labeled-unlabeled
datasets by randomly selecting a subset of labels to keep.

\begin{figure}[h!]
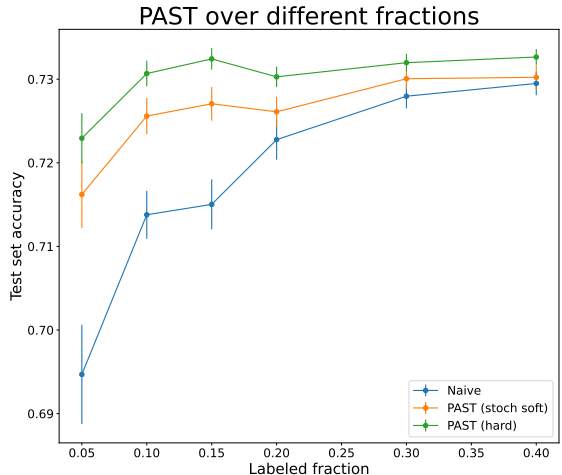

  \begin{center}
    \widgraph{0.55\textwidth}{\figdir/MIMIC_accuracy}
    \caption{Predicting subsequential cardiovascular risk after
      heart attacks.  Plots of the test set accuracy of binary classifiers
      based on training sets with varying fractions of labeled
      responses.  The naive procedure (blue) makes use only of the
      labeled data points, whereas the other two curves are
      instantiations of the \ERIC~procedure with either hard
      labels~\eqref{EqnHardLabel} (green) or stochastic soft labels
      (orange).  For a given fraction of labeled responses, error bars
      are computed by re-running the estimator on training sets
      constructed by randomly choosing the subset of observations to
      be labeled.}
\label{fig:mimic-results}
  \end{center}
\end{figure}

\noindent Summarizing the set-up:
\begin{cdesc}

\item[Sample sizes:] We have a total of $\numobs = 709$ samples, all
  labeled. We present our method by randomly selecting a subset of the
  data whose labels are kept, and the rest are dropped.

\item[Responses:] The binary label $\Response \in \{0, 1\}$ indicates
  whether the given patient suffered a cardiovascular emergency within
  6 months after having experienced a heart attack.

\item[Standard covariates:] The covariate vector $\Covariate \in
  \RR^{32}$ contains features for a given patient (that has suffered a
  heart attack), including age, past medical history, demographic
  information, etc.

\item[Helper covariates:] The surrogate covariate $\Surrogate \in
  \RR^{10}$ contains information about the patient's follow-up
  visit. It includes various indicators for several drugs typically
  prescribed to individuals undergoing cardiac emergencies (e.g.,
  aspirin or heparin).

\item[Function class $\Fclass$:] Random forest classification with
  hyperparameters selected via cross-validation.

\item[Pseudo-responses:] Generated by random forest classification
  with hyperparameters selected via cross-validation, and run for both
  stochastic soft and hard labeling.
\end{cdesc}

Since the entire dataset is labeled, we constructed random partitions
of labeled-unlabeled data points as follows.  For each fraction $p \in
\{0.05, 0.10, \ldots, 0.40 \}$, we chose uniformly at random a
fraction $p$ of the samples to be labeled, and removed labels from the
remaining $(1-p)$-fraction.  We then applied the \ERIC~procedure and
the naive procedure to this dataset, and repeated such a trial $T =
1000$ times.  In~\Cref{fig:mimic-results}, we plot the test set
classification accuracy versus the fraction $p$ for both methods.
Each point in this plot corresponds to the test accuracy averaged over
the $1000$ trials, along with corresponding error bars derived from
these trials.  In all cases, we use a random forest (both to generate
pseudo-responses, and to perform the final fit).  We applied the
\ERIC~procedure both with stochastic soft labels (orange curve) and
hard labels (green).  As shown in~\Cref{fig:mimic-results}, either
case leads to gains in classification accuracy for relatively small
values of $p$, with the gains decreasing as $p$ increases.
Interestingly, for this particular problem, the hard labeling
approach---which can lead to mis-calibrated classifiers, as discussed
previously---yields slightly better test accuracy.


\subsection{Educational longitudinal studies}
\label{sec:NLSY}

Next, we turn to a dataset based on a longitudinal survey, in which
measurements are collected from individuals over a period of distinct
time periods.  Missing data often arises due to dropout---that is,
beyond a certain point, individuals become unresponsive to survey
requests.  There is a rich literature on survival analysis, focusing
on estimation under this kind of censoring~\cite{klein2006survival}.
Here we consider an alternative approach, based on identifying a
suitable helper covariate, and then applying the \ERIC~procedure.

Concretely, the National Longitudinal Survey of Youth (NLSY) from the
1997 cohort~\cite{moore2000national} is longitudinal survey of almost
$9000$ men and women; it began in 1997 with follow-up surveys every two years (roughly). The survey consists of questions
about health, family status, education, employment and income, as well
as a variety of other records. We consider the problem of forecasting
an individual's income $\Response \in \real$ in the year 2004; in
order to do so, we make use of a covariate vector $\Covariate \in
\real^{33}$ of various features (e.g., time in high school, GPA,
standardized test scores, family status etc.)  As a helper covariate
$\Surrogate \in \real$, we use their income in the year 2002. \\

\noindent Summarizing the set-up:
\begin{cdesc}

\item[Sample sizes:] We have $\lnumobs = 497$ labeled observations and
  $\unumobs = 2023$ unlabeled observations.

\item[Responses:] The response $\Response \in [0, 100000]$ is the
  income of a given individual in 2004.

\item[Standard covariates:] The covariate vector $\Covariate \in
  \RR^{33}$ consists of various features measured in 1997 (e.g.,
  family information, GPA, standardized test scores, criminal
  activities etc.)

\item[Helper covariates:] For our helper $\Surrogate \in \RR$, we use
  the individual's income in 2002.

\item[Function class $\Fclass$:] Random forest regression with
  hyperparameters selected via cross-validation.

\item[Pseudo-responses:] Generated by random forest regression with
  hyperparameters selected via cross-validation.
\end{cdesc}

For this data, we find that the naive approach yields a model with an
$R^2$ of $4.6\%$, whereas the \ERIC~procedure yields an $R^2$ of
$6.5\%$, which is a non-trivial improvement.  (To clarify, low
$R^2$-values of the order given here are common in the social science
literature~\cite{ozili2023acceptable}.)


\subsection{Identifying pneumonia from chest X-rays}
\label{sec:MIMIC-CXR}

\begin{figure}[t!]
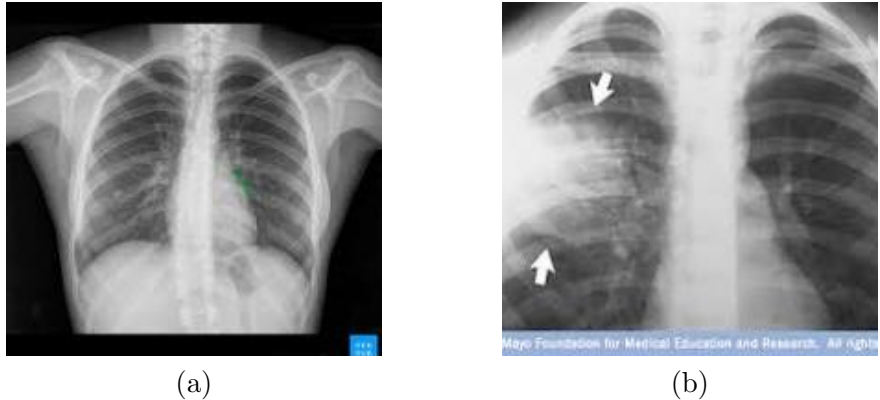

  \begin{center}
    \begin{tabular}{ccc}
      \widgraph{0.32\textwidth}{\figdir/healthy-CXR} & \hspace*{0.3in} &
      \widgraph{0.32\textwidth}{\figdir/pneumonia-CXR} \\
      (a) & & (b)
    \end{tabular}
    \caption{Some examples of chest X-rays. (a) A healthy chest
      X-ray. (b) A chest X-ray of a patient with pneumonia. The arrows
      point to the white spot corresponding to fluid-filled air sacs
      in the lungs, indicative of pneumonia.}
    \label{fig:CXR-examples}
  \end{center}
\end{figure}
Chest X-rays (CXRs) are a widely-used diagnostic tools, used to
identify various lung or heart conditions.  There are approximately 70
million CXRs performed within the United States each
year~\cite{iyeke2022reducing}, so automated procedures for
identifying occurrences of a given health condition based on a CXR could
dramatically improve the overall workflow and efficiency of hospitals.
However, high-quality labeled data is difficult to obtain, since it
requires the time and effort of a radiologist.  In recent years,
researchers have turned towards synthetic labels derived from natural
language processing (NLP) systems~\cite{irvin2019chexpert,
  peng2017negbio}, but these labels are very ``noisy'' relative to the
ground truth.  At the same time, the noisy labels provide a natural
choice of helper covariate for the \ERIC~procedure.

Concretely, we studied the binary classification problem of
predicting, on the basis of a CXR, whether or not the patient has
pneumonia.  It is a lung infection that causes air sacs to fill with
fluid, inhibiting breathing; see~\Cref{fig:CXR-examples} for
comparison of a healthy CXR to one indicating pneumonia.  In order to
do so, we used data taken from the MIMIC-CXR
database~\cite{johnson2019mimiccxr}; it consists of a collection of
CXRs that are each paired with a radiologist report.
As our helper covariate, we made use of synthetic labels from the NegBio
procedure; these are noisy labels, in the sense they had roughly
$8\%$ disagreement with the radiologist's labels.

\noindent Summarizing the set-up:
\begin{cdesc}
\item[Sample sizes:] We have $\lnumobs = 510$ labeled observations,
  and $\unumobs = 10000$ unlabeled observations.
\item[Responses:] $\Response \in \{0, 1\}$ is an indicator for
  whether the patient in the given X-ray has pneumonia or not.
\item[Standard covariates:] $\Covariate \in \RR^{1024}$ are the features
  produced by the foundation model for chest X-rays trained in the
  paper~\cite{cohen2022torchxrayvision}.
\item[Helper covariates:] The helper covariate $\Surrogate \in \{0,
  1\}$ is an indicator derived from the NegBio labels.
\item[Function class $\Fclass$:] $3$-layer neural network fit using
  SGD with the binary entropy loss.
\item[Pseudo-responses:] Generated by logistic regression with
  $\ell_1$-regularization using hard labels.
\end{cdesc}

\begin{figure}[h!]
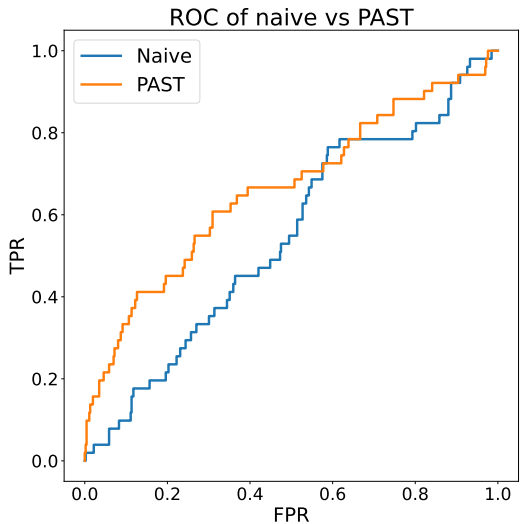

  \begin{center}
    \widgraph{0.5\textwidth}{\figdir/CXR_ROC_plots}
    \caption{ROC curves for the naive (blue) and \ERIC~(orange)
      approaches for identifying pneumonia from a chest X-ray.  True
      positive rate (or power) on the vertical axis versus the false
      positive rate (or Type I error) on the horizontal axis.
      Curves are obtained by varying the threshold used to make
      decisions for a given estimate $\fhat$.}
    \label{fig:mimic-CXR-results}
  \end{center}
\end{figure}

For a given method based on an estimate $\fhat$, we have a family of
binary classifiers, where we declare $\Yhat = 1$ if and only if
$\fhat(\Covariate) \geq \tau$ for some choice of threshold $\tau$.  As
the threshold $\tau$ is varied, we obtain a curve of the true positive
rate (or power) of the decision rule versus its false positive rate
(or Type I error).  These curves, known as ROC curves, are plotted in
\Cref{fig:mimic-CXR-results} for both the naive and \ERIC~methods.
Note that the \ERIC~procedure is superior to the naive procedure for
almost all choices of thresholds.  We can summarize the improvement by
computing the area underneath the ROC curve, a quantity known as the
AUC-ROC.  The naive procedure of only using the labeled data results
in an AUC-ROC equal to $0.55$, whereas using \ERIC~procedure yields an
AUC-ROC equal to $0.66$, a substantial improvement.


\section{Discussion}
\label{sec:conclusion}

In this paper, we have formalized a class of prediction problems with
missing responses, but with the availability of helper covariates.  This
framework includes a broad class of problems, among them forecasting
in time series with future information used as helper covariates;
incorporation of noisy or mis-calibrated predictions from pre-trained
machine learning models; and methods for transfer learning or
distribution shift.

We proposed a simple three-stage meta-procedure, known as the
\ERIC~method.  The first step is to train an auxiliary model $\auxhat$
to predict the response from the helper; second, use it to construct
pseudo-responses; and third, fit the final predictive model $\tarhat$
on the full augmented data set.  We obtain specific instantiations of
this meta-procedure by specifying how to train to auxiliary model, and
the loss function used to assess the quality of the final fit.  On the
theoretical side, we provided explicit and non-asymptotic guarantees
on the excess risk of the final fitted model $\tarhat$;
see~\Cref{thm:sqloss,thm:main}.  For a given loss function, our theory
identifies the notion of an ideal proxy $\auxstar$ that needs to be
well-approximated by the auxiliary estimate $\auxhat$.  Notably, for a
broad class of loss functions, our theory shows that final performance
depends on a $\Surrogate$-smoothed version of the difference $\auxhat
- \auxstar$; in particular, see equation~\eqref{EqnDefnFtil}. Our
theory also identifies an important notion of loss
consistency---between the proxy $\auxstar$ and the ideal predictor
$\fstar$----that needs to be satisfied.  We explored how the
\ERIC~method improves prediction for a variety of problems, ranging
from forecasting of alcoholism to diagnosis of pneumonia.

An advantage of the \ERIC~procedure is its simplicity, meaning that it
is straightforward to incorporate into existing ML pipelines for large-scale
prediction.  At the same time, it is easy to see ways in which its
performance might be improved (albeit with some sacrifice of
ease-of-use).  In particular, rather than simply fitting an auxiliary
model to produce pseudo-responses, we might model the full response
distributed conditioned on the standard-helper covariates, and make
use of the EM framework.  A more sophisticated approach of this type
could exploit, for example, any heterogeneity in the difficulty of
imputing responses as a function of the covariates.  Moreover, in our
current analysis, we have posited an independent form of data
generation, in which samples are generated i.i.d. with missing
responses chosen uniformly at random (also known as missing-completely
at-random).  We suspect that with some additional technical effort, it
could be relaxed to a milder missing-at-random
assumption.\footnote{More precisely, if we let $Z \in \{0,1\}$ be an
indicator of missingness, this assumption corresponds to the
conditional independence relation $\Response \indep Z \mid
(\Covariate, \Surrogate)$.}  For future work, it would be
interesting to consider this and other more general patterns
of missing responses.

Finally, our theory provides bounds that depend on both a form of
oracle risk---meaning the accuracy that could be achieved with a fully
labeled dataset---and a quality measure of the auxiliary fit
$\auxhat$.  We discussed conditions that ensure that the error in the
auxiliary fit is ``small enough'', meaning that the \ERIC~method
achieves the oracle risk up to constant factors.  However, as we
showed in~\Cref{SecEnsembleTwo}, it is possible to construct ensembles
for which the auxiliary error may be dominant, resulting in models
with poorer performance than the naive approach of using only labeled
data.  Thus, an important direction for future work is to develop
automated procedures for detecting such effects.  An obvious approach
is to guide the procedure via estimates of auxiliary error based on
hold-out and/or cross-validation; it would be interesting to develop
and study guided procedures in a more systematic way.

\subsection*{Acknowledgements}
EX was supported by an NSF Graduate Fellowship; in addition, MJW was
partially supported by ONR grant N00014-21-1-2842 and NSF DMS-2311072,
as well as by the Cecil H. Green Chair.



\AtNextBibliography{\small}
\printbibliography




\appendix

\section{Calculations for simple ensembles}
\label{AppEnsemble}

In this appendix, we collect together some simple calculations
that underlie the predictions given in~\Cref{SecEnsemble}.


\subsection{Verifying the prediction~\eqref{EqnGuaranteeOne}}
\label{AppEnsembleOne}

We begin by verifying the prediction~\eqref{EqnGuaranteeOne} for the
ensemble in~\Cref{SecEnsembleOne}.  For this ensemble, the ideal proxy
$\auxstar$ is given by $\auxstar(\covariate, \surrogate) =
\tarstar(\covariate) + \lambda \surrogate$, where
$\tarstar(\covariate) = \inprod{\betastar}{\Psi(\covariate)}$.  Thus,
we can obtain a consistent estimate of $\auxstar$ by performing linear
regression over functions of the form \mbox{$g_{\gamma}(\covariate,
  \surrogate) \defn \inprod{\gamma_1}{\Psi(\covariate)} + \gamma_2
  \surrogate$,} where $\gamma \defn (\gamma_1, \gamma_2) \in
\real^{\usedima + 1}$.

Letting $\gamhat$ denote the vector estimate, it defines the function
estimate $\auxhat \defn g_{\gamhat}$, and we have $\ftil(\covariate)
\defn \Exs \big[g_{\gamhat}(\Covariate, \Surrogate) \mid \Covariate =
  \covariate \big] = \inprod{\gamhat_1}{\Psi(\covariate)}$, using the
fact that $\Exs[\Surrogate \mid \Covariate = \covariate] =
0$. Consequently, recalling that $\fstar(\covariate) =
\inprod{\betastar}{\Psi(\covariate)}$, we have
\begin{align*}
\|\ftil - \tarstar\|_2^2 & = \Exs_{\Covariate} \inprod{\gammahat_1 -
  \betastar}{\Psi(\Covariate)} \leq (\gammahat_1 - \betastar) \CovMat
(\gammahat_1 - \betastar).
\end{align*}
where $\CovMat \defn \Exs[\Psi(\Covariate) \Psi^T(\Covariate)]$.
Finally, from equation~\eqref{EqnHelperVariance}, the procedure
leading to $\auxhat$ (and hence $\ftil$) is a $\usedima$-dimensional
linear regression estimate based on $\lnumobs$ samples, each
contaminated by noise with variance $(1-\lambda)^2 \sigma^2$.
Consequently, standard results on linear regression ensure that
$\|\ftil - \tarstar\|_2 = \|\sqrt{\CovMat} (\gammahat_1 -
\betastar)\|_2 \precsim \sigma (1-\lambda) \, \cdot \,
\sqrt{\frac{\usedima}{\lnumobs}}$.  Applying~\Cref{thm:sqloss} with
this bound together with the oracle accuracy $\radcrit \asymp \sigma
\sqrt{\usedima/\numobs}$ yields the claimed
bound~\eqref{EqnGuaranteeOne}.


\subsection{Verifying the prediction~\eqref{EqnGuaranteeTwo}}
\label{AppEnsembleTwo}

In this case, we obtain a consistent estimate of $\auxstar$ by
performing linear regression with functions of the form
$g_\gamma(\covariate, \surrogate) =
\inprod{\gamma_1}{\Psi(\covariate)} + \inprod{\gamma_2}{u} +
\inprod{\gamma_3}{v}$, where $\surrogate = (u,v)$, and $\gamma =
(\gamma_1, \gamma_2, \gamma_3) \in \real^\usedima \times \real \times
\real^\usedimb$.  Given a vector estimate $\gammahat$, we the function
$\gtil \defn g_{\gammahat}$. As in the calculation
in~\Cref{AppEnsembleTwo}, we have $\ftil(\covariate) =
\inprod{\gammahat_1}{\Psi(\covariate)}$, since $\Exs[\Surrogate \mid
  \Covariate = \covariate] = 0$ by construction.  Similarly, we can
argue as before that $\|\ftil - \tarstar\|_\unumobs \lesssim
\|\gammahat_1 - \betastar\|_2$, where we recall that
$\tarstar(\covariate) = \inprod{\betastar}{\Psi(\covariate)}$.  In
this case, it is less straightforward to bound the error
$\|\gammahat_1 - \betastar\|_2$, since we are simultaneously
estimating another high-dimensional quantity (namely, the vector
$\alphastar \in \real^{\usedimb}$).  However, we can make use on
recent non-asymptotic bounds for instrumental variable methods (see
the paper~\cite{xia2024iv}) to assert that $\|\gammahat_1 -
\betastar\|_2 \lesssim \sigma\ (1 - \lambda) \Big(
\sqrt{\frac{\usedima}{\lnumobs}} + \frac{\usedima +
  \usedimb}{\lnumobs} \Big)$.  The oracle accuracy for estimation of
$\fstar$ scales as $\sigma \sqrt{\usedima/\numobs}$.  Applying the
guarantee from~\Cref{thm:sqloss} with these pieces yields the
claim~\eqref{EqnGuaranteeTwo}.


\section{Loss functions in~\Cref{thm:main}}
\label{sec:loss-functions}

In this appendix, we discuss various loss functions to
which~\Cref{thm:main} applies.

\subsection{Loss function compatibility}
\label{AppLossCompat}

In this section, we characterize a wide range of loss functions that
are compatible with the function \mbox{$\auxstar(\covariate,
  \surrogate) \defn \Exs[\Response \mid (\Covariate, \Surrogate) =
    (\covariate, \surrogate)]$,} which we refer to as the
\emph{standard choice}.  Suppose that for any random variable $Z$, the
function $\hstar$ that minimizes the functional \mbox{$h \mapsto
  \Exs\Loss(h(\Covariate), Z)$} takes the form
\begin{align}
\label{EqnDefnHstar}  
\hstar(\covariate) & = \psi \big( \Exs[Z \mid \Covariate = \covariate]
\big),
\end{align}
where $\psi: \real \rightarrow \real$ is some fixed function.  We
claim that any such loss function is compatible with the standard
$\auxstar$.  Indeed, by applying condition~\eqref{EqnDefnHstar} with
$Z = Y$ and recalling the definition~\eqref{EqnPopulationRisk} of
$\tarstar$, we have the equivalence $\tarstar(\covariate) = \psi \big(
\Exs[Y \mid \Covariate = \covariate] \big)$. Similarly, applying
condition~\eqref{EqnDefnHstar} with $Z = \auxstar(\Covariate,
\Surrogate)$ yields
\begin{align*}
\arg \min_{\tarplain \in \TarClass} \EE \big[\loss(
  \tarplain(\Covariate), \auxstar(\Covariate, \Surrogate)) \big] \; =
\; \psi \big(\Exs[\auxstar(\Covariate, \Surrogate) \mid \Covariate = \covariate] \big)
\; = \; \tarstar(\covariate),
\end{align*}
where the final equality follows by the tower property of conditional
expectation. \\

Thus, it suffices to study loss functions that satisfy
condition~\eqref{EqnDefnHstar} for some $\psi$.  This property holds
for many loss functions; we consider two broad classes here.

\paragraph{Generalized linear models:}

Maximum likelihood using a generalized linear model (with canonical
link) leads to a loss function of the form
\begin{align}
\loss(f(x), y) & = - f(x) y + \Phi(f(x)).
\end{align}
Here $\Phi: \real \rightarrow \real$ is a convex function defined by
the GLM under consideration; it is differentiable with a monotonic
derivative $\Phi'$.  Note that these GLM losses are a special case of
the family~\eqref{EqnGenGLM} with $\phi(s) = s$.

We claim that condition~\eqref{EqnDefnHstar} holds if we define $\psi
\defn (\Phi')^{-1}$ to be the inverse of the derivative $\Phi'$.
Indeed, for any random variable $Z$, we can write
\begin{align*}
\Exs \big[- h(X) Z + \Phi(h(X))] \; = \; \Exs_{\Covariate} \big[ -
  h(X) \Exs[Z \mid X] + \Phi(h(X))].
\end{align*}
Taking derivatives pointwise for each fixed $\covariate$, we find that
the optimal $\hstar$ must satisfy the condition $\Exs[Z \mid
  \Covariate = \covariate] = \Phi'(\hstar(x))$, or equivalently
$\hstar(\covariate) = \psi \big( \Exs[Z \mid \Covariate = \covariate]
\big)$ with $\psi$ chosen as the inverse of $\Phi'$.

Let us consider some standard examples:
\begin{enumerate}
\item[(i)] The function $\Phi(t) = t^2/2$ corresponds to the
  least-squares loss with $\Yspace = \real$.  We have $\Phi'(t) = t$
  and $\psi(s) = s$, so that $\tarstar(\covariate) = \Exs[\Response \mid
    \Covariate = \covariate]$, as in our earlier analysis of
  least-squares.
\item[(ii)] The function $\Phi(t) = \log(1 + e^t)$ corresponds to a
  logistic regression model with \mbox{$\Yspace = \{0,1\}$.}  We have
  $\Phi'(t) = \frac{e^t}{1 + e^t} \in (0,1)$ and $\psi(s) =
  \log(s/(1-s))$ for $s \in (0,1)$, so that
  \begin{align}
\label{EqnLogOdds}    
\tarstar(\covariate) = \log \frac{\Exs[\Response \mid \Covariate =
    \covariate]}{1 - \Exs[\Response \mid \Covariate = \covariate]}.
\end{align}
\item[(c)] The function $\Phi(t) = e^t$ corresponds to a Poisson
  regression model with $\Yspace = \{0, 1, 2, \ldots \}$.  We have
  $\Phi'(t) = e^t$, and hence $\psi(s) = \log(s)$ for $s > 0$, so that
  $\tarstar(x) = \log \Exs[Y \mid \Covariate = \covariate]$.
\end{enumerate}

\paragraph{Bregman losses:}   In addition, there is a broad
class of Bregman losses for which condition~\eqref{EqnDefnHstar} holds
with the identity function $\psi(t) = t$.  These functions are related
to GLM-type losses; in many cases, they correspond to certain kinds of
dual formulations.  Let us consider a few examples:
\begin{enumerate}
\item[(a)] For $f(\covariate), y \in [0,1]$, the \emph{binary
  Kullback-Leibler divergence} is given by
  \begin{align}
\label{EqnBinaryKL}    
    \loss(f(\covariate), y) \defn y \log \frac{y}{f(\covariate)} +
    (1-y) \log \frac{1-y}{1 - f(\covariate)}.
  \end{align}
This objective is dual to the logistic regression loss; the difference
arises depending on whether we set up $\fstar$ as the log-odds
ratio~\eqref{EqnLogOdds}, or as the conditional expectation
$\fstar(\covariate) = \Exs[\Response \mid \Covariate = \covariate]$.
\item[(b)] For $f(\covariate), y > 0$, the Itakura-Saito divergence
  $\Loss(f(\covariate), y) = \frac{y}{f(\covariate)} - \log
  \frac{y}{f(\covariate)} - 1$.
\end{enumerate}
For all of these Bregman divergences, the
condition~\eqref{EqnDefnHstar} holds with $\psi(s) = s$.

\subsection{An extended family of loss functions}
\label{AppExtended}
Recall the form~\eqref{EqnGenGLM} of the generalized GLM loss
function, involving the term $\phi \circ f(\covariate) =
\phi(f(\covariate))$. In order for the guarantee~\Cref{thm:main}(b) to
hold, a careful inspection of our proof reveals that we require only
$\| \phi \circ \tarhat - \phi \circ \tarstar \|_2 \leq \| \tarhat -
\tarstar \|_2$, which is milder than a pointwise Lipschitz condition
on $\phi$.

An important special case is the loss function $\loss(f(\covariate),
\response) = - \response \log(\tfrac{f(\covariate)}{1-f(\covariate)})
- \log(1 - f(\covariate))$.  Risk minimization using this function is
equivalent to using the binary KL-divergence~\eqref{EqnBinaryKL}, also
known as the \emph{cross-entropy loss} in the machine learning
literature. Indeed, many modern machine learning algorithms for
classification involving fitting decision trees or neural networks to
minimize this particular loss.  This loss function is of the
form~\eqref{EqnGenGLM} with $\phi(s) = \log \big( \tfrac{s}{1- s}
\big)$, which is Lipschitz on any interval of the form $[c, 1- c]$ for
$c \in (0, 1/2)$.


\section{Proof of~\Cref{thm:sqloss}}
\label{sec:proof-sqloss}

Our proof involves the (squared) empirical norm $\totempnorm{f}^2 =
\PempTot[f^2(\Covariate)] \defn \frac{1}{\numobs} \sum_{i=1}^\numobs
f^2(\Covariate_i)$ over the full dataset, as well as the empirical
expectation operators $\PempLab(g) \defn \frac{1}{\lnumobs}
\sum_{i=1}^\lnumobs g(\Covariate_i, \Surrogate_i)$ and $\PempUnlab(g)
\defn \frac{1}{\unumobs} \sum_{i=\lnumobs+1}^\numobs g(\Covariate_i,
\Surrogate_i)$ over the $\lnumobs$-sized and $\unumobs$-sized datasets
$\LabelSet$ and $\UnlabelSet$ of labeled and unlabeled samples,
respectively.

\subsection{Main argument}

We begin with a lemma that provides an upper bound on the squared
error $\|\tarhat - \tarstar\|_2^2$ consisting of three terms:
\begin{lemma}
\label{LemDecomposition2}
We have the upper bound $\|\tarhat - \tarstar \|_2^2 \leq \sum_{j=1}^4
\Term_j$, where
\begin{subequations}
\begin{align}
\label{EqnDefnTermOne2}    
\Term_1 & \defn \Big \{ \| \tarhat - \auxstar \|_2^2 -
\totempnorm{\tarhat - \auxstar }^2 \Big \} - \Big \{ \| \tarstar -
\auxstar \|_2^2 - \totempnorm{ \tarstar - \auxstar}^2 \Big \}, \\
\label{EqnDefnTermTwo2}
\Term_2 & \defn \frac{2\lnumobs}{\numobs} \cdot \PempLab\big\{
(\tarhat - \tarstar)(\Response - \auxstar) \big\}, \\
\label{EqnDefnTermThree2}
\Term_3 & \defn \frac{2\unumobs}{\numobs} \cdot \PempUnlab\big\{
(\tarhat - \tarstar)(\auxhat - \ftil + \tarstar - \auxstar) \big\},
\quad \mbox{and}  \\
\label{EqnDefnTermFour2}
\Term_4 & \defn \frac{2\unumobs}{\numobs} \cdot \EmpNormUnlab{\tarhat
  - \tarstar} \cdot \EmpNormUnlab{\ftil - \tarstar}.
\end{align}
\end{subequations}
\end{lemma}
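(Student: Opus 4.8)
The plan is to run the standard basic-inequality argument for least-squares empirical risk minimization, but to carry out the empirical-versus-population comparison \emph{centered at $\auxstar$} rather than at $\tarstar$. This single choice is what makes the residual in the labeled block appear as $\Response - \auxstar$ (the variance-reduced noise) and the pseudo-responses in the unlabeled block appear as $\auxhat - \auxstar$, both at once, and it also forces the population cross-term to vanish.

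First I would invoke optimality of $\tarhat$: since $\tarstar \in \Fclass$ and $\tarhat$ minimizes $f \mapsto \tfrac{1}{\numobs}\sum_{i=1}^\numobs(f(\Covariate_i) - \Ytil_i)^2$, the identity $(a-c)^2 - (b-c)^2 = (a-b)^2 + 2(a-b)(b-c)$ applied with $a = \tarhat(\Covariate_i)$, $b = \tarstar(\Covariate_i)$, $c = \Ytil_i$ gives
\[
\totempnorm{\tarhat - \tarstar}^2 \;\leq\; \frac{2}{\numobs}\sum_{i=1}^\numobs \bigl(\tarhat(\Covariate_i) - \tarstar(\Covariate_i)\bigr)\bigl(\Ytil_i - \tarstar(\Covariate_i)\bigr).
\]
Next I would pass from the empirical to the population norm. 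Writing $\Delta_i \defn \tarhat(\Covariate_i) - \tarstar(\Covariate_i)$ and $\xi_i \defn \tarstar(\Covariate_i) - \auxstar(\Covariate_i,\Surrogate_i)$, note that $\Exs[\xi_i \mid \Covariate_i] = 0$ because $\tarstar(\covariate) = \Exs[\auxstar(\Covariate,\Surrogate) \mid \Covariate = \covariate]$ by~\eqref{EqnDefnFtil}. Expanding $(\tarhat - \auxstar)^2 = (\Delta + \xi)^2$ pointwise, both as an empirical average over the $\numobs$ samples and as a population expectation over a fresh copy of $(\Covariate,\Surrogate)$ (with $\tarhat$ and $\auxhat$ treated as fixed functions), yields
\begin{align*}
\totempnorm{\tarhat - \auxstar}^2 - \totempnorm{\tarstar - \auxstar}^2 &= \totempnorm{\tarhat - \tarstar}^2 + \frac{2}{\numobs}\sum_{i=1}^\numobs \Delta_i \xi_i, \\
\|\tarhat - \auxstar\|_2^2 - \|\tarstar - \auxstar\|_2^2 &= \|\tarhat - \tarstar\|_2^2,
\end{align*}
the second line using $\Exs[\Delta\,\xi] = \Exs[\Delta\,\Exs[\xi \mid \Covariate]] = 0$ since $\Delta$ is a function of $\Covariate$ only. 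Subtracting the second identity from the first and rearranging gives $\|\tarhat - \tarstar\|_2^2 = \Term_1 + \totempnorm{\tarhat - \tarstar}^2 + \tfrac{2}{\numobs}\sum_i \Delta_i\xi_i$.

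Combining this with the basic inequality and observing that $(\Ytil_i - \tarstar(\Covariate_i)) + \xi_i = \Ytil_i - \auxstar(\Covariate_i,\Surrogate_i)$ collapses the two cross-sums into one, leaving $\|\tarhat - \tarstar\|_2^2 \leq \Term_1 + \tfrac{2}{\numobs}\sum_{i=1}^\numobs \Delta_i\bigl(\Ytil_i - \auxstar(\Covariate_i,\Surrogate_i)\bigr)$. I would then split this sum at $i = \lnumobs$. On the labeled block $\Ytil_i = \Response_i$, so that block is exactly $\Term_2$. On the unlabeled block $\Ytil_i = \auxhat(\Covariate_i,\Surrogate_i)$, and I would decompose $\auxhat - \auxstar = (\auxhat - \ftil + \tarstar - \auxstar) + (\ftil - \tarstar)$: the first grouping produces $\Term_3$, while the second is controlled by the Cauchy--Schwarz inequality in the $\unumobs$-sample empirical inner product, $\tfrac{2\unumobs}{\numobs}\PempUnlab\{(\tarhat - \tarstar)(\ftil - \tarstar)\} \leq \tfrac{2\unumobs}{\numobs}\EmpNormUnlab{\tarhat - \tarstar}\,\EmpNormUnlab{\ftil - \tarstar} = \Term_4$. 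Adding the pieces gives $\|\tarhat - \tarstar\|_2^2 \leq \Term_1 + \Term_2 + \Term_3 + \Term_4$, as claimed.

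The argument is essentially bookkeeping, so I do not expect a serious obstacle; the only two steps that require foresight are (i) pivoting the empirical-process difference $\Term_1$ at $\auxstar$, which is precisely what makes $\Exs[\Delta\xi]$ vanish while simultaneously re-centering both the labeled noise and the unlabeled pseudo-responses at $\auxstar$, and (ii) the three-way split of $\auxhat - \auxstar$, arranged so that the conditionally-mean-zero piece $\auxhat - \ftil$ is folded into $\Term_3$ and the benign, Jensen-improved piece $\ftil - \tarstar$ is the only one reaching $\Term_4$ (the term that later yields the $\Surrogate$-smoothed defect in~\eqref{EqnSqLossBound}). The subsequent bounding of $\Term_1$ by a localized Rademacher complexity, of $\Term_2,\Term_3$ by concentration, and the rearrangement that turns the quadratic bound into the stated linear one are then left to the remainder of the proof.
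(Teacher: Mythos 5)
Your proposal is correct and follows essentially the same route as the paper's proof: both pivot the empirical-versus-population comparison at $\auxstar$ so that the population cross-term $\Exs[(\tarhat-\tarstar)(\tarstar-\auxstar)]$ vanishes (yielding $\|\tarhat-\tarstar\|_2^2 = \Term_1 + \totempnorm{\tarhat-\auxstar}^2 - \totempnorm{\tarstar-\auxstar}^2$), then apply the basic inequality to recenter the cross-sum at $\auxstar$, split at $i=\lnumobs$, and peel off $\ftil - \tarstar$ via Cauchy--Schwarz to obtain $\Term_4$. The only difference is notational (your explicit $\Delta,\xi$ expansion versus the paper's inner-product bookkeeping).
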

\noindent
See~\Cref{sec:proof-LemDecomposition2} for a proof of this lemma.

\medskip

With this decomposition in hand, we need to obtain suitable bounds on
each of the four terms.  These bounds, proved in the appendices,
involve both the critical radius $\radcrit$ for $\TarClass$ over the
full dataset, as well as their analogues $\lradcrit$ and $\uradcrit$
defined over the $\lnumobs$-sized and $\unumobs$-sized datasets
$\LabelSet$ and $\UnlabelSet$, respectively.  Moreover, we make use of
the shorthand $\Logfun(t) \defn \log_2\big(\tfrac{4B}{t}\big)$.  Note
that by our assumptions on the data generation process, the
distribution of the pairs $(\Covariate_i, \Surrogate_i)$ remains the
same across $\LabelSet$ and $\UnlabelSet$.

\begin{lemma}[Bound on $\Term_1$]
\label{LemTermOne}
Conditional on $\|\tarhat - \tarstar \|_2 \geq \radcrit$, we have
\begin{align}
\label{EqnTermOneBound}  
\Term_1 \leq 2 \, \|\tarhat - \tarstar \|_2 \Big( \radcrit + 4
\sqrt{\frac{2\log(\Logfun(\radcrit)/\pardelta)}{\numobs}} + \frac{128
  \log(\Logfun(\radcrit)/\pardelta)}{\radcrit \numobs} \Big)
\end{align}
with probability at least $1 - \pardelta$.
\end{lemma}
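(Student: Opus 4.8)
The plan is to recognize $\Term_1$ as a localized, centered empirical process evaluated at the data-dependent minimizer $\tarhat$, and to control it by a standard peeling argument over dyadic shells. Writing $\PP$ for the population expectation over a pair $(\Covariate,\Surrogate)$ and $\PP_{\numobs}$ for the empirical average over the $\numobs$ i.i.d.\ pairs $(\Covariate_i,\Surrogate_i)$, and setting
\begin{align*}
h_{\tarplain}(x,w) \defn \bigl(\tarplain(x)-\auxstar(x,w)\bigr)^2 - \bigl(\tarstar(x)-\auxstar(x,w)\bigr)^2 ,
\end{align*}
a direct expansion of the definition~\eqref{EqnDefnTermOne2} gives $\Term_1 = (\PP-\PP_{\numobs})(h_{\tarhat})$ together with the pointwise identity $h_{\tarplain} = (\tarplain-\tarstar)^2 + 2(\tarplain-\tarstar)(\tarstar-\auxstar)$. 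Since every $\tarplain\in\TarClass$, the target $\tarstar$, and the proxy $\auxstar$ are all bounded by the uniform bound $B$, we have $|\tarplain-\tarstar|\le 2B$ and $|\tarstar-\auxstar|\le 2B$, hence $|h_{\tarplain}|\le 6B\,|\tarplain-\tarstar|$; in particular $\|h_{\tarplain}\|_\infty \lesssim B^2$ and $\PP h_{\tarplain}^2 \lesssim B^2\,\|\tarplain-\tarstar\|_2^2$. These two facts are exactly the inputs we will feed to a concentration inequality.

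\textbf{Peeling and the in-expectation bound.} Because $\|\tarhat-\tarstar\|_2\le 2B$ deterministically and we are conditioning on $\|\tarhat-\tarstar\|_2\ge\radcrit$, the estimate $\tarhat$ lies in one of the dyadic shells $S_m \defn \{\tarplain\in\TarClass : 2^{m-1}\radcrit \le \|\tarplain-\tarstar\|_2 \le 2^m\radcrit\}$ for $m=1,\dots,M$ with $M\le\log_2(4B/\radcrit)=\Logfun(\radcrit)$. Fix a shell and put $t_m\defn 2^m\radcrit$. By symmetrization, $\Exs\bigl[\sup_{\tarplain\in S_m}|(\PP-\PP_{\numobs})(h_{\tarplain})|\bigr]$ is at most twice the empirical Rademacher complexity of $\{h_{\tarplain}:\tarplain\in S_m\}$; applying the Ledoux--Talagrand contraction inequality (in its index-dependent form) to the maps $u\mapsto u^2$, which is $4B$-Lipschitz on $[-2B,2B]$, and $u\mapsto u\cdot(\tarstar-\auxstar)(\Covariate_i,\Surrogate_i)$, which is $2B$-Lipschitz, reduces this to a constant times $B\,\radcomp(t_m)$, where $\radcomp$ is the localized Rademacher complexity of~\eqref{EqnDefnLocalRad} (we discard the lower constraint $\|\tarplain-\tarstar\|_2\ge 2^{m-1}\radcrit$ to get an upper bound). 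Finally, invoking the sub-linearity of $t\mapsto\radcomp(t)/t$ (valid for star-shaped classes; see Chapter~13 of~\cite{dinosaur2019}) together with the fixed-point definition~\eqref{EqnDefnRadcrit}, for $t_m\ge\radcrit$ we get $\radcomp(t_m)\le t_m\radcrit/16$, and hence
\begin{align*}
\Exs\Bigl[\sup_{\tarplain\in S_m}\bigl|(\PP-\PP_{\numobs})(h_{\tarplain})\bigr|\Bigr] \;\lesssim\; t_m\,\radcrit .
\end{align*}

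\textbf{Concentration, union bound, and conclusion.} To pass from expectation to a high-probability statement within each shell, apply a Bousquet-type concentration inequality for the supremum of the empirical process $\{(\PP-\PP_{\numobs})(h_{\tarplain}):\tarplain\in S_m\}$, using the uniform bound $\|h_{\tarplain}\|_\infty\lesssim B^2$ and the variance proxy $\PP h_{\tarplain}^2\lesssim B^2 t_m^2$. This yields, with probability at least $1-\pardelta_m$,
\begin{align*}
\sup_{\tarplain\in S_m}\bigl|(\PP-\PP_{\numobs})(h_{\tarplain})\bigr| \;\lesssim\; t_m\Bigl(\radcrit + \sqrt{\tfrac{\log(1/\pardelta_m)}{\numobs}}\,\Bigr) + \tfrac{\log(1/\pardelta_m)}{\numobs}.
\end{align*}
Choosing $\pardelta_m = \pardelta/M$ and union bounding over the at most $\Logfun(\radcrit)$ shells replaces $\log(1/\pardelta_m)$ by $\log(\Logfun(\radcrit)/\pardelta)$, at total probability cost $\pardelta$. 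On this event, whenever $\|\tarhat-\tarstar\|_2\ge\radcrit$ the minimizer lies in some $S_{m^\star}$, so $t_{m^\star}\le 2\|\tarhat-\tarstar\|_2$ (which produces the prefactor $2$), and the standalone term $\log(\Logfun(\radcrit)/\pardelta)/\numobs$ is absorbed, using $\|\tarhat-\tarstar\|_2\ge\radcrit$, into $\|\tarhat-\tarstar\|_2\cdot\log(\Logfun(\radcrit)/\pardelta)/(\radcrit\numobs)$. Collecting the pieces and tracking the numerical constants produced by symmetrization, contraction, the fixed-point relation, and Bousquet's inequality gives exactly the claimed bound~\eqref{EqnTermOneBound}.

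\textbf{The main obstacle.} The delicate point is not any single estimate but the bookkeeping around the cross term $2(\tarplain-\tarstar)(\tarstar-\auxstar)$. Since $\auxstar$ depends on the pair $(\Covariate,\Surrogate)$ while $\TarClass$ consists of functions of $\Covariate$ alone, one must carry the joint empirical measure over $(\Covariate_i,\Surrogate_i)$ throughout; and although $\tarstar-\auxstar$ is not a mean-zero noise term, one must use its boundedness both to justify the contraction step and, more importantly, to keep the variance of $h_{\tarplain}$ scaling like $\|\tarplain-\tarstar\|_2^2$ rather than like a constant. That $t_m^2$ variance scaling is precisely what turns the Bousquet remainder into the favorable $1/(\radcrit\numobs)$ term rather than a weaker $1/\sqrt{\numobs}$ term, which is what makes the higher-order contribution negligible relative to the leading $\radcrit$ term, as claimed in the discussion following~\Cref{thm:sqloss}.
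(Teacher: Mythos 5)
Your proposal is correct and follows essentially the same route as the paper: the same localized process $h_{\tarplain}=(\tarplain-\auxstar)^2-(\tarstar-\auxstar)^2$, symmetrization plus Ledoux--Talagrand contraction to reduce to $\radcomp(t)$, the fixed-point/monotonicity property to get $\radcomp(t)\le t\radcrit/16$, a Talagrand-type concentration bound with variance proxy of order $t^2$, and peeling over dyadic shells. The only cosmetic difference is that you carry out the union bound over shells inline, whereas the paper first proves the fixed-radius bound and then invokes its packaged peeling lemma (\Cref{lems:peeling}) with the Klein--Rio concentration inequality (\Cref{lems:emp-conc}).
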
 
\noindent See~\Cref{AppProofLemTermOne} for the proof of this
lemma. \\

\begin{lemma}[Bound on $\Term_2$]
\label{LemTermTwo}
Conditional on $\|\tarhat - \tarstar \|_2 \geq \radcrit$, we have
\begin{align}
\label{EqnBoundTermTwo}  
\Term_2 & \leq \|\tarhat - \tarstar\|_2 \cdot \Big( 2\sigbound
\radcrit + 4 \sigbound \sqrt{\frac{2
    \log(\Logfun(\radcrit)/\pardelta)}{\numobs}} \Big) + \frac{32
  \sigbound \log(\Logfun(\radcrit)/\pardelta)}{\numobs},
\end{align}
with probability at least $1 - \pardelta$.
\end{lemma}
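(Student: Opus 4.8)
The plan is to view $\Term_2$ as a single empirical process evaluated at the data-dependent function $\tarhat$, and to bound it by a localized (peeling) argument that parallels the proof of the $\Term_1$ bound in~\Cref{AppProofLemTermOne}. Unpacking the definitions,
\[
\Term_2 \; = \; \frac{2}{\numobs} \sum_{i=1}^{\lnumobs} \big( \tarhat(\Covariate_i) - \tarstar(\Covariate_i) \big) \, \gnoise_i, \qquad \gnoise_i \defn \Response_i - \auxstar(\Covariate_i, \Surrogate_i),
\]
and by the definition~\eqref{EqnIdealProxy} of the ideal proxy, the variables $\gnoise_i$ obey $\Exs[\gnoise_i \mid \Covariate_i, \Surrogate_i] = 0$ and $|\gnoise_i| \leq \sigbound$. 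Since $\tarhat$ depends on the same samples appearing in the sum, I cannot concentrate directly; instead I would bound $G(f) \defn \tfrac{2}{\numobs} \sum_{i=1}^{\lnumobs} (f - \tarstar)(\Covariate_i) \, \gnoise_i$ uniformly over $f$ in a localized neighborhood of $\tarstar$ inside $\TarClass$, and then specialize to $f = \tarhat$.

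The second step is the peeling. For $m \geq 0$ let $t_m \defn 2^m \radcrit$ and $\mathcal{S}_m \defn \{ f \in \TarClass : t_m \leq \| f - \tarstar \|_2 \leq t_{m+1} \}$; since $\TarClass$ (and hence $\tarstar$) is uniformly bounded, $\| f - \tarstar \|_2$ is bounded by an absolute constant, so only $\lesssim \Logfun(\radcrit)$ shells are nonempty. Conditioning on the event $\| \tarhat - \tarstar \|_2 \geq \radcrit$ places $\tarhat$ in exactly one shell $\mathcal{S}_m$, in which $t_{m+1} \leq 2 \| \tarhat - \tarstar \|_2$; a union bound over the $\lesssim \Logfun(\radcrit)$ shells replaces $\pardelta$ by $\pardelta / \Logfun(\radcrit)$, which is the origin of the $\log(\Logfun(\radcrit)/\pardelta)$ factors in~\eqref{EqnBoundTermTwo}.

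Within a fixed shell I would control $Z_m \defn \sup_{f \in \mathcal{S}_m} |G(f)|$ in two stages. \emph{(i) Expectation.} Symmetrizing introduces Rademacher variables $\rade_i$, and applying the Ledoux--Talagrand contraction inequality to the maps $u \mapsto u \gnoise_i / \sigbound$ (which are $1$-Lipschitz and vanish at $0$) strips off the multipliers, leaving $\Exs[Z_m] \lesssim \sigbound \cdot \tfrac{\lnumobs}{\numobs} \, \radcomp_{\lnumobs}(t_{m+1})$, where $\radcomp_{\lnumobs}$ is the $\lnumobs$-sample analogue of the localized Rademacher complexity~\eqref{EqnDefnLocalRad}. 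Star-shapedness makes $r \mapsto \radcomp_{\lnumobs}(r)/r$ nonincreasing, so the fixed-point relation defining $\lradcrit$ gives $\radcomp_{\lnumobs}(t_{m+1}) \lesssim t_{m+1} \lradcrit$; the monotonicity condition~\eqref{EqnRadeMonotone}, applied with sample sizes $\lnumobs < \numobs$, then converts $\tfrac{\lnumobs}{\numobs} \lradcrit$ into a constant multiple of $\radcrit$, yielding $\Exs[Z_m] \lesssim \sigbound \, t_{m+1} \, \radcrit \lesssim \sigbound \, \| \tarhat - \tarstar \|_2 \, \radcrit$ on the shell containing $\tarhat$. \emph{(ii) Deviation.} On $\mathcal{S}_m$ the summands $(f - \tarstar)(\Covariate_i) \gnoise_i$ are bounded by $2\sigbound$ and have second moment at most $\sigbound^2 t_{m+1}^2$, so Bousquet's form of Talagrand's inequality (equivalently, the bounded-differences plus self-bounding route used for $\Term_1$) gives, with probability $1 - \pardelta/\Logfun(\radcrit)$, that $Z_m$ exceeds its mean by at most $2\sigbound t_{m+1} \sqrt{2\log(\Logfun(\radcrit)/\pardelta)/\numobs}$ plus a lower-order term of size $\lesssim \sigbound \log(\Logfun(\radcrit)/\pardelta)/\numobs$ (here $\sqrt{\lnumobs}/\numobs \leq 1/\sqrt{\numobs}$ produces the $\sqrt{\cdot/\numobs}$ scaling). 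Combining (i) and (ii) on the shell containing $\tarhat$, and using $t_{m+1} \leq 2\| \tarhat - \tarstar \|_2$, gives the bound~\eqref{EqnBoundTermTwo} after absorbing universal constants.

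The main obstacle is the bookkeeping that keeps the \emph{full-sample} radius $\radcrit$ (rather than the labeled-sample radius $\lradcrit$, which the symmetrization step naturally produces) in the final bound: this is exactly the point where the monotonicity assumption~\eqref{EqnRadeMonotone} and the $\lnumobs/\numobs$ prefactor must be handled together, and in the right order. Beyond that, the argument is a standard localization, and the explicit constants --- which the paper does not try to optimize --- follow by inserting standard quantitative forms of the contraction and concentration inequalities.
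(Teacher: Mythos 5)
Your proposal is correct and follows essentially the same route as the paper: both bound $\Term_2$ by a localized supremum of the multiplier process $\sum_i \noise_i (f-\tarstar)(\Covariate_i)$ over the labeled samples, control its mean by symmetrization plus Ledoux--Talagrand contraction and its deviations by a Talagrand-type concentration inequality, handle the random radius $\|\tarhat-\tarstar\|_2$ by peeling, and convert the labeled-sample radius $\lradcrit$ to $\radcrit$ via the monotonicity condition~\eqref{EqnRadeMonotone} together with the $\lnumobs/\numobs$ prefactor. The only differences are presentational (explicit dyadic shells versus the paper's abstract peeling lemma, and Bousquet's inequality in place of the cited Klein bound), not mathematical.
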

\noindent See~\Cref{AppProofLemTermTwo} for the proof of this lemma.

\begin{lemma}[Bound on $\Term_3$]
\label{LemTermThree2}
Conditional on $\|\tarhat - \tarstar \|_2 \geq \radcrit$, we have
\begin{align}
\begin{split}
\Term_3 \leq 2 \|\tarhat - \tarstar \|_2 \Big( 2\radcrit + 8
\sqrt{\frac{2\log(\Logfun(\radcrit)/\pardelta)}{\numobs}} \Big) +
\frac{128\log(\Logfun(\radcrit)/\pardelta)}{\numobs},
\end{split}
\end{align}
with probability at least $1 - \pardelta$.
\end{lemma}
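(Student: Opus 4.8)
The plan is to recognize $\Term_3$ as a \emph{multiplier empirical process} indexed by the shifted class $\{f - \tarstar : f \in \TarClass\}$, with \emph{conditionally mean-zero} multipliers, and then to re-run the argument used to control $\Term_2$ in Lemma~\ref{LemTermTwo}. First I would use $\tfrac{2\unumobs}{\numobs}\cdot\tfrac{1}{\unumobs} = \tfrac{2}{\numobs}$ to write
\begin{align*}
\Term_3 \; = \; \frac{2}{\numobs}\sum_{i\in\UnlabelSet}\big(\tarhat(\Covariate_i) - \tarstar(\Covariate_i)\big)\,\xi_i, \qquad \xi_i \defn \auxhat(\Covariate_i,\Surrogate_i) - \ftil(\Covariate_i) + \tarstar(\Covariate_i) - \auxstar(\Covariate_i,\Surrogate_i).
\end{align*}
The key structural point is that, conditionally on the labeled set $\LabelSet$ and the unlabeled covariates $\{\Covariate_j\}_{j\in\UnlabelSet}$, the multipliers $\{\xi_i\}_{i\in\UnlabelSet}$ are independent with $\Exs[\xi_i\mid\LabelSet,\Covariate_i] = 0$. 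Indeed $\auxhat$ is $\LabelSet$-measurable and $\ftil(\covariate) = \Exs[\auxhat(\Covariate,\Surrogate)\mid\Covariate=\covariate]$, so the $\auxhat-\ftil$ part is centered; and for the square loss the tower property gives $\tarstar = \fstar = \Exs[\auxstar(\Covariate,\Surrogate)\mid\Covariate=\cdot]$, so the $\tarstar-\auxstar$ part is centered as well. It is essential here that the $\Surrogate$-smoothed function $\ftil$ appears rather than $\auxhat$ itself — otherwise the multiplier would carry a nonvanishing conditional mean and the whole argument would collapse. The boundedness assumptions (under which we may take $\|\auxhat\|_\infty\le 1$, together with $\|\auxstar\|_\infty\le 1$ and $\|\tarstar\|_\infty\le 1$) give the deterministic bound $|\xi_i|\le 4$ and hence $\Exs[\xi_i^2\mid\Covariate_i]\le 4$.

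With these two facts, $\Term_3$ has exactly the structure of the multiplier term $\Term_2$: after padding $\xi_i\defn 0$ for $i\in\LabelSet$, it equals $\tfrac{2}{\numobs}\sum_{i=1}^\numobs(\tarhat-\tarstar)(\Covariate_i)\xi_i$, a multiplier process over all $\numobs$ samples with independent, conditionally mean-zero, uniformly bounded multipliers, so the proof of Lemma~\ref{LemTermTwo} applies verbatim with the noise bound $\sigbound$ there replaced by the absolute constant $4$ (one may instead split $\xi_i = (\auxhat-\ftil)(\Covariate_i,\Surrogate_i) - (\auxstar-\tarstar)(\Covariate_i,\Surrogate_i)$ and apply the bound to each centered piece, each bounded by $2$, using $\unumobs\le\numobs$, to sharpen the constants). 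Concretely, for a fixed radius $t$ I would bound $\sup_{f\in\TarClass,\,\|f-\tarstar\|_2\le t}|\tfrac{1}{\numobs}\sum_i(f-\tarstar)(\Covariate_i)\xi_i|$ by symmetrization, then the Ledoux--Talagrand contraction inequality (the $\xi_i$ act as bounded multipliers) to pass to the localized Rademacher complexity $\radcomp(t)$ from~\eqref{EqnDefnLocalRad}, and then Talagrand's concentration inequality to add the usual sub-Gaussian term of order $t\sqrt{\log(1/\pardelta)/\numobs}$ and sub-exponential term of order $\log(1/\pardelta)/\numobs$.

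Finally I would invoke the critical-radius fixed point: since $\radcomp(t)/t$ is nonincreasing in $t$ for the star-shaped shifted class, $t\ge\radcrit$ implies $\radcomp(t)\le t\radcrit/16$, so the expected localized supremum at scale $t$ is at most a constant times $t\radcrit$. A dyadic peeling over the radii $t\in\{\radcrit,2\radcrit,4\radcrit,\dots\}$, together with a union bound over the resulting $\Logfun(\radcrit)$ shells (this is the source of the $\Logfun(\radcrit)$ appearing inside the logarithm), upgrades the fixed-$t$ estimate to one valid simultaneously for all $f\in\TarClass$ with $\|f-\tarstar\|_2\ge\radcrit$; instantiating it at the random radius $\|\tarhat-\tarstar\|_2$ on the event $\|\tarhat-\tarstar\|_2\ge\radcrit$ yields the stated bound. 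The main obstacle is not any single step — each is standard localized-empirical-process machinery and essentially a copy of the proof of Lemma~\ref{LemTermTwo} — but rather the two bits of care that make it go through: verifying cleanly that $\xi_i$ is conditionally mean-zero and bounded (so that symmetrization and contraction legitimately apply), and tracking the constants through the peeling so that they come out as the stated $2$, $8$, and $128$.
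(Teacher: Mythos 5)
Your proposal is correct and follows essentially the same route as the paper: the crucial step is exactly the one you flag---verifying that the multiplier $\omega_i = \auxhat - \ftil + \tarstar - \auxstar$ is conditionally mean-zero (by conditioning on $\LabelSet$, the definition of $\ftil$, and the tower property for $\tarstar - \auxstar$) and bounded by $4$---after which the symmetrization, Ledoux--Talagrand contraction, concentration, critical-radius monotonicity, and peeling steps are run just as in the proof of~\Cref{LemTermTwo}. The only cosmetic difference is that you pad the multipliers by zero to work over all $\numobs$ samples, whereas the paper keeps the process over $\UnlabelSet$ with its own critical radius $\uradcrit$ and converts to $\radcrit$ via the monotonicity condition $\sqrt{\unumobs}\,\uradcrit \leq \sqrt{\numobs}\,\radcrit$.
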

\noindent See~\Cref{sec:proof-LemTermThree2} for the proof of this
lemma. In particular, we can rewrite this guarantee as, conditional on $\|\tarhat - \tarstar \|_2 \geq \radcrit$, we have 
\begin{align*}
\Term_3 \leq 2 \|\tarhat - \tarstar \|_2 \Big( 2\radcrit + 8
\sqrt{\frac{2\log(\Logfun(\radcrit)/\pardelta)}{\numobs}}  +
\frac{64\log(\Logfun(\radcrit)/\pardelta)}{\numobs\radcrit} \Big)
\end{align*}

\begin{lemma}[Bound on $\Term_4$]
  \label{LemTermThree}
Conditional on $\|\tarhat - \tarstar\|_2 \geq \radcrit$, we have
\begin{align*}
\frac{2\unumobs}{\numobs} \cdot \|\tarhat - \tarstar\|_\unumobs  \leq 2\|\tarhat - \tarstar\|_2 +  4 \radcrit + 8 \sqrt{\frac{2\log(\Logfun(\radcrit)/\pardelta)}{\numobs}} + \frac{128 \log(\Logfun(\radcrit)/\pardelta)}{\numobs \radcrit},
\end{align*}
with probability at least $1 - \pardelta$.
\end{lemma}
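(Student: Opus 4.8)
The plan is to treat this bound as a conversion between the empirical norm over the $\unumobs$ unlabeled points and the population $L^2$ norm, applied to the difference $g \defn \tarhat - \tarstar$. Since $\tarhat \in \TarClass$, the function $g$ lies in the shifted class $\{f - \tarstar : f \in \TarClass\}$ (or, after passing to its star hull, a star-shaped class with the same critical radius), over which all the empirical-process estimates will be taken. The starting point is the elementary identity $\|g\|_\unumobs^2 - \|g\|_2^2 = (\PempUnlab - \PP)(g^2)$, which yields $\|g\|_\unumobs \le \|g\|_2 + \frac{[(\PempUnlab - \PP)(g^2)]_+}{\|g\|_2}$ whenever $\|g\|_2 > 0$. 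It therefore suffices to bound the localized empirical process $\sup\{(\PempUnlab - \PP)(h^2) : h \in \TarClass - \tarstar,\ \|h\|_2 \le t\}$ at the relevant scales $t$ and then divide by $t$; note that a uniform (rather than fixed-$g$) argument is genuinely needed here, since $\tarhat$ depends on the unlabeled covariates.

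For this I would run the usual localized argument. Symmetrization together with the Ledoux--Talagrand contraction inequality applied to $u \mapsto u^2$ --- Lipschitz with constant $2b$ on $[-b,b]$, with $b \le 2$ bounding every $f - \tarstar$ under the boundedness assumptions of~\Cref{thm:sqloss} --- reduces the expected supremum to a constant multiple of the localized Rademacher complexity of $\TarClass - \tarstar$ evaluated with the $\unumobs$ samples. Using that this complexity, divided by the radius, is non-increasing, combined with the fixed-point relation~\eqref{EqnDefnRadcrit} at the $\unumobs$-sample scale, bounds it by a multiple of $t\,\uradcrit$ for $t \ge \uradcrit$ (and by a multiple of $\uradcrit^2$ for $t$ in the remaining range $[\radcrit,\uradcrit]$, since $\radcrit \le \uradcrit$). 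A Talagrand/Bousquet concentration inequality for the supremum of this bounded empirical process (here $|h^2| \le b^2$ and $\var(h^2) \le b^2\|h\|_2^2$) then produces fluctuation terms of the form $\sqrt{\log(1/\pardelta)/\unumobs}$ and $\log(1/\pardelta)/\unumobs$; a union bound over the $O(\Logfun(\radcrit))$ dyadic scales $t \in \{\radcrit, 2\radcrit, 4\radcrit, \dots, 4B\}$ upgrades $\log(1/\pardelta)$ to $\log(\Logfun(\radcrit)/\pardelta)$. Evaluating at the dyadic band containing $\|g\|_2$ --- which lies in the peeled range precisely because we condition on $\|g\|_2 \ge \radcrit$ --- and dividing by $\|g\|_2$ gives $\|g\|_\unumobs \le \|g\|_2 + c_1\uradcrit + c_2\sqrt{\log(\Logfun(\radcrit)/\pardelta)/\unumobs} + c_3\log(\Logfun(\radcrit)/\pardelta)/(\unumobs\,\radcrit)$, plus an extra $\uradcrit^2/\radcrit$ contribution from the low-radius range.

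The final step is to multiply through by $2\unumobs/\numobs$ and convert the $\unumobs$-sample quantities into the $\numobs$-sample ones appearing in the statement; this is where the prefactor earns its keep. The monotonicity condition~\eqref{EqnRadeMonotone} gives $\sqrt{\unumobs}\,\uradcrit \le \sqrt{\numobs}\,\radcrit$, hence $\tfrac{\unumobs}{\numobs}\uradcrit \le \sqrt{\unumobs/\numobs}\,\radcrit \le \radcrit$ and likewise $\tfrac{\unumobs}{\numobs}\cdot\tfrac{\uradcrit^2}{\radcrit} \le \radcrit$; similarly $\tfrac{\unumobs}{\numobs}\sqrt{\log(\cdot)/\unumobs} = \sqrt{\unumobs\log(\cdot)}/\numobs \le \sqrt{\log(\cdot)/\numobs}$, and the $1/(\unumobs\,\radcrit)$ term converts the same way. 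Folding these in and bounding the numerical constants by the deliberately loose ones in the statement yields the claim. I expect the main obstacle to be the bookkeeping around the localization floor: because $\radcrit \le \uradcrit$, the peeling must start at $\radcrit$ rather than at the natural radius $\uradcrit$, so the range $t \in [\radcrit,\uradcrit]$ has to be handled separately, and one must verify that the monotonicity assumption still controls it --- which is exactly the point the $2\unumobs/\numobs$ normalization is built to absorb. The remaining ingredients (symmetrization, contraction, a Bernstein-type tail bound, the dyadic union bound) are routine and parallel the arguments already used for $\Term_1$ and $\Term_3$.
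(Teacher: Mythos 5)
Your proposal is correct and follows essentially the same route as the paper's proof: bound $\bigl|\|\tarhat-\tarstar\|_\unumobs - \|\tarhat-\tarstar\|_2\bigr|$ by the localized supremum of $(\PempUnlab-\PP)(h^2)$ divided by $\|\tarhat-\tarstar\|_2 \geq \radcrit$, control that supremum via symmetrization, Ledoux--Talagrand contraction, a Bernstein-type concentration bound and peeling, and then convert $\unumobs$-scale quantities to $\numobs$-scale ones via $\sqrt{\unumobs}\,\uradcrit \leq \sqrt{\numobs}\,\radcrit$. Your handling of the range $t \in [\radcrit, \uradcrit]$ matches the paper's device of peeling from $U = \max\{\uradcrit, \|\tarhat-\tarstar\|_2\}$ and absorbing the resulting $\uradcrit^2/\radcrit$ term with the same monotonicity inequality.
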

\noindent See~\Cref{AppProofLemTermThree} for the proof. \\

\medskip

With these auxiliary results in hand, we can now complete the proof of
the main claim. Let us summarize in a compact way our conclusions thus far, making use
of the shorthand
\begin{align*}
\STerm_\numobs \defn 2\max\{\sigbound, 1\} \: \radcrit + 4\max\{ \sigbound, 2 \} \sqrt{\frac{2\log(\Logfun(\radcrit)/\pardelta)}{\numobs}} + 32 \max\{\sigbound, 8\} \: \frac{\log(3\Logfun(\radcrit)/\pardelta)}{\radcrit \numobs}.
\end{align*}
Using Lemmas~\ref{LemTermOne},~\ref{LemTermTwo},~\ref{LemTermThree},
and~\ref{LemTermThree2}, we have
\begin{align*}
\Term_1 & \leq \|\tarhat - \tarstar\|_2 \cdot \STerm_\numobs, \quad
\Term_2 \leq \| \tarhat - \tarstar \|_2 \cdot \STerm_\numobs, \quad
\Term_3 \leq 2 \|\fhat - \fstar \|_2 \cdot \STerm_\numobs \\ &\qquad
\text{and} \qquad \Term_4 \leq 2 \EmpNormUnlab{\ftil - \tarstar}
\Big\{ \|\tarhat - \tarstar \|_2 + \STerm_\numobs \Big\}.
\end{align*}
Therefore we have either $\|\tarhat - \tarstar\|_2 \leq \radcrit$ or
\begin{align*}
\| \tarhat - \tarstar \|_2^2 \leq 2 (2\STerm_\numobs +
\EmpNormUnlab{\ftil - \tarstar} ) \cdot \|\tarhat - \tarstar \|_2 +
2\STerm_\numobs \cdot \EmpNormUnlab{\ftil - \tarstar}
\end{align*}
Thus, conditional on $\|\tarhat - \tarstar\|_2 \geq \radcrit$,
rearranging the square and some basic algebra yields
\begin{align*}
\Big( \|\tarhat - \tarstar\|_2 - \big(2 \STerm_\numobs +
\EmpNormUnlab{\ftil - \tarstar} \big) \Big)^2 &\leq \big(2
\STerm_\numobs + \EmpNormUnlab{\ftil - \tarstar} \big)^2 +
2\STerm_\numobs \cdot \EmpNormUnlab{\ftil - \tarstar} \\ &\leq 2
\big(2 \STerm_\numobs + \EmpNormUnlab{\ftil - \tarstar} \big)^2
\end{align*}
with probability at least $1 - \tfrac{4\pardelta}{3}$. Consequently,
we have
\begin{align*}
\|\tarhat - \tarstar \|_2 \leq (1 + \sqrt{2}) \big(2\STerm_\numobs +
\EmpNormUnlab{\ftil - \tarstar} \big),
\end{align*}
and the claim follows from adding $\radcrit$ to the above expression.


\subsection{Proof of~\Cref{LemDecomposition2}}
\label{sec:proof-LemDecomposition2}

Recall the definition $\Term_1 \defn \big \{ \| \tarhat - \auxstar
\|_2^2 - \totempnorm{\tarhat - \auxstar }^2 \big \} - \big \{ \|
\tarstar - \auxstar \|_2^2 - \totempnorm{ \tarstar - \auxstar}^2 \big
\}$.  Our proof consists of two steps.  First, we show that
\begin{subequations}
\begin{align}
\label{EqnKeyDecomp}  
 \| \tarhat - \tarstar \|_2^2 & = \Term_1 +
 \underbrace{\totempnorm{\tarhat - \auxstar}^2 - \totempnorm{\tarstar
     - \auxstar}^2}_{\revdefn \Term_5}.
\end{align}
Next, we show that
\begin{align}
\label{EqnTermFiveUpper}  
\Term_5 & \leq \Term_2 + \Term_3 + \Term_4,
\end{align}
\end{subequations}
where the terms $\Term_2$, $\Term_3$ and $\Term_4$, were previously defined in
equations~\eqref{EqnDefnTermTwo2},~\eqref{EqnDefnTermThree2}, and~\eqref{EqnDefnTermFour2},
respectively.

\paragraph{Proof of the decomposition~\eqref{EqnKeyDecomp}:}
By definition, we have \mbox{$\auxstar(\covariate, \surrogate) \defn
  \EE[\Response \mid \covariate, \surrogate]$,} and hence \mbox{$\EE
  \big[ \auxstar(\Covariate, \Surrogate) \mid \covariate]
  \stackrel{(i)}{=} \EE[\Response \mid \covariate] \;
  \stackrel{(ii)}{=} \tarstar(\covariate)$.}  where step (i) follows
by iterated expectation; and step (ii) follows from the definition of
$\tarstar$.  As a consequence, we have
\begin{align*}
\inprod{\tarhat - \tarstar}{\tarstar - \auxstar}_2 & \defn \Exs
\Big[(\tarhat(\Covariate) - \tarstar(\Covariate)) \;
  (\tarstar(\Covariate) - \auxstar(\Covariate, \Surrogate) \Big] \;
= \; 0.
\end{align*}
Using this fact, we can compute
\begin{align*}
  \| \tarhat - \auxstar \|_2^2 = \| \tarhat - \tarstar + \tarstar -
  \auxstar \|_2^2 & = \| \tarhat - \tarstar \|_2^2 + 2 \inprod{\tarhat
    - \tarstar}{\tarstar - \auxstar}_2 + \|\tarstar - \auxstar \|_2^2
  \\
& = \| \tarhat - \tarstar \|_2^2 + \| \tarstar - \auxstar \|_2^2.
\end{align*}
Re-arranging this equality and adding/subtracting $\Term_5$ yields the
claim~\eqref{EqnKeyDecomp}.

\bigskip

\paragraph{Proof of the upper bound~\eqref{EqnTermFiveUpper}:}

By definition, the ERM procedure is minimizing the objective function
\mbox{$\tarplain \mapsto \totempnorm{\ResponseTil - \tarplain}^2 =
  \frac{1}{\numobs} \sum_{i=1}^\numobs \big(\ResponseTil_i -
  \tarplain(\Covariate_i) \big)^2$.}  By definition, the function
$\tarhat \in \TarClass$ is the constrained minimizer of this
objective, whereas $\tarstar$ is feasible.  Consequently, we have
$\|\ResponseTil - \tarhat\|_{\numobs}^2 \leq \| \ResponseTil -
\tarstar \|_{\numobs}^2$, whence
\begin{align*}
\| \tarstar - \tarhat + \ResponseTil - \tarstar \|_{\numobs}^2 \leq \|
\ResponseTil - \tarstar \|_{\numobs}^2.
\end{align*}
Expanding the square and re-arranging yields
\begin{subequations}
  \begin{align}
\label{EqnHackOne}    
\| \tarhat - \tarstar \|_{\numobs}^2 & \leq 2 \PempTot \Big\{(\tarhat
- \tarstar)(\ResponseTil - \tarstar) \Big\} = 2 \PempTot \Big\{
(\tarhat - \tarstar)(\auxstar - \tarstar) \Big\} + 2 \PempTot
\Big\{ (\tarhat - \tarstar)(\ResponseTil - \auxstar) \Big\}.
\end{align}
Second, we observe that
\begin{align}
\label{EqnHackTwo}  
\totempnorm{\tarhat - \auxstar}^2 & = \| \tarhat - \tarstar
\|_\numobs^2 + 2 \PP_\numobs \Big\{ (\tarhat - \tarstar)(\tarstar -
\auxstar) \Big\} + \| \tarstar - \auxstar \|_\numobs^2
\end{align}
\end{subequations}
Since $\Term_5 = \totempnorm{\tarhat - \auxstar}^2 -
\totempnorm{\tarstar - \auxstar}^2$, combining
inequality~\eqref{EqnHackOne} with equality~\eqref{EqnHackTwo} yields
\begin{align*}
\totempnorm{\tarhat - \auxstar}^2 - \totempnorm{\tarstar - \auxstar}^2 &\leq 2\PP_\numobs \Big\{ (\fhat - \fstar)(\ResponseTil - \auxstar) \Big\} \\
&= \frac{2\lnumobs}{\numobs} \cdot \PempLab \big\{(\tarhat -
\tarstar)(\Response - \auxstar) \big\} + \frac{2\unumobs}{\numobs} \cdot
\PempUnlab \big\{ (\tarhat - \tarstar)(\auxhat - \auxstar) \big\} \\ &=
\Term_2 + \frac{2\unumobs}{\numobs} \cdot \PempUnlab \big\{ (\tarhat
- \tarstar)(\auxhat - \auxstar) \big\}
\end{align*}
Then some basic algebra yields
\begin{align*}
\frac{2\unumobs}{\numobs} \: \PempUnlab \big\{ (\tarhat - \tarstar)
(\auxhat - \auxstar) \big\} &= \frac{2\unumobs}{\numobs} \: \PempUnlab
\big \{ (\tarhat - \tarstar)(\auxhat - \ftil + \tarstar - \auxstar)
\big\} + \frac{2\unumobs}{\numobs} \: \PempUnlab \big \{ (\tarhat -
\tarstar)(\ftil - \tarstar) \big \} \\ &= \Term_3 +
\frac{2\unumobs}{\numobs} \: \PempUnlab \big \{ (\tarhat -
\tarstar)(\ftil - \tarstar) \big \} \\ &\leq \Term_3 +
\frac{2\unumobs}{\numobs} \: \EmpNormUnlab{\tarhat - \tarstar} \cdot
\EmpNormUnlab{\ftil - \tarstar},
\end{align*}
as desired.


\subsection{Proof of~\Cref{LemTermOne}}
\label{AppProofLemTermOne}

Consider a function $h: \Xspace \times \Wspace \rightarrow \real$, and
associated samples $\{(\Covariate_i, \Surrogate_i)) \}_{i=1}^\numobs$.
Throughout this proof, we make use of the shorthand notation $\PempTot
h^2 \defn \frac{1}{\numobs} \sum_{i=1}^\numobs h^2(\Covariate_i,
\Surrogate_i) \equiv \totempnorm{h}^2$ and $\Prob h^2 \defn \Exs
\big[h^2(\Covariate, \Surrogate)\big] \equiv \|h\|_2^2$, along with
analogous notation for functions $f: \Xspace \rightarrow \real$.

Note that $\Term_1$ involves the function $\tarhat$, which is
data-dependent.  Consequently, in order to bound $\Term_1$, we need to
define a suitable empirical process, and bound its supremum.  So as to
do so, we first define the function
\begin{align*}
H_\tarplain(\covariate, \surrogate) \defn \big \{
\tarplain(\covariate) - \auxstar(\covariate, \surrogate) \big \}^2 -
\big \{ \tarstar(\covariate) - \auxstar(\covariate, \surrogate) \big
\}^2 \quad \mbox{for each $\tarplain \in \TarClass$.}
\end{align*}
Moreover, for each radius $\myrad > 0$, we define
\begin{align*}
  \supRVn{\myrad} \defn \sup_{H_f \in \hclass(\myrad)} \Big \{
  \PempTot H_f^2 - \Prob H_f^2 \Big \} \quad \mbox{where
    \mbox{$\hclass(\myrad) \defn \Big\{ H_\tarplain \, \mid \,
      \tarplain \in \TarClass \; \mbox{such that $\|\tarplain -
        \tarstar \|_2 \leq \myrad$.}  \Big \}$.}}
\end{align*}
Notice that $\supRVn{\myrad}$ is the supremum of a zero-mean empirical
process over $\hclass(\myrad)$, measuring the deviations between the
$L^2(\Prob)$-norm and its empirical counterpart $L^2(\PempTot)$.
Moreover, by construction, we have the upper bound
\begin{subequations}
\begin{align}
\label{EqnRandomRad}
\Term_1 & \leq \supRVn{\|\tarhat - \tarstar\|_2}.
\end{align}

With this set-up, the remainder of the proof consists of two steps.
First, we prove that for each fixed radius $\myrad \geq \radcrit$,
\begin{align}
\label{EqnFixedRadBound}
\supRVn{\myrad} \leq \radcrit \: \myrad + 4 
\sqrt{\frac{2\log(1/\pardelta)}{\numobs}} \: \myrad + \frac{64 \log(1/\pardelta)}{\numobs}.
\end{align}
\end{subequations}
This bound holds for a fixed radius $\myrad$, so that that we cannot
directly apply it to the random radius $\|\tarhat - \tarstar\|_2$ in
the inequality~\eqref{EqnRandomRad}.  However, we can
apply~\Cref{lems:peeling}, a general result on ``peeling'' proved
in~\Cref{AppLemPeeling}.  Using 
\begin{align*}
Q(\myrad, s) \defn \radcrit \myrad + 4 \sqrt{\frac{2s}{\numobs}} \: \myrad + \frac{64\log(1/\pardelta)}{\numobs}, \qquad \text{and} \qquad U = \|\tarhat - \tarstar \|_2, 
\end{align*}
we conclude that if
bound~\eqref{EqnFixedRadBound} holds, then have
\begin{align*}
\supRVn{\|\tarhat - \tarstar \|_2} & \leq 2 \radcrit \|\tarhat
- \tarstar \|_2 + 8 
\sqrt{\frac{2\log(\Logfun(\radcrit)/\pardelta)}{\numobs}} \cdot \|
\tarhat - \tarstar \|_2 + \frac{128
  \log(\Logfun(\radcrit)/\pardelta)}{\numobs}
\end{align*}
with probability at least $1 - \pardelta$. Combined with our original
bound~\eqref{EqnRandomRad}, this completes the proof of the lemma.\\


\noindent It remains to prove our outstanding claim.

\paragraph{Proof of the bound~\eqref{EqnFixedRadBound}:}
Fix some $\myrad \geq \radcrit$, and adopt the shorthand $\hclass
\equiv \hclass(\myrad)$.  Recall that $\VarFun{\hclass} \defn \sup_{h
  \in \hclass} \Var(h)$.  Applying~\Cref{lems:emp-conc} with $\tau =
1$ to $\hclass$ yields
\begin{align*}
\supRVn{\myrad} \leq 2 \, \EE[\supRVn{\myrad}] +
\sqrt{\VarFun{\hclass}} \sqrt{\frac{2\log(1/\pardelta)}{\numobs}} +
\frac{64\log(1/\pardelta)}{\numobs}
\end{align*}
with probability at least $1 - \pardelta$. In order to complete the
proof, it suffices to show that
\begin{align}
\label{eqn:emp-calcs}
\EE[\supRVn{\myrad}] \stackrel{(a)}{\leq}  \: \myrad \:
\radcrit, \qquad \text{and} \qquad \VarFun{\hclass}
\stackrel{(b)}{\leq} 16  \: \myrad^2,
\end{align}
valid for any $\myrad \geq \radcrit$. \\

\noindent \emph{Variance bound:} We first prove the variance upper
bound~\eqref{eqn:emp-calcs}(b).  We have
\begin{align*}
\VarFun{\hclass(\myrad)} = \sup_{\CompFun \in \hclass(\myrad)}
\VarFun{h} & \leq \sup_{\|\tarplain - \tarstar \|_2 \leq \myrad} \PP
\Big( (\tarplain - \auxstar)^2 -(\tarstar - \auxstar)^2 \Big)^2 \\
& = \sup_{\|\tarplain - \tarstar \|_2 \leq r} \PP \Big[ (\tarplain -
  \tarstar)^2 (\tarplain + \tarstar - 2 \auxstar)^2 \Big] \\
& \leq 16  \sup_{\|\tarplain - \tarstar \|_2 \leq \myrad}
\PP(\tarplain - \tarstar)^2 = 16 \myrad^2,
\end{align*}
as claimed. \\

\noindent \emph{Mean bound:} Next we prove the upper
bound~\eqref{eqn:emp-calcs}(a) on the mean.  We have
\begin{align*}
\EE[\supRVn{\myrad}] & = \EE\Big[ \sup_{\|\tarplain - \tarstar \|_2
    \leq \myrad} \Big| \|\tarplain - \auxstar \|_{\numobs}^2 - \|
  \tarstar - \auxstar \|_{\numobs}^2 - \big(\| \tarplain - \auxstar
  \|_2^2 - \| \tarstar - \auxstar \|_2^2 \big) \Big| \Big] \\
& \stackrel{(i)}{\leq} 2 \EE\Big[ \sup_{\|\tarplain - \tarstar \|_2
    \leq \myrad} \Big| \frac{1}{\numobs} \sum_{i=1}^\numobs \rad_i
  \Big\{ (\tarplain(\Covariate_i) - \auxstar(\Covariate_i,
  \Surrogate_i))^2 - (\tarstar(\Covariate_i) - \auxstar(\Covariate_i,
  \Surrogate_i))^2 \Big\} \Big| \Big] \\
& \stackrel{(ii)}{\leq} 16 \cdot \EE \Big[ \sup_{\|\tarplain
    - \tarstar \|_2 \leq \myrad} \Big| \frac{1}{\numobs}
  \sum_{i=1}^\numobs \rad_i \big(\tarplain(\Covariate_i) -
  \tarstar(\Covariate_i) \big) \Big| \Big] \\
& = 16 \cdot \radcomp(\myrad; \TarClassStar)
\end{align*}
Here step (i) follows by a standard symmetrization argument, and step
(ii) uses the Ledoux-Talagrand contraction inequality and the fact
that
\begin{align*}
\Big| (\tarplain(\covariate) - \auxstar(\covariate, \surrogate))^2 -
(\tarplain'(\covariate) - \auxstar(\covariate, \surrogate))^2 \Big|
\leq 4 | \tarplain(\covariate) - \tarplain'(\covariate) |.
\end{align*}

To complete the proof, it suffices to show that $\radcomp(\myrad;
\TarClassStar) \leq \frac{\myrad \radcrit}{16}$ for $\myrad \geq
\radcrit$.  It is known that the function $\myrad \mapsto
\frac{\radcomp(\myrad; \TarClassStar)}{\myrad}$ is non-increasing
(e.g.,see Lemma 13.6 in the book~\cite{dinosaur2019}).
Consequently, for $\myrad \geq \radcrit$, we have
$\frac{\radcomp(\myrad; \TarClassStar)}{\myrad} \leq
\frac{\radcomp(\radcrit; \TarClassStar)}{\radcrit} \leq
\frac{\radcrit}{16}$, and re-arranging establishes the required bound.


\subsection{Proof of~\Cref{LemTermTwo}}
\label{AppProofLemTermTwo}

Introduce the shorthand $\noise \defn \Response - \auxstar(\Covariate,
\Surrogate)$, and define the supremum
\begin{align*}
\NewEmp_\lnumobs(\myrad) \defn \sup \limits_{\|\tarplain - \tarstar
  \|_2 \leq \myrad} \big| \frac{1}{\lnumobs} \sum_{i=1}^\lnumobs
\noise_i \big(\tarplain(\Covariate_i) - \tarstar(\Covariate_i) \big)
\big|.
\end{align*}
By construction, we have $\PempLab \Big\{ (\tarhat -
\tarstar)(\response - \auxstar) \Big\} \leq
\NewEmp_\lnumobs(\|\tarhat - \tarstar\|_2)$.  As in our previous proof
(see~\Cref{AppProofLemTermOne}), we proceed via two steps.  First, we
establish that for each fixed $\myrad$,
\begin{align}
\label{EqnTermTwoInter}
\NewEmp_\lnumobs(\myrad) \leq \frac{\sigbound \lradcrit \myrad}{2} +
\sigbound \sqrt{\frac{2\log(1/\pardelta)}{\lnumobs}} t + \frac{8
  \sigbound \log(1/\pardelta)}{\lnumobs}.
\end{align}
By applying~\Cref{AppLemPeeling} with $Q(\myrad,s) = \myrad \cdot
\big(\tfrac{\sigbound \lradcrit}{2} + \sigbound
\sqrt{\tfrac{2s}{\lnumobs}} \big) + \frac{8\sigbound^2s}{\lnumobs}$ and $U = \max\{\lradcrit, \|\tarhat - \tarstar\|_2\}$, we
obtain
\begin{align*}
\NewEmp_\lnumobs(\|\tarhat - \tarstar\|_2) \leq \NewEmp_\lnumobs(U) \leq \sigbound \lradcrit \cdot
U + 2\sigbound
\sqrt{\frac{2\log(\Logfun(\lradcrit)/\pardelta)}{\lnumobs}} \cdot U + \frac{16\sigbound
  \log(\Logfun(\lradcrit)/\pardelta)}{\lnumobs}.
\end{align*}
Therefore we have, using the facts that $\sqrt{\lnumobs} \lradcrit
\leq \sqrt{\numobs} \radcrit$ and $\Logfun$ is a decreasing function,
we have
\begin{align*}
\Term_2 &\leq \max\{\radcrit, \|\tarhat - \tarstar\|_2\} \cdot \Big
(2 \sigbound \radcrit + 4\sigbound \sqrt{\frac{2
    \log(\Logfun(\radcrit)/\pardelta)}{\numobs}} \Big) + \frac{32
  \sigbound \log(\Logfun(\radcrit)/\pardelta)}{\numobs},
\end{align*}
as desired.


\paragraph{Proof of the bound~\eqref{EqnTermTwoInter}:}
Fix $t \geq \lradcrit$ and define $\hclass \defn \{ \noise(\tarplain -
\tarstar) : \tarplain \in \TarClass, \, \| \tarplain - \tarstar \|_2
\leq t\}$.  Applying~\Cref{lems:emp-conc} yields
\begin{align*}
\NewEmp_\lnumobs(\myrad) \leq 2 \EE\big[ \NewEmp_\lnumobs(\myrad)
  \big] + \sqrt{\sigma^2(\hclass)}
\sqrt{\frac{2\log(1/\pardelta)}{\lnumobs}} + \frac{8\sigbound^2
  \log(1/\pardelta)}{\lnumobs}
\end{align*}
with probability at least $1 - \pardelta$. As before, it suffices to
show that $\EE\big[ \NewEmp_\lnumobs(\myrad)\big] \leq \frac{\sigbound
  \myrad \lradcrit}{4}$ and $\sigma^2(\hclass) \leq \sigbound^2
\myrad^2$ for $\myrad \geq \lradcrit$.

\noindent \emph{Variance bound:} We have
\begin{align*}
\VarFun{\hclass} = \sup_{\|\tarplain - \tarstar \|_2 \leq \myrad}
\Var(\noise(\tarplain - \tarstar)) = \sup_{\| \tarplain - \tarstar
  \|_2 \leq \myrad} \EE[\noise^2 \big(\tarplain(\covariate) -
  \tarstar(\covariate)\big)^2] \leq \sigbound^2 \myrad^2.
\end{align*}

\noindent \emph{Mean bound:} We now bound the mean.  Letting
$\{\rade_i\}_{i=1}^\lnumobs$ be an i.i.d. sequence Rademacher random
variables, a symmetrization argument yields
\begin{align*}
\EE \Big[ \sup_{\|\tarplain - \tarstar\|_2 \leq \myrad} \big|
  \frac{1}{\lnumobs} \sum_{i=1}^\lnumobs \noise_i \big(
  \tarplain(\Covariate_i) - \tarstar(\Covariate_i) \big) \big| \Big] &
\leq 2 \; \EE \, \Big[ \sup_{\|\tarplain - \tarstar\|_2 \leq \myrad}
  \big| \frac{1}{\lnumobs} \sum_{i=1}^\lnumobs \rade_i \noise_i
  \big(\tarplain(\Covariate_i) - \tarstar(\Covariate_i) \big) \big|
  \Big].
\end{align*}
Then by conditioning on $(\noise_i,\Covariate_i)_{i=1}^\lnumobs$ and
applying the Ledoux-Talagrand contraction inequality to $\phi_i(t) =
\frac{\noise_i t}{\sigbound}$, we have
\begin{align*}
\EE \Big[ \sup_{\|\tarplain - \tarstar\|_2 \leq \myrad} \Big|
  \frac{1}{\lnumobs} \sum_{i=1}^\lnumobs \rade_i \noise_i\big(
  \tarplain(\Covariate_i) - \tarstar(\Covariate_i) \big) \Big| \Big]
\leq 2 \sigbound \cdot \mathcal{R}_\lnumobs(\myrad; \TarClassStar).
\end{align*}
If $\myrad \geq \lradcrit$, we have $\mathcal{R}_\lnumobs(\myrad
;\TarClassStar) \leq \frac{\myrad \lradcrit}{16}$, and putting
together the pieces establishes the claim.


\subsection{Proof of~\Cref{LemTermThree2}}
\label{sec:proof-LemTermThree2}

This proof is mostly the same as the proof of~\Cref{LemTermTwo}; the
main distinction is that we need to take extra care in verifying that
the defined empirical process is zero-mean.  Introducing the shorthand
$\omega_i \defn \auxhat(\Covariate_i, \Surrogate_i) -
\ftil(\Covariate_i) + \tarstar(\Covariate_i) - \auxstar(\Covariate_i,
\Surrogate_i)$, define the supremum
\begin{align*}
\NewEmp_\unumobs(t) \defn \sup_{\|\tarplain - \tarstar\|_2 \leq t}
\Big| \frac{1}{\unumobs} \sum_{i=1}^\unumobs \omega_i \big(
\tarplain(\Covariate_i) - \tarstar(\Covariate_i) \big) \Big|.
\end{align*}
By construction, we have $\Term_3 \leq \frac{2\unumobs}{\numobs}
\NewEmp_\unumobs(\|\tarhat - \tarstar \|_2)$. Like previously, we
begin by establishing that for each fixed $t$,
\begin{align}
\label{EqnTermThree2Inter}
\NewEmp_\unumobs(\myrad) \leq  t \uradcrit + 4 
\sqrt{\frac{2\log(1/\pardelta)}{\unumobs}} t + \frac{32 ^2
  \log(1/\pardelta)}{\unumobs}.
\end{align}
Therefore, using $Q(t, s) = t \cdot(  \uradcrit + 4
\sqrt{\tfrac{2s}{\unumobs}}) + \tfrac{32 s}{\unumobs}$ and $U = \max\{\|\tarhat - \tarstar\|_2, \uradcrit\}$
in~\Cref{lems:peeling}, we have 
\begin{align*}
\NewEmp_\unumobs(\|\tarhat - \tarstar\|_2) \leq 2 \max\{\|\tarhat - \tarstar\|_2, \uradcrit\} \Big( \uradcrit + 4 
\sqrt{\frac{2\log(\Logfun(\uradcrit)/\pardelta)}{\unumobs}} \Big) +
\frac{64 \log(\Logfun(\uradcrit)/\pardelta)}{\unumobs},
\end{align*}
with probability at least $1 - \pardelta$.  Using calculations similar
to our earlier ones along with the fact that $\sqrt{\unumobs}
\uradcrit \leq \sqrt{\numobs} \radcrit$, we find that
\begin{align*}
\frac{2 \unumobs}{\numobs} \cdot \NewEmp_\unumobs(\|\tarhat -
\tarstar\|_2) \leq 4 \|\tarhat - \tarstar \|_2 \Big( \radcrit + 4
\sqrt{\frac{2 \log(\Logfun(\radcrit)/\pardelta)}{\numobs}} \Big) +
\frac{128 \log(\Logfun(\radcrit)/\pardelta)}{\numobs},
\end{align*}
as desired.

\paragraph{Proof of~\Cref{EqnTermThree2Inter}:}

For a fixed $t \geq \uradcrit$, we define
\begin{align*}
\hclass \defn \{ \omega(\tarplain - \tarstar) : \tarplain \in
\TarClass, \, \|\tarplain - \tarstar\|_2 \leq t \}.
\end{align*}
Our approach is to apply~\Cref{lems:emp-conc} so as to control
$\NewEmp_\unumobs(t)$. We begin by verifying that
$\NewEmp_\unumobs(t)$ is the supremum of zero-mean stochastic
process. Since $\LabelSet$ and $\UnlabelSet$ are independent, we can
condition on $\LabelSet$ and accordingly treat $\auxhat$ as a fixed
quantity. Thus we have
\begin{align*}
\EE\big[\omega \big( \tarplain(\Covariate) - \tarstar(\Covariate))
  \big)\big] &= \EE \Big[ \Big( \auxhat(\Covariate, \Surrogate) -
  \ftil(\Covariate) \Big) \big( \tarplain(\Covariate) -
  \tarstar(\Covariate)) \Big] \\ & \qquad \qquad + \EE\Big[ \Big(
  \tarstar(\Covariate) - \auxstar(\Covariate, \Surrogate) \Big)
  \big(\tarplain(\Covariate) - \tarstar(\Covariate)) \Big].
\end{align*}
Observing that \mbox{$\EE[ \auxhat(\Covariate, \Surrogate) -
    \ftil(\Covariate) \mid \Covariate ] = \ftil(\Covariate) -
  \ftil(\Covariate) = 0$} and
\begin{align*}
  \EE[ \tarstar(\Covariate) - \auxstar(\Covariate, \Surrogate) \mid
    \Covariate] = \tarstar(\Covariate) - \tarstar(\Covariate) = 0,
\end{align*}
we complete the proof by applying the law of total expectation.

Applying~\Cref{lems:emp-conc} yields
\begin{align*}
\NewEmp_\unumobs(t) \leq 2 \EE[\NewEmp_\unumobs(t)] +
\sqrt{\sigma^2(\hclass)} \sqrt{\frac{2\log(1/\pardelta)}{\unumobs}} +
\frac{32 \log(1/\pardelta)}{\unumobs}.
\end{align*}
As before, we proceed to control the mean and variance terms. We have
\begin{align*}
\EE[\NewEmp_\unumobs(t)] &= \EE\Big[ \sup_{\| \tarplain - \tarstar
    \|_2 \leq t} \Big| \frac{1}{\unumobs} \sum_{i=\lnumobs+1}^\numobs
  \omega_i \big( \tarplain(\Covariate_i) - \tarstar(\Covariate_i)
  \big) \Big| \Big] \\
& \stackrel{(i)}{\leq} 2\, \EE\Big[ \sup_{\| \tarplain - \tarstar
    \|_2 \leq t} \Big| \frac{1}{\unumobs} \sum_{i=\lnumobs+1}^\numobs
  \omega_i \rad_i \big( \tarplain(\Covariate_i) -
  \tarstar(\Covariate_i) \big) \Big| \Big]
\\
& \stackrel{(ii)}{\leq} 8 \, \EE\Big[ \sup_{\| \tarplain -
    \tarstar \|_2 \leq t} \Big| \frac{1}{\unumobs}
  \sum_{i=\lnumobs+1}^\numobs \rad_i \big( \tarplain(\Covariate_i) -
  \tarstar(\Covariate_i) \big) \Big| \Big] = 8 \cdot
\mathcal{R}_\unumobs(t; \TarClassStar).
\end{align*}
Step (i) follows from a standard symmetrization argument, and step
(ii) follows from conditioning on $(\omega_i, \Covariate_i,
\Surrogate_i)$ and applying the Ledoux-Talagrand contraction
inequality with the functions $\phi_i(t) = \frac{\omega_i
  t}{4}$. By the definition of the critical radius $\uradcrit$,
we have the bound \mbox{$8  \mathcal{R}_\unumobs(\uradcrit;
  \TarClassStar) \leq \frac{ t \uradcrit}{2}$.}

It remains to control the variance.  We have
\begin{align*}
\sigma^2(\hclass) &= \sup_{\| \tarplain - \tarstar \|_2 \leq t}
\text{Var}(\omega (\tarplain - \tarstar)) \leq \sup_{\|\tarplain -
  \tarstar \|_2 \leq t} \EE\Big[ \omega^2 \big( \tarplain(\Covariate)
  - \tarstar(\Covariate))^2 \Big] \leq 16 t^2,
\end{align*}
as desired.

\subsection{Proof of~\Cref{LemTermThree}}
\label{AppProofLemTermThree}

\newcommand{\supRVm}[1]{Z_\unumobs(#1)}

The proof follows the same steps as the previous section. Define the
random variable
\begin{align*}
\supRVm{\myrad} \defn \sup_{\|\tarplain - \tarstar \|_2 \leq \myrad}
\Big| \|\tarplain - \tarstar \|_{\unumobs}^2 - \|\tarplain - \tarstar
\|_2^2 \Big|.
\end{align*}
For each $\tarplain \in \TarClass$, define the function
$\CompFun_\tarplain(\covariate) \defn \big(\tarplain(\covariate)-
\tarstar(\covariate)\big)^2 - \| \tarplain - \tarstar\|_2^2$, along
with the associated function class $\hclass(\myrad) \defn \big \{
\CompFun_\tarplain \, \mid \, \|\tarplain - \tarstar \|_2 \leq \myrad
\big\} $.  As before, we adopt the shorthand $\hclass$ when $\myrad$
is clear from context.

Applying~\Cref{lems:emp-conc} to $\hclass$ yields
\begin{align*}
\supRVm{\myrad} \leq 2 \EE[\supRVm{\myrad}] + \sqrt{\VarFun{\hclass}}
\sqrt{\frac{2\log(1/\pardelta)}{\unumobs}} + \frac{32
  \log(1/\pardelta)}{\unumobs} \qquad \mbox{with probability at least
  $1 - \delta$.}
\end{align*}
We claim that $\EE[\supRVm{\myrad}] \leq \frac{\myrad \:
  \uradcrit}{2}$ and $\VarFun{\hclass} \leq 4 \: \myrad^2$.  We return
to prove these claims momentarily.

Thus, using $Q(\myrad, s) = \myrad \cdot ( \radcrit + 2 \sqrt{\frac{2
    s}{\unumobs}}) + \frac{32 s}{\unumobs}$ and $U = \max\{\uradcrit,
\|\tarhat - \tarstar\|_2\}$ in~\Cref{lems:peeling}, we find that
\begin{align*}
\supRVm{\|\tarhat - \tarstar \|_2} \leq 2 \max\{\uradcrit, \|\tarhat -
\tarstar \|_2 \} \Big( \uradcrit + 2 \sqrt{\frac{2
    \log(\Logfun(\uradcrit)/\pardelta)}{\unumobs}} \Big) + \frac{64
  \log(\Logfun(\uradcrit)/\pardelta)}{\unumobs}.
\end{align*}
with probability at least $1 - \pardelta$. By construction, we have
\begin{align*}
  \Big| \|\tarhat - \tarstar\|_{\unumobs}^2 - \|\tarhat -
\tarstar\|_2^2 \Big| \leq \supRVm{\|\tarhat -
  \tarstar\|_2}.
\end{align*}
Furthermore, we have
\begin{align*}
\Big| \|\tarhat - \tarstar \|_2 - \| \tarhat - \tarstar \|_{\unumobs}
\Big| = \tfrac{\big| \|\tarhat - \tarstar\|_{\unumobs}^2 - \|\tarhat -
  \tarstar\|_2^2 \big|}{\|\tarhat - \tarstar \|_{\unumobs} + \|\tarhat
  - \tarstar\|_2} \leq \tfrac{\big| \|\tarhat -
  \tarstar\|_{\unumobs}^2 - \|\tarhat - \tarstar\|_2^2 \big|}{
  \|\tarhat - \tarstar\|_2}.
\end{align*}
Using the fact that $\|\tarhat - \tarstar\|_2 \geq \radcrit$, we find
that
\begin{align*}
\Big| \|\tarhat - \tarstar \|_2 - \| \tarhat - \tarstar \|_{\unumobs}
\Big| \leq 2 \max\Big\{\frac{\uradcrit}{\radcrit}, 1 \Big\} \Big(
\uradcrit + 2 \sqrt{\frac{2
    \log(\Logfun(\uradcrit)/\pardelta)}{\unumobs}} \Big) + \frac{64
  \log(\Logfun(\uradcrit)/\pardelta)}{\unumobs \radcrit}.
\end{align*}
Thus, we have
\begin{align*}
\frac{2\unumobs}{\numobs} \cdot \|\tarhat - \tarstar\|_\unumobs \leq
2\|\tarhat - \tarstar\|_2 + 4 \radcrit + 8
\sqrt{\frac{2\log(\Logfun(\radcrit)/\pardelta)}{\numobs}} + \frac{128
  \log(\Logfun(\radcrit)/\pardelta)}{\numobs \radcrit},
\end{align*}
where we have used the fact that $\sqrt{\unumobs} \uradcrit \leq
\sqrt{\numobs} \radcrit$.

\smallskip

\noindent It remains to prove the variance and mean bounds claimed
previously.

\noindent \emph{Variance bound:} We have
\begin{align*}
\VarFun{\hclass} = \sup_{h \in \hclass} \VarFun{h} \leq
\sup_{\|\tarplain - \tarstar \|_2 \leq \myrad} \PP( \tarplain -
\tarstar)^4 \leq 4\sigbound^2 \sup_{\|\tarplain- \tarstar \|_2 \leq
  \myrad} \PP(\tarplain - \tarstar)^2 = 4 \myrad^2,
\end{align*}
where the second inequality follows from the fact that $\|\tarplain -
\tarstar \|_\infty \leq 2$. \\

\noindent \emph{Mean bound:} We have
\begin{align*}
\EE[\supRVm{\myrad}] &= \EE\Big[ \sup_{\|\tarplain - \tarstar \|_2
    \leq \myrad} \Big| \|\tarplain - \tarstar \|_{\unumobs}^2 - \|
  \tarplain - \tarstar \|_2^2 \Big|\Big] \\
& \leq 2\, \EE\Big[ \sup_{\|\tarplain - \tarstar \|_2 \leq \myrad}
  \Big| \frac{1}{\unumobs} \sum_{i=1}^\unumobs\rad_i \Big( \tarplain(\Covariate_i) -
  \tarstar(\Covariate_i)\Big)^2 \Big| \Big] \\
& \leq 8  \, \EE \Big[ \sup_{\|\tarplain - \tarstar \|_2 \leq
    \myrad} \Big| \frac{1}{\unumobs} \sum_{i=1}^\unumobs \rad_i \Big( \tarplain(\Covariate_i) -
  \tarstar(\Covariate_i)\Big) \Big| \Big] \\
& = 8 \, \mathcal{R}_\unumobs(\myrad; \TarClassStar) \leq \frac{ \:
  \myrad \: \uradcrit}{2}.
\end{align*}
using the definition of the critical radius in the last step.



\section{Proof of~\Cref{thm:main}(a)}
\label{sec:proof-main}

In this appendix, we prove our guarantee for general loss functions,
as stated in part (a) of~~\Cref{thm:main}.  (See~\Cref{AppGLM} for the
proof of part (b).)

\subsection{Main argument}
For convenience, we introduce the shorthand $\LossDiff_g \defn
\Loss_{\tarhat, g} - \Loss_{\tarstar, g}$.  By the
$\scparam$-convexity property of $\Loss$, we have $\frac{\scparam}{2}
\|\tarhat - \tarstar \|_2^2 \leq \PP \LossDiff_{\auxstar}$. Thus, we
can write
\begin{align*}
  \frac{\scparam}{2} \|\tarhat - \tarstar\|_2^2 & \leq (\PP -
  \PP_\numobs)(\LossDiff_{\auxstar}) +
  \PP_\numobs(\LossDiff_{\auxstar}) \leq \underbrace{(\PP -
    \PP_\numobs)(\LossDiff_{\auxstar})}_{\revdefn \Term_1} +
  \underbrace{\PP_\numobs \big(\LossDiff_{\auxstar} -
    \LossDiff_{\auxhat} \big)}_{\revdefn \Term_2}
\end{align*}
where the last step uses the fact that
$\PP_\numobs(\LossDiff_{\auxhat}) \leq 0$ by the definition of
$\tarhat$ and the fact that $\ResponseTil_i = \auxhat(\Covariate_i)$
for both $\LabelSet$ and $\UnlabelSet$. \\

\noindent Next we introduce an auxiliary result that bounds $\Term_1$:
\begin{lemma}
\label{lems:unif-cont-2}
Conditional on $\|\tarhat - \tarstar \|_2 \geq \radcrit$, we have
\begin{align*}
\Term_1 \leq L\radcrit \| \tarhat - \tarstar \|_2 + L \|\tarhat -
\tarstar\|_2 \sqrt{\frac{8\log(\Logfun(\radcrit)/\pardelta)}{\numobs}}
+ \frac{32L\sigbound \log(\Logfun(\radcrit)/\pardelta)}{\numobs}
\end{align*}
with probability at least $1 - \pardelta$.
\end{lemma}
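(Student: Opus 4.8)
The plan is to follow the same three-step template used to control the term $\Term_1$ in the square-loss analysis (cf.\ the proof of~\Cref{LemTermOne}): first reduce $\Term_1$ to the supremum of a \emph{centered} empirical process over a localized ball around $\tarstar$; then bound that supremum for a fixed radius using the concentration inequality~\Cref{lems:emp-conc}; and finally convert the fixed-radius bound into one at the data-dependent radius $\|\tarhat-\tarstar\|_2$ via the peeling lemma~\Cref{lems:peeling}.

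Concretely, for each $t>0$ I would define
\[
Z_\numobs(t) \;\defn\; \sup_{\substack{\tarplain\in\TarClass\\ \|\tarplain-\tarstar\|_2\le t}} \Big| (\PP-\PP_\numobs)\big(\Loss_{\tarplain,\auxstar}-\Loss_{\tarstar,\auxstar}\big)\Big|,
\]
so that $\Term_1\le Z_\numobs(\|\tarhat-\tarstar\|_2)$ by construction. Note that, in contrast with~\Cref{sec:proof-LemTermThree2}, there is nothing to check regarding centering here: the operator $\PP-\PP_\numobs$ makes each summand mean-zero by definition. The relevant function class is $\hclass(t)\defn\{(x,w)\mapsto \Loss(\tarplain(x),\auxstar(x,w))-\Loss(\tarstar(x),\auxstar(x,w)) : \tarplain\in\TarClass,\ \|\tarplain-\tarstar\|_2\le t\}$. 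Using the $\Lip$-Lipschitz property~\eqref{eqn:mreg-lip} in the first argument together with the uniform bounds $\|\tarplain\|_\infty\le 1$ and $\|\tarstar\|_\infty\le 1$, every member of $\hclass(t)$ has sup-norm at most $2\Lip$ and variance at most $\Lip^2 t^2$.

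The heart of the argument is the fixed-radius bound. For $t\ge\radcrit$, I would apply~\Cref{lems:emp-conc} to $\hclass(t)$ to obtain, with probability at least $1-\pardelta$,
\[
Z_\numobs(t)\;\le\; 2\,\EE[Z_\numobs(t)] \;+\; \Lip t\sqrt{\tfrac{2\log(1/\pardelta)}{\numobs}} \;+\; \tfrac{c\,\Lip\sigbound\log(1/\pardelta)}{\numobs},
\]
where the last two terms use the variance and sup-norm bounds above. To control $\EE[Z_\numobs(t)]$ I would first symmetrize, then apply the Ledoux--Talagrand contraction inequality to the $\Lip$-Lipschitz maps $s\mapsto \Loss(s,\auxstar(x,w))$ (with the base point at $\tarstar$, so that the composed functions vanish there), which gives $\EE[Z_\numobs(t)]\le 4\Lip\,\radcomp(t;\TarClassStar)$. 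Since $t\mapsto\radcomp(t;\TarClassStar)/t$ is non-increasing, for $t\ge\radcrit$ the fixed-point definition~\eqref{EqnDefnRadcrit} yields $\radcomp(t;\TarClassStar)\le t\radcrit/16$, hence $\EE[Z_\numobs(t)]\le \tfrac{\Lip}{4}\,t\radcrit$. Collecting terms produces a bound of the schematic form $Z_\numobs(t)\le \Lip t\big(c_1\radcrit + c_2\sqrt{\log(1/\pardelta)/\numobs}\big) + c_3\Lip\sigbound\log(1/\pardelta)/\numobs$.

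Finally, I would feed this fixed-radius estimate into~\Cref{lems:peeling} with $U=\max\{\radcrit,\|\tarhat-\tarstar\|_2\}$ and the obvious choice of $Q(t,s)$ matching the display above; peeling introduces the usual $\Logfun(\radcrit)$ factor inside the logarithm. Simplifying on the event $\|\tarhat-\tarstar\|_2\ge\radcrit$ (so that $\max\{\radcrit,\|\tarhat-\tarstar\|_2\}=\|\tarhat-\tarstar\|_2$) then gives the stated bound. The only genuinely delicate step is this last one---transferring the fixed-radius guarantee to the random radius $\|\tarhat-\tarstar\|_2$ without losing more than constants and the harmless $\Logfun$ factor---and, exactly as in the square-loss proofs, this is precisely what~\Cref{lems:peeling} is built to handle; the remainder is bookkeeping of constants, including bounding the boundedness constant crudely by something of order $\Lip\sigbound$ to match the statement.
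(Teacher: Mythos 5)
Your proposal is correct and follows essentially the same route as the paper's proof: reduce $\Term_1$ to the localized supremum of the (automatically centered) empirical process $(\PP-\PP_\numobs)(\Loss_{\tarplain,\auxstar}-\Loss_{\tarstar,\auxstar})$, bound it at a fixed radius via~\Cref{lems:emp-conc} together with the variance bound $\Lip^2 t^2$ and the mean bound $4\Lip\,\radcomp(t;\TarClassStar)\le \Lip t\radcrit/4$ from symmetrization, contraction, and the fixed-point definition of $\radcrit$, and then transfer to the random radius with~\Cref{lems:peeling} applied at $U=\max\{\radcrit,\|\tarhat-\tarstar\|_2\}$. The remaining differences are only in the bookkeeping of the boundedness constant in the last term, exactly as you note.
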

\noindent See~\Cref{sec:proof-lems-unif-cont-1} for the proof. \\

\noindent In addition, we claim that
\begin{align}
\label{EqnDunkin}
\Term_2 \leq 2 L \: \PP_\numobs|\auxhat - \auxstar|.
\end{align}
We return to prove this claim at the end of this section. \\

\medskip

With~\Cref{lems:unif-cont-2} and the bound~\eqref{EqnDunkin} in hand,
let us complete the proof of~\Cref{thm:main}(b).  Either we have
$\|\tarhat - \tarstar \|_2 \geq \radcrit$, or the bound
in~\Cref{lems:unif-cont-2} applies. Re-arranging this bound yields
\begin{align*}
\| \tarhat - \tarstar \|_2^2 - \Ltil \Big( \radcrit +
\sqrt{\frac{8\log(\Logfun(\radcrit)/\pardelta)}{\numobs}} \Big)
\|\tarhat - \tarstar \|_2 & \leq 2\Ltil \PP_{\numobs} |\auxhat -
\auxstar| + \frac{32\Ltil B
  \log(\Logfun(\radcrit)/\pardelta)}{\numobs},
\end{align*}
where we make use of the shorthand $\Ltil \defn
\frac{2L}{\gamma}$. Using the shorthand $b = \Ltil \Big( \radcrit +
\sqrt{\frac{8\log(\Logfun(\radcrit)/\pardelta)}{\numobs}} \Big)$, we
obtain
\begin{align*}
\Big(\| \tarhat - \tarstar \|_2 - \frac{b}{2}\Big)^2 \leq
\frac{b^2}{4} + 2\Ltil \PP_{\numobs} |\auxhat - \auxstar| +
\frac{32\Ltil B \log(\Logfun(\radcrit)/\pardelta)}{\numobs}.
\end{align*}
Since $\sqrt{u + v} \leq \sqrt{u} + \sqrt{v}$ for non-negative $u, v
\geq 0$, we conclude that
\begin{align*}
\|\tarhat - \tarstar \|_2 \leq \frac{2L}{\gamma} \, \radcrit +
\sqrt{\frac{8L}{\gamma} \cdot \PP_{\numobs} |\auxhat - \auxstar|} + 12
\sqrt{\frac{LB}{\gamma}} \cdot
\sqrt{\frac{\log(\Logfun(\radcrit)/\pardelta)}{\numobs}}.
\end{align*}
Thus, we have
\begin{align*}
\| \tarhat - \tarstar \|_2 \leq \max\Big\{ \radcrit,
\frac{2L}{\gamma} \, \radcrit + \sqrt{\frac{8L}{\gamma} \cdot
  \PP_{\numobs} |\auxhat - \auxstar|} + 12 \sqrt{\frac{LB}{\gamma}}
\cdot
\sqrt{\frac{\log(\Logfun(\radcrit)/\pardelta)}{\numobs}}\Big\},
\end{align*}
with probability at least $1 - \pardelta$. The stated form of the
claim follows from the fact that $\max \{u, v\} \leq u + v$ for
non-negative scalars $u$ and $v$.


\paragraph{Proof of~\Cref{EqnDunkin}:} Rearranging yields
\begin{align*}
\Term_2 = \PP_{\numobs} \loss_{\tarhat, \auxstar} - \PP_{\numobs}
\loss_{\tarhat, \auxhat} + \PP_{\numobs} \loss_{\tarstar, \auxhat} -
\PP_{\numobs} \loss_{\tarstar, \auxstar}.
\end{align*}
Using the Lipschitz property of $\Loss$, we have
\begin{align*}
\Big| \PP_{\numobs} \loss_{\tarhat, \auxstar} - \PP_{\numobs}
\loss_{\tarhat, \auxhat} \Big| & = \Big|
\frac{1}{\numobs}\sum_{i=1}^\numobs \Big\{
\loss(\widehat{\tarplain}(\Covariate_i), \auxstar(\Covariate_i,
\Surrogate_i)) - \loss(\widehat{\tarplain}(\Covariate_i),
\auxhat(\Covariate_i, \Surrogate_i)) \Big\} \Big| \\
& \leq \frac{1}{\numobs} \sum_{i=1}^\numobs \Big|
\loss(\widehat{\tarplain}(\Covariate_i), \auxstar(\Covariate_i,
\Surrogate_i)) - \loss(\widehat{\tarplain}(\Covariate_i),
\auxhat(\Covariate_i, \Surrogate_i)) \Big| \\
& \leq \frac{L}{\numobs} \sum_{i=1}^\numobs \Big|
\auxhat(\Covariate_i, \Surrogate_i) - \auxstar(\Covariate_i,
\Surrogate_i) \Big| = L\, \PP_{\numobs} |\auxhat - \auxstar|.
\end{align*}
By the same argument, we have $\Big| \PP_{\numobs} \loss_{\tarstar,
  \auxhat} - \PP_{\numobs} \loss_{\tarstar, \auxstar} \Big| \leq L
\, \PP_n |\auxhat - \auxstar|$, as claimed.


\subsection{Proof of~\Cref{lems:unif-cont-2}}
\label{sec:proof-lems-unif-cont-1}

For a given function $g$ and a radius $\myrad > 0$, define the random
variable
\begin{align*}
  \supRVn{\myrad} \defn \sup_{ \substack{ \tarplain \in \Tarclass
      \\ \|\tarplain - \tarstar \|_2 \leq \myrad}} \big|
  \big(\PP_{\numobs} - \PP \big) (\loss_{\tarplain, g} -
  \loss_{\tarstar, g}) \big|.
\end{align*}
By definition, we have $\Term_1 \leq \supRVn{\|\tarhat -
  \tarstar\|_2}$, so that it suffices to establish the following
intermediate result:
\begin{lemma}
\label{LemGenericUnifCont}
The  random variable $  \supRVn{\|\tarhat - \tarstar \|_2}$ is upper bounded by
\begin{align*}
L \radcrit \max\{\radcrit, \|\tarhat - \tarstar \|_2\} + L
\sqrt{\frac{8\log(\Logfun(\radcrit)/\pardelta)}{\numobs}} \cdot \max
\{ \radcrit, \|\tarhat - \tarstar \|_2\} + \frac{32L
  \log(\Logfun(\radcrit)/\pardelta)}{\numobs}
\end{align*}
with probability at least $1 - \pardelta$.
\end{lemma}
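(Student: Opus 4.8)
The argument follows the same two-stage template used in the earlier empirical-process lemmas (e.g.,~\Cref{LemTermThree2}): first control $\supRVn{\myrad}$ for each \emph{fixed} radius $\myrad \ge \radcrit$, and then transfer the resulting bound to the data-dependent radius $\|\tarhat - \tarstar\|_2$ via the peeling result~\Cref{lems:peeling}. Throughout, $g$ may be treated as a fixed (non-random) function; in fact the final bound does not depend on $g$ at all, which is why the lemma is stated in this generic form, and it is applied in~\Cref{lems:unif-cont-2} only with $g = \auxstar$.

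\textbf{Fixed-radius bound.} Fix $\myrad \ge \radcrit$ and consider the centered loss class
\[
\hclass(\myrad) \defn \bigl\{ (\covariate, \surrogate) \mapsto \loss_{\tarplain, g}(\covariate, \surrogate) - \loss_{\tarstar, g}(\covariate, \surrogate) \;:\; \tarplain \in \Tarclass,\ \|\tarplain - \tarstar\|_2 \le \myrad \bigr\}.
\]
The Lipschitz condition~\eqref{eqn:mreg-lip} in the first argument, together with the uniform boundedness of $\Tarclass$, shows that every element of $\hclass(\myrad)$ is bounded in sup-norm by $L\|\tarplain - \tarstar\|_\infty \le 2L$ and has variance at most $L^2\|\tarplain - \tarstar\|_2^2 \le L^2\myrad^2$. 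Applying the concentration inequality~\Cref{lems:emp-conc} to $\hclass(\myrad)$ then gives, with probability at least $1-\pardelta$,
\[
\supRVn{\myrad} \;\le\; 2\,\EE[\supRVn{\myrad}] + L\myrad\sqrt{\tfrac{2\log(1/\pardelta)}{\numobs}} + \tfrac{c\,L\log(1/\pardelta)}{\numobs}
\]
for a universal constant $c$.

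\textbf{Bounding the mean.} A standard symmetrization step bounds $\EE[\supRVn{\myrad}]$ by a constant multiple of the Rademacher complexity of $\hclass(\myrad)$; conditioning on the samples and applying the Ledoux--Talagrand contraction inequality to the $L$-Lipschitz maps $t \mapsto \loss\bigl(t, g(\Covariate_i, \Surrogate_i)\bigr)$ strips off the loss and leaves the localized Rademacher complexity $\radcomp(\myrad)$ of $\Tarclass$ about $\tarstar$, so that $\EE[\supRVn{\myrad}] \le c'\,L\,\radcomp(\myrad)$. Since $\myrad \mapsto \radcomp(\myrad)/\myrad$ is non-increasing and $\radcrit$ is the fixed point in~\eqref{EqnDefnRadcrit}, for $\myrad \ge \radcrit$ we have $\radcomp(\myrad) \le \myrad\,\radcomp(\radcrit)/\radcrit \le \myrad\radcrit/16$, and absorbing constants yields $\EE[\supRVn{\myrad}] \le L\myrad\radcrit$. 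Substituting back produces a fixed-radius bound of the form $\supRVn{\myrad} \le L\radcrit\myrad + L\myrad\sqrt{8\log(1/\pardelta)/\numobs} + c\,L\log(1/\pardelta)/\numobs$.

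\textbf{Peeling and conclusion.} This bound is valid only for a deterministic radius, whereas we need it at $\myrad = \|\tarhat - \tarstar\|_2$. Invoking~\Cref{lems:peeling} with the choice $Q(\myrad, s) \defn L\radcrit\myrad + L\myrad\sqrt{8s/\numobs} + c\,Ls/\numobs$ and $U \defn \max\{\radcrit, \|\tarhat - \tarstar\|_2\}$ upgrades $\log(1/\pardelta)$ to $\log(\Logfun(\radcrit)/\pardelta)$ at the cost of a bounded constant factor on the first two terms, which is exactly the stated bound (using also $\supRVn{\|\tarhat - \tarstar\|_2} \le \supRVn{U}$ by monotonicity in the radius). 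There is no single hard idea here --- it is the standard localized-empirical-process toolkit --- so I expect the main work to be bookkeeping: verifying that the centered loss class is uniformly bounded so that~\Cref{lems:emp-conc} applies (the one place where boundedness of $\Tarclass$ is genuinely used), and carefully tracking the constants through symmetrization and the contraction step so that the mean term collapses onto the \emph{same} critical radius $\radcrit$ that controls the oracle term --- this last point is what makes the overhead in~\Cref{thm:main} scale with $\radcrit$ rather than with some loss-specific complexity.
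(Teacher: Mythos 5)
Your proposal is correct and follows essentially the same route as the paper: a fixed-radius bound from the Talagrand-type concentration inequality (\Cref{lems:emp-conc}) with the variance controlled by the Lipschitz property and the mean controlled via symmetrization, Ledoux--Talagrand contraction, and the monotonicity of $\myrad \mapsto \radcomp(\myrad)/\myrad$, followed by peeling with $U = \max\{\radcrit, \|\tarhat - \tarstar\|_2\}$. The only (immaterial) quibble is that the lemma is in fact reused with $g = \auxhat$ and with the true response $\Response$ in the proof of \Cref{thm:main}(b), not only with $g = \auxstar$.
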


\noindent We remark that this statement also holds for the empirical
process
\begin{align*}
\supRVn{\myrad} \defn \sup_{ \substack{ \tarplain \in \Tarclass
    \\ \|\tarplain - \tarstar \|_2 \leq \myrad}} \Big|
\big(\PP_{\numobs} - \PP \big) (\loss_{\tarplain, \Response} -
\loss_{\tarstar, \Response}) \Big|,
\end{align*}
as the proof does not change if we replace $g(\Covariate, \Surrogate)$
with $\Response$.


\begin{proof}
With this notation, our first step is to apply~\Cref{lems:emp-conc} to
the function class
\begin{align*}
\hclass \defn \Big\{ \loss_{\tarplain, g} - \loss_{\tarstar,
  g} - \PP(\loss_{\tarplain, g} - \loss_{\tarstar,
  g}) \, \mid \, \tarplain \in \gclass, \enskip \| f - \tarstar
\|_2 \leq \myrad \Big\}.
\end{align*}
Setting $\tau = 1$ in our application of this lemma yields
\begin{align*}
\supRVn{\myrad} \geq 2 \EE[\supRVn{\myrad}] + \sqrt{\VarFun{\hclass}}
\sqrt{\frac{2 s}{\numobs}} + \frac{16L s}{\numobs} \qquad
\mbox{with probability at most $e^{-s}$.}
\end{align*}
We claim (and prove momentarily) that
\begin{align}
\label{eqn:controlling-uniform-terms}
\EE[\supRVn{\myrad}] \leq \tfrac{1}{4} L \, \myrad \, \radcrit, \qquad
  \text{and} \qquad \VarFun{\hclass} \leq \Lip^2 \myrad^2 \qquad
  \mbox{for all $\myrad \geq \radcrit$.}
\end{align}
When this bound holds, we have
\begin{align*}
\supRVn{\myrad} \geq Q(\myrad, s) \defn \frac{L \myrad \radcrit}{2} +
L \myrad \sqrt{\frac{2 s}{\numobs}} + \frac{16 L s}{\numobs} 
\end{align*}
with probability controlled by $e^{-s}$, as
desired. \Cref{lems:unif-cont-2} follows from
applying~\Cref{lems:peeling} to $\supRVn{\myrad}$ with $U = \max\{\radcrit, \|\tarhat - \tarstar\|_2\}$.

\paragraph{Proof of the bounds~\eqref{eqn:controlling-uniform-terms}: }

We begin by controlling the supremum $\VarFun{\hclass}$ of variances.
In particular, using the $\Lip$-Lipschitz property of the loss in its
second argument, we have
\begin{align*}
\VarFun{\hclass} &= \sup_{h \in \hclass} \VarFun{h} \leq
\sup_{\|\tarplain - \tarstar\|_2 \leq \myrad} \PP(\loss_{\tarplain, g}
- \loss_{\tarstar, g})^2 \leq L^2 \sup_{\| \tarplain - \tarstar\|_2
  \leq \myrad} \PP(\tarplain - \tarstar)^2 = L^2 \, \myrad^2,
\end{align*}
as claimed.

Shifting our focus to the expectation $\EE[\supRVn{\myrad}]$, we have
\begin{align*}
\EE[\supRVn{\myrad}] &= \EE \Big[ \sup_{\|\tarplain - \tarstar
    \|_2\leq \myrad} \big| \big(\PP_{\numobs} - \PP \big)
  (\loss_{\tarplain, g} - \loss_{\tarstar, g}) \big| \Big] \\
& \stackrel{(i)}{\leq} 2 \, \EE \Big[ \sup_{\|\tarplain - \tarstar
    \|_2 \leq \myrad} \big| \empavg \rade_i
  \big\{\loss(\tarplain(\Covariate_i),g(\Covariate_i, \Surrogate_i)) -
  \loss(\tarstar(\Covariate_i), g(\Covariate_i, \Surrogate_i))\big\}
  \big| \Big] \\
& \stackrel{(ii)}{\leq} 4 L \cdot \EE \Big[ \sup_{\|\tarplain -
    \tarstar \|_2 \leq \myrad} \big| \empavg \rade_i
  \big\{\tarplain(\Covariate_i) - \tarstar(\Covariate_i) \big\} \big|
  \Big] \\
& = 4 L \cdot \radcomp(\myrad; \TarClassStar)  \stackrel{(iii)}{\leq} \tfrac{1}{4} \, \Lip \, \myrad \: \radcrit.
\end{align*}
Here step (i) uses a standard symmetrization argument, and step (ii)
uses the Ledoux-Talagrand contraction inequality and the fact that
$\loss$ is $L$-Lipschitz.  (For further details, see the proof of
Theorem 4.10 and Equation (5.61), respectively, in the
book~\cite{dinosaur2019}). As for step (iii), since the function
$\myrad \mapsto \frac{\radcomp(\myrad; \TarClassStar)}{\myrad}$ is
non-increasing (see Lemma 13.6 in the book~\cite{dinosaur2019}),
for $\myrad \geq \radcrit$, we have \mbox{$\frac{\radcomp(\myrad;
    \TarClassStar)}{\myrad} \leq \frac{\radcomp(\radcrit;
    \TarClassStar)}{\radcrit} \leq \frac{\radcrit}{16}$,} using the
definition of $\radcrit$ in the final step.
\end{proof}


\section{Proof of~\Cref{thm:main}(b)}
\label{AppGLM}

In this section, we prove the improved bound~\eqref{eqn:faster-rate}
for GLM-type losses, as stated in part (b) of~\Cref{thm:main}.

\subsection{Main argument}

In addition to our previously introduced notation $\LossDiff_g \defn
\Loss_{\tarhat, g} - \Loss_{\tarstar, g}$, we also adopt the shorthand
$\LossDiff_\Response \defn \Loss_{\tarhat, \Response} -
\Loss_{\tarstar, \Response}$.  As in our previous analysis, we have
the upper bound $\frac{\scparam}{2} \|\tarhat - \tarstar\|_2^2 \leq
\PP \LossDiff_\auxstar$. Our first lemma decomposes the quantity $\PP
\LossDiff_\auxstar$ into a sum of three terms that we then control
individually.
\begin{lemma}
\label{LemGLMDecomp}
We have the upper bound $\PP \LossDiff_\auxstar \leq \Term_1 + \Term_2
+ \Term_3$, where
\begin{subequations}
\begin{align}
\Term_1 &\defn  \frac{\unumobs}{\numobs} \|\tarhat - \tarstar \|_2 \cdot \GLMbias, \\
\Term_2 &\defn \frac{\lnumobs}{\numobs} \big(\PP - \PempLab) \: \LossDiff_{\Response}, \qquad \text{and} \qquad \\
\Term_3 &\defn \frac{\unumobs}{\numobs} \big(\PP - \PempUnlab) \: \LossDiff_{\auxhat}. \
\end{align}
\end{subequations}
\end{lemma}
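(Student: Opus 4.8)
The plan is to derive the stated bound as a chain of exact algebraic identities, closed off by the single inequality that $\tarstar$ is feasible for the empirical objective defining $\tarhat$. I would first record the two structural inputs. \emph{Optimality:} since $\tarhat$ minimizes $\tarplain \mapsto \frac{1}{\numobs}\sum_{i=1}^\numobs \loss(\tarplain(\Covariate_i),\Ytil_i)$ over $\TarClass \ni \tarstar$, and since $\Ytil_i = \Response_i$ on $\LabelSet$ while $\Ytil_i = \auxhat(\Covariate_i,\Surrogate_i)$ on $\UnlabelSet$, we have
\begin{align*}
\tfrac{\lnumobs}{\numobs}\,\PempLab \LossDiff_{\Response} \;+\; \tfrac{\unumobs}{\numobs}\,\PempUnlab \LossDiff_{\auxhat} \;\le\; 0 .
\end{align*}
\emph{GLM structure:} for a loss of the GLM-type form $\loss(f(x),y) = -\phi(f(x))\,y + \Phi(f(x))$ with the standard choice $\auxstar(x,w) = \EE[\Response \mid \Covariate = x, \Surrogate = w]$, taking a conditional expectation over $(\Covariate,\Surrogate)$ shows that $\PP\loss_{f,\Response} = \PP\loss_{f,\auxstar}$ for every $f$, because $\EE[\auxstar(\Covariate,\Surrogate)\mid\Covariate] = \EE[\Response\mid\Covariate]$ by the tower property. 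In particular $\PP\LossDiff_{\Response} = \PP\LossDiff_{\auxstar}$.

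Then I would split $\PP\LossDiff_{\auxstar} = \tfrac{\lnumobs}{\numobs}\,\PP\LossDiff_{\auxstar} + \tfrac{\unumobs}{\numobs}\,\PP\LossDiff_{\auxstar}$ and process the two halves. For the labeled half, using the identity above I write $\PP\LossDiff_{\auxstar} = \PP\LossDiff_{\Response} = (\PP - \PempLab)\LossDiff_{\Response} + \PempLab\LossDiff_{\Response}$, which contributes exactly $\Term_2$ plus the empirical remainder $\tfrac{\lnumobs}{\numobs}\PempLab\LossDiff_{\Response}$. For the unlabeled half, the first move is to pass from $\auxstar$ to $\auxhat$: again by the GLM form, $\loss_{f,\auxstar} - \loss_{f,\auxhat} = \phi(f(\Covariate))\big(\auxhat(\Covariate,\Surrogate) - \auxstar(\Covariate,\Surrogate)\big)$, so that
\begin{align*}
\PP\LossDiff_{\auxstar} - \PP\LossDiff_{\auxhat} = \EE\!\big[(\phi\circ\tarhat - \phi\circ\tarstar)(\Covariate)\,\big(\htil - \hstar\big)(\Covariate)\big]
\end{align*}
after conditioning on $\Covariate$ and invoking the definitions of $\htil$ and $\hstar$; Cauchy--Schwarz bounds this by $\|\phi\circ\tarhat - \phi\circ\tarstar\|_2\,\GLMbias$, and the regularity of $\phi$ (the identity for canonical GLMs, or the assumed contraction $\|\phi\circ\tarhat - \phi\circ\tarstar\|_2 \le \|\tarhat - \tarstar\|_2$ for the extended family of~\Cref{AppExtended}) upgrades it to $\Term_1$ after multiplying by $\unumobs/\numobs$. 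Writing $\PP\LossDiff_{\auxhat} = (\PP - \PempUnlab)\LossDiff_{\auxhat} + \PempUnlab\LossDiff_{\auxhat}$ then contributes $\Term_3$ plus the empirical remainder $\tfrac{\unumobs}{\numobs}\PempUnlab\LossDiff_{\auxhat}$.

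Summing the two halves, the empirical remainders are exactly $\tfrac{\lnumobs}{\numobs}\PempLab\LossDiff_{\Response} + \tfrac{\unumobs}{\numobs}\PempUnlab\LossDiff_{\auxhat}$, which is $\le 0$ by the optimality inequality above and may be discarded, leaving $\PP\LossDiff_{\auxstar} \le \Term_1 + \Term_2 + \Term_3$. The step I expect to be the crux is the $\auxstar$-to-$\auxhat$ passage: it is exactly here that the GLM form is used in an essential way, and it is what makes the pseudo-response defect enter only through the $\Covariate$-smoothed gap $\htil - \hstar$ (hence the sharper guarantee relative to~\Cref{thm:main}(a), where the cruder term $\PP_\numobs|\auxhat - \auxstar|$ survives). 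A minor but necessary point is that, once we condition on the full sample, $\tarhat$ and $\auxhat$ (hence $\htil$) are deterministic functions, so the conditional-expectation manipulations above are legitimate; the independence of $\LabelSet$ and $\UnlabelSet$ is not needed for this lemma, only for the subsequent control of $\Term_2$ and $\Term_3$.
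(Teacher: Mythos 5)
Your proposal is correct and follows essentially the same route as the paper's proof: the identity $\PP\loss_{f,\Response}=\PP\loss_{f,\auxstar}$ via the tower property, the optimality inequality $\tfrac{\lnumobs}{\numobs}\PempLab\LossDiff_{\Response}+\tfrac{\unumobs}{\numobs}\PempUnlab\LossDiff_{\auxhat}\le 0$, and the GLM-specific cancellation $\PP(\LossDiff_{\auxstar}-\LossDiff_{\auxhat})=\EE[(\phi\circ\tarhat-\phi\circ\tarstar)(\htil-\hstar)]$ followed by Cauchy--Schwarz and the Lipschitz/contraction property of $\phi$. The only difference is cosmetic (you split $\PP\LossDiff_{\auxstar}$ into labeled and unlabeled halves up front rather than rearranging one chain), and your remark that independence of the two datasets is not needed here is accurate.
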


\noindent See~\Cref{sec:proof-LemGLMDecomp} for a proof of this
lemma. We now introduce a lemma to control these terms, beginning with
terms $\Term_2$ and $\Term_3$:
\begin{lemma}[Bounds on $\Term_2$ and $\Term_3$]
\label{LemGenUnifCont}
Conditional on $\|\tarhat - \tarstar \|_2 \geq \radcrit$,  we have
\begin{subequations}
\begin{align}
\Term_2 & \leq L\radcrit \| \tarhat - \tarstar \|_2 + L \|\tarhat -
\tarstar\|_2 \sqrt{\frac{8\log(\Logfun(\radcrit)/\pardelta)}{\numobs}}
+ \frac{32L \log(\Logfun(\radcrit)/\pardelta)}{\numobs}, \quad \mbox{and} \\
\Term_3 & \leq L\radcrit \| \tarhat - \tarstar \|_2 + L \|\tarhat -
\tarstar\|_2 \sqrt{\frac{8\log(\Logfun(\radcrit)/\pardelta)}{\numobs}}
+ \frac{32L \log(\Logfun(\radcrit)/\pardelta)}{\numobs},
\end{align}
\end{subequations}
each with probability at least $1 - \pardelta$.
\end{lemma}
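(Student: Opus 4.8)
The plan is to recognize that both estimates are instances of a single sub-sample empirical-process bound --- the one that, in the proof of~\Cref{thm:main}(a), is isolated as~\Cref{lems:unif-cont-2} and established via~\Cref{LemGenericUnifCont}. Concretely, I would re-run that argument on the relevant sub-sample ($\LabelSet$ for $\Term_2$, $\UnlabelSet$ for $\Term_3$) and then translate the resulting bound, which is naturally phrased in terms of the sub-sample critical radius and sub-sample size, into one involving $\radcrit$ and $\numobs$. Note that none of this uses the GLM structure: only the $\Lip$-Lipschitzness of $\loss$ in its first argument and the uniform boundedness of $\Fclass$ enter.

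For $\Term_2 = \tfrac{\lnumobs}{\numobs}(\PP - \PempLab)\LossDiff_\Response$, I would first bound $(\PP - \PempLab)(\loss_{\tarhat,\Response} - \loss_{\tarstar,\Response})$ by the supremum of the centered empirical process $f \mapsto (\PP - \PempLab)(\loss_{f,\Response} - \loss_{\tarstar,\Response})$ over $\{f \in \Tarclass : \|f - \tarstar\|_2 \le \|\tarhat - \tarstar\|_2\}$, which is legitimate since $\tarhat$ is feasible for this constraint. I would then run the same chain of steps that proves~\Cref{LemGenericUnifCont}: Talagrand concentration (\Cref{lems:emp-conc}) with $\tau = 1$; symmetrization; the Ledoux-Talagrand contraction inequality (valid by Lipschitzness in the first slot); the variance bound $\sup_{\|f - \tarstar\|_2 \le \myrad}\PP(\loss_{f,\Response} - \loss_{\tarstar,\Response})^2 \le \Lip^2 \myrad^2$; the non-increasing property of $\myrad \mapsto \radcomp(\myrad;\TarClassStar)/\myrad$; and the peeling device (\Cref{lems:peeling}) --- everywhere with $\numobs,\radcrit$ replaced by $\lnumobs,\lradcrit$, and, per the remark after~\Cref{LemGenericUnifCont}, with the fixed second argument $g$ replaced by $\Response$, which changes nothing since only first-slot Lipschitzness is used. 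For $\Term_3 = \tfrac{\unumobs}{\numobs}(\PP - \PempUnlab)\LossDiff_{\auxhat}$, the only additional preliminary is to condition on $\LabelSet$: since $\LabelSet \perp \UnlabelSet$, this freezes $\auxhat$, so that $f \mapsto (\PP - \PempUnlab)(\loss_{f,\auxhat} - \loss_{\tarstar,\auxhat})$ is a centered empirical process over the $\unumobs$ i.i.d.\ unlabeled samples, and the identical machinery applies with $\unumobs,\uradcrit$ and $g = \auxhat$.

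Each of these steps delivers, with probability at least $1 - \pardelta$, a bound of the form $\tfrac{\lnumobs}{\numobs}\bigl(\Lip\lradcrit M + \Lip M \sqrt{8\log(\Logfun(\lradcrit)/\pardelta)/\lnumobs} + 32\Lip\log(\Logfun(\lradcrit)/\pardelta)/\lnumobs\bigr)$ with $M = \max\{\lradcrit,\|\tarhat - \tarstar\|_2\}$ (and the $\unumobs,\uradcrit$ analogue for $\Term_3$). The last step converts this to the stated form using $\tfrac{\lnumobs}{\numobs} \le 1$, the monotonicity condition~\eqref{EqnRadeMonotone} (hence $\sqrt{\lnumobs}\,\lradcrit \le \sqrt{\numobs}\,\radcrit$ and, for the function classes of interest, $\lradcrit \ge \radcrit$ so that $\Logfun(\lradcrit) \le \Logfun(\radcrit)$), and the conditioning hypothesis $\|\tarhat - \tarstar\|_2 \ge \radcrit$: splitting on which argument attains $M$ gives $\tfrac{\lnumobs}{\numobs}\Lip\lradcrit M \le \Lip\radcrit\|\tarhat - \tarstar\|_2$ (if $M = \|\tarhat-\tarstar\|_2$ use $\tfrac{\lnumobs}{\numobs}\lradcrit \le \radcrit$; if $M = \lradcrit$ use $\tfrac{\lnumobs}{\numobs}\lradcrit^2 \le \radcrit^2 \le \radcrit\|\tarhat-\tarstar\|_2$), similarly for the $\sqrt{\log}$ term via $\sqrt{\lnumobs}/\numobs \le 1/\sqrt{\numobs}$, and the constant term becomes $32\Lip\log(\Logfun(\radcrit)/\pardelta)/\numobs$. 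This bookkeeping is precisely what is already carried out in the square-loss proofs of Lemmas~\ref{LemTermTwo} and~\ref{LemTermThree}, and it --- together with justifying the conditioning step for $\Term_3$ (freezing $\auxhat$ while the still-random radius $\|\tarhat - \tarstar\|_2$, which depends on $\UnlabelSet$ through the full-data ERM, remains absorbable by the peeling lemma) --- is where essentially all the (minor) care lies; the remainder is a transcription of the part~(a) argument.
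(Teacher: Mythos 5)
Your proposal is correct and follows essentially the same route as the paper: both terms are reduced to the generic sub-sample uniform-continuity bound of \Cref{LemGenericUnifCont} (with the remark allowing $\Response$ in the second slot), $\Term_3$ is handled by conditioning on $\LabelSet$ to freeze $\auxhat$ and exploiting independence of the two sub-samples, and the sub-sample quantities $\lradcrit, \uradcrit, \lnumobs, \unumobs$ are converted to $\radcrit, \numobs$ via the monotonicity condition~\eqref{EqnRadeMonotone} and the hypothesis $\|\tarhat - \tarstar\|_2 \geq \radcrit$. The bookkeeping you describe, including the prefactors $\lnumobs/\numobs$ and $\unumobs/\numobs$, matches what the paper carries out.
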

\noindent
See~\Cref{sec:proof-LemGenUnifCont} for a proof of this result. \\

Using the shorthand $b = L \big( \radcrit + \sqrt{\frac{8
    \log(\Logfun(\radcrit)/\pardelta)}{\numobs}} \big)$, we have
\begin{align*}
\Term_2 \leq b \cdot \|\tarhat - \tarstar \|_2 + \frac{32L \log(\Logfun(\radcrit)/\pardelta)}{\numobs}, \quad \mbox{and} \quad \Term_3 \leq b \cdot \|\tarhat - \tarstar \|_2 + \frac{32L \log(\Logfun(\radcrit)/\pardelta)}{\numobs}.
\end{align*}
Therefore, 
\begin{align*}
\frac{\scparam}{2} \|\tarhat - \tarstar \|_2^2 &\leq \PP\LossDiff_{\auxstar} \leq \Term_1 + \Term_2 + \Term_3 \\
&\leq \GLMbias \cdot \|\tarhat - \tarstar \|_2 + 2b \cdot \|\tarhat -\tarstar \|_2 + \frac{64L \log(\Logfun(\radcrit)/\pardelta)}{\numobs} \\
&=  \Big(2b + \GLMbias \Big) \cdot \|\tarhat - \tarstar \|_2 + \frac{64L
  \log(\Logfun(\radcrit) /\pardelta)}{\numobs},
\end{align*}
with probability at least $1 - 2\pardelta$. We then complete the
square, thereby obtaining
\begin{align*}
\Big( \|\tarhat - \tarstar \|_2^2 - \frac{1}{\scparam} \big(b + \GLMbias \big) \Big)^2 \leq \frac{1}{\scparam^2} \big(b +
\GLMbias \big)^2 + \frac{128 L
  \log(\Logfun(\radcrit)/\pardelta)}{\scparam \numobs}.
\end{align*}
Following some straightforward algebra and using the fact that
$\sqrt{u + v} \leq \sqrt{u} + \sqrt{v}$ for non-negative scalars $u$
and $v$, we conclude that
\begin{align*}
\|\tarhat - \tarstar \|_2 &\leq \frac{2}{\scparam} \big( b + \GLMbias \big) + 8 \sqrt{\frac{2L
    \log(\Logfun(\radcrit)/\pardelta)}{\scparam \numobs}} \\
& = \frac{2 L}{\scparam} \: \radcrit + \frac{2}{\scparam} \: \GLMbias + \Big(\tfrac{2L}{\scparam} + 8\sqrt{\tfrac{L}{\scparam}}
\Big) \sqrt{\frac{2\log(\Logfun(\radcrit)/\pardelta)}{\numobs}},
\end{align*}
as claimed.


\subsection{Proof of~\Cref{LemGLMDecomp}}
\label{sec:proof-LemGLMDecomp}

The key property of GLM-type losses that is used here is the fact that
$\PP \Loss_{f, \Response} = \PP \Loss_{f, \auxstar}$, which implies
$\PP\LossDiff_{\auxstar} = \PP \LossDiff_{\Response}$. Furthermore,
recall that for this setting we have $\ResponseTil_i =
\auxhat(\Covariate_i, \Surrogate_i)$ for $i \in \UnlabelSet$ and
$\ResponseTil_i = \Response_i$ for $i \in \LabelSet$. Thus we have
\begin{align*}
\PP \LossDiff_{\auxstar} = \frac{\lnumobs}{\numobs} \: \PP
\LossDiff_\Response + \frac{\unumobs}{\numobs} \: \PP
\LossDiff_{\auxstar}, \quad \text{and} \quad \PP_\numobs
\LossDiff_{\ResponseTil} = \frac{\lnumobs}{\numobs} \PP_\lnumobs
\LossDiff_\Response + \frac{\unumobs}{\numobs} \PempUnlab
\LossDiff_{\auxhat},
\end{align*}
where $\PempLab$ and $\PempUnlab$ denote the empirical measures over
$\LabelSet$ and $\UnlabelSet$, respectively. Therefore, we can write
\begin{align*}
  \frac{\lnumobs}{\numobs} \PP \LossDiff_\Response +
  \frac{\unumobs}{\numobs} \PP \LossDiff_{\auxstar} & =
  \frac{\unumobs}{\numobs} \big( \PP \LossDiff_{\auxstar} - \PP
  \LossDiff_{\auxhat} \big) + \frac{\lnumobs}{\numobs} (\PP -
  \PP_\lnumobs) \LossDiff_\Response + \frac{\unumobs}{\numobs} (\PP -
  \PP_\unumobs) \LossDiff_{\auxhat} + \PP_\numobs
  \LossDiff_{\ResponseTil} \\ &\leq \frac{\unumobs}{\numobs} \big( \PP
  \LossDiff_{\auxstar} - \PP \LossDiff_{\auxhat} \big) +
  \frac{\lnumobs}{\numobs} (\PP - \PP_\lnumobs) \LossDiff_\Response +
  \frac{\unumobs}{\numobs} (\PP - \PP_\unumobs) \LossDiff_{\auxhat}
  \\ &= \frac{\unumobs}{\numobs} \big( \PP \LossDiff_{\auxstar} - \PP
  \LossDiff_{\auxhat} \big) + \Term_2 + \Term_3.
\end{align*}

It remains to show $\PP \LossDiff_{\auxstar} - \PP \LossDiff_{\auxhat}
\leq \| \tarhat - \tarstar \|_2 \cdot \| \ftil - \tarstar
\|_2$. Recall that the GLM-type loss takes the form $\loss(\yhat, y) =
-\phi(\yhat) \response + \Phi(\yhat)$, so that, for any function $g$, we have
\begin{align*}
  \LossDiff_g & = \loss(\tarhat(\Covariate), g(\Covariate,
  \Surrogate)) - \loss(\tarstar(\Covariate), g(\Covariate,
  \Surrogate)) \\ & = -\big \{ \phi\big(\tarhat(\Covariate)\big) -
  \phi\big(\tarstar(\Covariate) \big) \big \} g(\Covariate,
  \Surrogate) + \Phi(\tarhat(\Covariate)) - \Phi(\tarstar(\Covariate))
\end{align*}
We apply this fact with $g = \auxstar$ and $g = \auxhat$ and then take
the difference, thereby obtaining
\begin{align*}
  \PP \big(\LossDiff_{\auxstar} - \LossDiff_{\auxhat} \big) & =
  \EE\Big[ \big \{ \phi\big(\tarhat(\Covariate)\big) -
    \phi\big(\tarstar(\Covariate) \big) \big \} \big \{
    \auxhat(\Covariate, \Surrogate) - \auxstar(\Covariate, \Surrogate)
    \big \} \Big] \\
& \stackrel{(i)}{=} \EE\Big[\big \{ \phi\big(\tarhat(\Covariate)\big)
    - \phi\big(\tarstar(\Covariate) \big) \big \} \big \{
    \EE[\auxhat\mid \Covariate] - \EE[\auxstar \mid \Covariate] \big
    \} \Big] \\
& \stackrel{(ii)}{\leq} \| \phi(\tarhat) - \phi(\tarstar) \|_2 \;
  \GLMbias \\ & \stackrel{(iii)}{\leq} \| \tarhat - \tarstar \|_2 \;
  \GLMbias,
\end{align*} 
where step (i) follows from the law of total expectation, step (ii)
follows from the Cauchy--Schwarz inequality, and step (iii) follows
from the fact that $\phi$ is $1$-Lipschitz.

\subsection{Proof of~\Cref{LemGenUnifCont}}
\label{sec:proof-LemGenUnifCont}

By~\Cref{LemGenericUnifCont}, we have
\begin{align*}
\Big| \big(\PP - \PP_\lnumobs) \LossDiff_{\Response} \Big| & \leq L
\lradcrit \: \max\{\lradcrit, \|\tarhat - \tarstar\|_2 \} + L
\sqrt{\tfrac{8\log(\Logfun(\lradcrit)/\pardelta)}{\lnumobs}} \: \max \{
\lradcrit, \|\tarhat - \tarstar\|_2\} + \tfrac{32L
  \log(\Logfun(\lradcrit)/\pardelta)}{\lnumobs},
\end{align*}
with probability at least $1 - \pardelta$. Furthermore, by
conditioning on $\auxhat$ (which is trained on $\LabelSet$) and using
the fact that $\LabelSet$ and $\UnlabelSet$ are independent, we have
by the same lemma
\begin{align*}
\Big| \big(\PP - \PempUnlab \big) \LossDiff_{\auxhat} \Big| &\leq L
\uradcrit \: \max\{\uradcrit, \|\tarhat - \tarstar\|_2 \} + L
\sqrt{\tfrac{8\log(\Logfun(\uradcrit)/\pardelta)}{\lnumobs}} \: \max
\{ \uradcrit, \|\tarhat - \tarstar\|_2\} + \tfrac{32L
  \log(\Logfun(\uradcrit)/\pardelta)}{\lnumobs}.
\end{align*}
By using the facts that $\sqrt{\lnumobs} \lradcrit \leq \sqrt{\numobs}
\radcrit$ and $\sqrt{\unumobs} \uradcrit \leq \sqrt{\numobs}
\radcrit$, we conclude that
\begin{align*}
  \max \{ \Big| \big(\PP - \PP_\lnumobs) \LossDiff_{\Response}
  \Big|, \Big| \big(\PP - \PempUnlab \big) \LossDiff_{\auxhat}
  \Big| \} & \leq L \max\{\radcrit, \|\tarhat - \tarstar\|_2 \} \Big
  \{ \radcrit +
  \sqrt{\tfrac{8\log(\Logfun(\radcrit)/\pardelta)}{\numobs}} \Big \} +
  \tfrac{32 L \log(\Logfun(\radcrit)/\pardelta)}{\numobs},
\end{align*}
The claim then follows from using the condition that $\|\tarhat -
\tarstar \|_2 \geq \radcrit$.


\section{Auxiliary lemmas}

In this section, we state some auxiliary results used throughout the
various proofs.~\Cref{AppEmpConc} is dedicated to the main
concentration result of empirical processes used
throughout. Then,~\Cref{AppLemPeeling} proves a peeling result over
empirical processes, a key ingredient in many of the arguments.

\subsection{Concentration for empirical processes}
\label{AppEmpConc}

Define the quantities $\| \PP_\tnumobs \|_\Gclass \defn \sup_{g \in
  \Gclass} \Big| \empavg g(X_i) \Big|$ and $\VarFun{\Gclass} \defn
\sup_{g \in \Gclass} \Big\{ \empavg \Var\big(g(X_i) \big) \Big\}$.
The following lemma gives an upper tail bound on $\| \PP_\numobs
\|_\Gclass$ in terms of its expectation and a deviation term:
\begin{lemma}[\cite{klein2005empproc}]
\label{lems:emp-conc}
Consider a countable, $\sigbound$-uniformly bounded function class
$\Gclass$ such that $\EE[g(X)] = 0$ for all $g \in \Gclass$.  Then for
any $\tau > 0$, we have
\begin{align*}
\| \PP_\tnumobs \|_\gclass \leq (1 + \tau) \EE \| \PP_\tnumobs
\|_\gclass + \sqrt{\VarFun{\Gclass}}
\sqrt{\tfrac{2\log(1/\pardelta)}{\tnumobs}} + \big(3 + \tfrac{1}{\tau}
\big) \tfrac{\sigbound \log(1/\pardelta)}{\tnumobs}
\end{align*}
with probability at least $1 - \pardelta$.
\end{lemma}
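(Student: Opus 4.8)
The plan is to obtain this statement as an essentially immediate consequence of a Talagrand-type concentration inequality for the supremum of a centered empirical process --- concretely the Bousquet/Klein--Rio refinement, which is what the cited reference~\cite{klein2005empproc} supplies. Before invoking it, I would make one bookkeeping reduction to remove the absolute value: replace $\Gclass$ by $\Gclass' \defn \Gclass \cup (-\Gclass)$. This class is still countable, $\sigbound$-uniformly bounded and centered, it satisfies $\VarFun{\Gclass'} = \VarFun{\Gclass}$, and it has $\sup_{g \in \Gclass'}\frac{1}{\tnumobs}\sum_{i=1}^{\tnumobs} g(X_i) = \|\PP_\tnumobs\|_{\Gclass}$ together with the analogous identity for the expectations. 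So it suffices to treat the one-sided supremum, and I would set $Z \defn \sup_{g \in \Gclass}\sum_{i=1}^{\tnumobs} g(X_i) = \tnumobs\,\|\PP_\tnumobs\|_{\Gclass}$.

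The main ingredient is then Bousquet's inequality: writing $v \defn \tnumobs\,\VarFun{\Gclass} + 2\sigbound\,\EE[Z]$ (note that $\tnumobs\,\VarFun{\Gclass}$ is precisely the sum-of-variances quantity entering Bousquet's bound, since the paper normalizes $\VarFun{\cdot}$ by $\tnumobs$), one has, for every $s > 0$, $Z \le \EE[Z] + \sqrt{2 v s} + \tfrac{\sigbound s}{3}$ with probability at least $1 - e^{-s}$. It is worth recording why this relatively heavy tool is needed: the bounded-differences inequality would only place the worst-case bound $\sigbound$ in the Gaussian term, whereas the peeling arguments elsewhere in the paper genuinely require the weak variance $\VarFun{\Gclass}$, which is typically far smaller.

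From here everything is elementary algebra. Take $s = \log(1/\pardelta)$. Using $\sqrt{a+b} \le \sqrt a + \sqrt b$, we get $\sqrt{2 v s} \le \sqrt{2\tnumobs\,\VarFun{\Gclass}\,s} + 2\sqrt{\sigbound\,\EE[Z]\,s}$, and then by the arithmetic--geometric mean inequality $2\sqrt{(\tau\EE[Z])(\sigbound s/\tau)} \le \tau\,\EE[Z] + \sigbound s/\tau$. Substituting back yields $Z \le (1+\tau)\EE[Z] + \sqrt{2\tnumobs\,\VarFun{\Gclass}\,s} + (\tfrac13 + \tfrac1\tau)\sigbound s$; dividing through by $\tnumobs$ and relaxing $\tfrac13 + \tfrac1\tau \le 3 + \tfrac1\tau$ gives exactly the claimed bound (the constant $3 + 1/\tau$ is deliberately loose).

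I do not anticipate a genuine obstacle, since the content is a known inequality that we cite; the only points to be careful about in the reduction are matching the variance normalization (sum versus average of variances) and checking that $\Gclass \cup (-\Gclass)$ inherits the countability, boundedness and centering hypotheses. If one instead wanted a self-contained proof, the real work --- establishing Bousquet's inequality itself via the entropy method (a modified logarithmic Sobolev inequality for the empirical process) or via Talagrand's convex-distance inequality --- would be the substantive step, and I would not attempt to reproduce it here.
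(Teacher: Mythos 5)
Your proposal is correct and matches the paper's treatment: the paper offers no proof of this lemma, simply citing the Klein--Rio/Bousquet concentration inequality, and your derivation is the standard way to obtain the stated $(1+\tau)$-form from it. The two reductions you flag---symmetrizing to $\Gclass \cup (-\Gclass)$ to absorb the absolute value, and matching the sum-versus-average variance normalization---are exactly the right bookkeeping, and the algebra ($\sqrt{a+b}\le\sqrt a+\sqrt b$ followed by AM--GM to trade $2\sqrt{\sigbound\,\EE[Z]\,s}$ for $\tau\,\EE[Z]+\sigbound s/\tau$) checks out, landing on the constant $\tfrac13+\tfrac1\tau\le 3+\tfrac1\tau$ as claimed.
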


\subsection{A peeling lemma}
\label{AppLemPeeling}

Given a function class $\Fclass$ with norm $\|\cdot\|$ and some
empirical process $\{V_\numobs(\genfun), \genfun \in \Fclass \}$,
consider a $r$-localized supremum of the form $\supRVn{r} \defn \sup_{
  \{f \in \Fclass \mid \|\genfun \| \leq r \}} V_{\numobs}(\genfun)$.
Suppose there is some scalar $s$ and function $Q: \real^2 \rightarrow
\real$ such that
\begin{align}
\label{EqnQbound}  
\Prob \big[ \supRVn{r} \geq Q(r, t) \big] & \leq e^{-t} \qquad \mbox{for each fixed
  $r \geq s$.}
\end{align}
In this section, we show how to prove that for a bounded random
variable $U \in [s, \fbound]$, we have $\supRVn{U} \lesssim Q(U, t)$
with high probability.  As is standard in empirical process
theory~\cite{vandeGeer,dinosaur2019}, we do so via a ``peeling''
argument. \\

\noindent More formally, we establish the following:
\begin{lemma}[Peeling]
  \label{lems:peeling}
Suppose that the tail bounds~\eqref{EqnQbound} hold for all $r \geq
s$, the bivariate function $Q$ is increasing in its first argument,
and $Q(2r, t) \leq 2 Q(r, t)$ for all $r \geq s$ and $t > 0$.  Then
for any random variable $U \in [s, \fbound]$, we have
\begin{align}
\label{EqnEricPeel}
\PP\big( \supRVn{U} \geq 2 Q(U, t) \big) \leq \lceil
\log_2(\tfrac{2\fbound}{s}) \rceil \cdot e^{-t}.
\end{align}
\end{lemma}

\begin{proof}
Define the event $\Bset_m \defn \{ 2^{m-1}s \leq U \leq 2^m s \}$;
since $U \in [s, \fbound]$ we have $\PP\big(\bigcup_{m=1}^M \Bset_m
\big) = 1$ for $M = \lceil\log_2 (\tfrac{2\fbound}{s})
\rceil$. Therefore, we have
\begin{align*}
\PP\big( \supRVn{U} \geq 2Q(U, t) \big) & \stackrel{(i)}{\leq}
\sum_{m=1}^M \PP\big(\{\supRVn{U} \geq 2Q(U, t) \} \cap \Bset_m \big)
\\ & \stackrel{(ii)}{\leq} \sum_{m=1}^M \PP \big( \{\supRVn{2^m s}
\geq 2Q(2^{m-1} s, t) \} \cap \Bset_m \big) \\ &\leq \sum_{m=1}^M \PP
\big( \supRVn{2^m s} \geq 2Q(2^{m-1} s, t) \big) \\ &
\stackrel{(iii)}{\leq} \sum_{m=1}^M \PP \big( \supRVn{2^m s} \geq
Q(2^{m} s, t) \big) \stackrel{(iv)}{\leq} M e^{-t}.
\end{align*}
Step (i) follows from a union bound, step (ii) follows from the
definition of $\Bset_m$, and steps (iii) and (iv) follows from the
definition of $Q(r,t)$.

\end{proof}

\end{document}